\documentclass[11pt]{amsart}

\usepackage[margin=1.12in]{geometry}
\usepackage{amsmath,amssymb,amsthm,mathtools}
\usepackage{graphicx}
\usepackage{enumitem}
\usepackage{microtype}
\usepackage[hidelinks]{hyperref}
\usepackage{cleveref}
\usepackage{tikz-cd}
\usetikzlibrary{arrows.meta,positioning,calc,decorations.pathmorphing}
\usepackage{xcolor}
\usepackage{booktabs}
\usepackage{array}

\definecolor{lineblue}{RGB}{20,82,180}
\definecolor{linegreen}{RGB}{0,132,92}
\definecolor{quartic}{RGB}{185,55,115}
\definecolor{linered}{RGB}{205,45,55}
\definecolor{linepink}{RGB}{225,90,155}
\definecolor{lineorange}{RGB}{225,120,25}
\definecolor{linepurple}{RGB}{115,70,170}
\definecolor{lineteal}{RGB}{0,145,155}

\newtheorem{theorem}{Theorem}[section]
\newtheorem{proposition}[theorem]{Proposition}
\newtheorem{lemma}[theorem]{Lemma}

\theoremstyle{definition}

\theoremstyle{remark}
\newtheorem{remark}[theorem]{Remark}

\newcommand{\Mod}{\operatorname{Mod}}

\newcommand{\e}{\mathrm{e}}
\newcommand{\sig}{\sigma}
\newcommand{\ch}{\chi_h}

\title[Xiao's degree-four fibration]{Xiao's Degree-Four Fibration and the Polizzi Model}
\author{Anar Akhmedov}
\thanks{A. Akhmedov is also a Visiting Scholar in the Department of Mathematics at Harvard University, One Oxford Street, Cambridge, MA 02138, USA}
\address{School of Mathematics, University of Minnesota, Minneapolis, MN 55455, USA; Visiting Scholar, Department of Mathematics, Harvard University, One Oxford Street, Cambridge, MA 02138, USA}
\email{akhmedov@umn.edu}
\email{akhmedov@math.harvard.edu}
\author{Sumeyra Sakalli}
\address{Department of Mathematics and Statistics, University of South Florida, Tampa, FL 33620, USA}
\email{sumeyras@usf.edu}
\date{October 2026}
\dedicatory{Dedicated to the memory of Professor Gang Xiao}

\hypersetup{
  pdftitle={Xiao's Degree-Four Fibration and the Polizzi Model},
  pdfauthor={Anar Akhmedov and Sumeyra Sakalli}
}

\subjclass[2020]{Primary 14J29, 14D06; Secondary 57R17, 57R57, 20F38}
\keywords{genus-two Lefschetz fibration, surface of general type, vanishing lattice, Lagrangian torus, Picard--Lefschetz monodromy}

\begin{document}
\raggedbottom
\begin{abstract}
Let $S_E$ be the surface in Xiao's degree-four family identified by Polizzi
as a smooth divisor in $E(3)$.  Starting from Polizzi's branch model and
Xiao's classification of the thirteen singular fibers, we derive the local
spherical braids and their Picard--Lefschetz lifts, including the colored
$3+3$ partitions at the seven reducible fibers.  We identify the
normalization of each bisection with $E/\{\pm1\}$ and its degree-two ruling
map with the quotient by the involution induced by translation by a nonzero
two-torsion point.  We prove $\pi_1(S_E)\cong\mathbb Z^2$, compute the
elementary-divisor-$4$ Albanese kernel of a regular genus-two fiber, and show
that the six nonseparating Picard--Lefschetz transformations represent the
six cusps of $\Gamma(4)$.  We also prove that the natural product tori near
an elliptic section are nullhomologous and cannot lower $b_1$ below $2$ by
torus surgery.  As a separate application of the degree-three monodromy, we
give a twisted-double construction of an exotic
$\mathbb CP^2\#7\overline{\mathbb CP}^{\,2}$.
\end{abstract}

\maketitle

\section{Introduction}
\enlargethispage{4pt}
Catanese and Ciliberto constructed the moduli space of surfaces with
$p_g=q=1$, $K^2=3$, and Albanese genus three, realizing their canonical
models as divisors in $E(3)$ \cite{CataneseCiliberto}.  Xiao constructed the
one-dimensional subfamily carrying a rational genus-two fibration
\cite{Xiao}.  Polizzi subsequently identified this subfamily inside the
Catanese--Ciliberto model and developed its detailed double-cover and
branch-divisor geometry \cite{Polizzi}.

Akhmedov and Monden first developed Xiao's degree-three fibration as a
symplectic four-manifold building block and used it in constructions of
small exotic manifolds \cite{AkhmedovMonden}.  Akhmedov subsequently
computed its ordered branch-cover monodromy \cite{AkhmedovXiao}.  The
present paper concerns principally the degree-four Polizzi model; the final
section records a twisted-double application of the degree-three
factorization.  Independently, Szab\'o has determined marked vanishing
cycles and positive factorizations for Xiao's fibrations in degrees $3$,
$4$, and $5$ by a triangular-tiling and local-hexagon method
\cite{SzaboXiao}.  Our degree-four analysis instead uses Polizzi's ruled
surface branch divisor.  It identifies the local branch degenerations and
their Picard--Lefschetz lifts and relates them to the Albanese lattice; it
does not use Szab\'o's factorization.

Let $S_E$ denote the member corresponding to the elliptic curve $E$.
Polizzi's construction gives a smooth minimal surface of general type
\cite[Theorem~7.1]{Polizzi} with
\[
p_g(S_E)=q(S_E)=1,\qquad K_{S_E}^2=3.
\]
After the standard translation of $E(3)$, Polizzi identifies the relevant
member with the smooth divisor
\[
S_E=X_E\in|4D_0-F_0|\subset E(3),
\]
where $E(3)\to E$ is the third symmetric product of the elliptic curve $E$.
The surface carries two distinct fibrations: the genus-three Albanese
fibration $S_E\to E$ and Xiao's genus-two fibration $S_E\to\mathbb P^1$.
Xiao's classification, recalled in \cite[Proposition~6.1]{Polizzi}, gives
six irreducible one-nodal fibers of arithmetic genus two and seven reducible
fibers whose components are smooth elliptic curves meeting transversely
once.  A regular genus-two fiber meets a regular Albanese fiber in four
points \cite[Proposition~6.18]{Polizzi}.

The motivating problem is to construct an exotic
$\mathbb CP^2\#6\overline{\mathbb CP}^{\,2}$ from Polizzi's surface.  We
analyze the monodromy, fundamental group, Albanese-kernel lattice, and the
most immediate Lagrangian-torus surgery candidates.  The latter candidates
are obstructed, and no exotic manifold is obtained directly from $S_E$.

Our main results concern the fibration on $S_E$ itself.  A final,
independent section treats a twisted double of the type-$(4,3)$ Xiao
fibration.  The
topological Lefschetz hyperplane theorem applied to the ample divisor
$X_E\subset E(3)$ gives
\[
\pi_1(S_E)\cong\mathbb Z^2.
\]
If $\Sigma$ is a regular genus-two fiber and
$h=\alpha|_\Sigma:\Sigma\to E$, then
\[
K_0=\ker\bigl(h_*:H_1(\Sigma;\mathbb Z)\to H_1(E;\mathbb Z)\bigr)
\]
is a primitive rank-two lattice with alternating form $4J$.  The six
nonseparating Picard--Lefschetz transformations represent the six distinct
cusps of $\Gamma(4)$.  Consequently no two nonseparating critical points
form a matching pair; see \cref{thm:pi1-polizzi,thm:polizzi-vanishing-lattice}.

Section~\ref{sec:F2} derives the local twist types from Polizzi's branch
divisor together with Xiao's singular-fiber classification as recalled in
\cite[Proposition~6.1]{Polizzi}.  The later discussion of Lagrangian product
tori proves that the natural nullhomologous tori near an elliptic section
cannot kill the two Albanese generators by torus surgery; see
\cref{thm:nullhomologous-surgery}.

The $E(3)$ framework is due to Catanese--Ciliberto, the
genus-two family and its $6+7$ fiber classification are due to Xiao, and
Polizzi identifies the family inside that framework and supplies the
explicit reducible components, intersection data, and $\mathbb F_2$ branch
model.  Akhmedov and Monden's earlier work \cite{AkhmedovMonden} provided a
four-manifold treatment of Xiao's fibration; the present paper builds
directly on that perspective rather than introducing Xiao's fibration into
the subject for the first time.  The contributions of this
paper are a branch-theoretic description
of the local type-$(6,7)$ monodromy, an explicit description of the three
normalized bisection covers, the topological computation of
$\pi_1(S_E)$, the elementary-divisor-$4$
Albanese-kernel and six-cusp theorem, and the homological obstruction to the
local nullhomologous-torus surgery strategy.  We also prove simple
connectivity of a particular twisted double of the type-$(4,3)$ block.  No
exotic six-blowup manifold is claimed in this paper; the direct Polizzi and
lantern constructions remain conditional.

\section{Genus-two monodromy and numerical formulas}

Let $\Sigma_2$ be a closed oriented surface of genus two.  Write $t_c$ for the positive Dehn twist about a simple closed curve $c\subset\Sigma_2$.  A genus-two Lefschetz fibration over $S^2$ determines a positive factorization
\[
t_{c_1}\cdots t_{c_N}=1\qquad\text{in }\Mod(\Sigma_2).
\]
We call its \emph{type} $(n,s)$ if exactly $n$ vanishing cycles are nonseparating and $s$ are separating.

\begin{proposition}\label{prop:numerics}
For a genus-two Lefschetz fibration $f:X\to S^2$ of type $(n,s)$,
\[
\e(X)=n+s-4,
\qquad
\sig(X)=-\frac35n-\frac15s.
\]
Consequently
\[
\ch(X)=\frac{\e(X)+\sig(X)}4,
\qquad
c_1^2(X)=2\e(X)+3\sig(X).
\]
\end{proposition}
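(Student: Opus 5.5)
The Euler characteristic comes from the standard cell count for a Lefschetz fibration, and the signature from a local signature formula for genus two. For the Euler characteristic, remove small disjoint disks around the $N=n+s$ critical values of $f$. Over the complement, a sphere with $N$ punctures, $f$ is a genuine $\Sigma_2$-bundle, so its Euler characteristic is $\e(S^2)-N$ times $\e(\Sigma_2)=-2$. Each singular fiber is obtained from a regular fiber by collapsing one vanishing circle to a point, which raises the Euler characteristic by one. Additivity over the decomposition then gives
\[
\e(X)=\e(S^2)\,\e(\Sigma_2)+N=2\cdot(-2)+n+s=n+s-4 .
\]

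For the signature, I would invoke the Matsumoto--Endo local signature formula for hyperelliptic (in particular genus-two) Lefschetz fibrations over $S^2$. In genus two, Meyer's signature cocycle on $\Mod(\Sigma_2)$ is cohomologous to a rational coboundary, because $H^2(\Mod(\Sigma_2);\mathbb Q)=0$ and $H_1(\Mod(\Sigma_2);\mathbb Z)\cong\mathbb Z/10$. This yields Matsumoto's local contributions: $-\tfrac35$ for each nonseparating Lefschetz singularity and $-\tfrac15$ for each separating one. Summing over the factorization $t_{c_1}\cdots t_{c_N}=1$ gives $\sig(X)=-\tfrac35 n-\tfrac15 s$.

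As a consistency check, the abelianization relation forces $n+2s\equiv 0 \pmod{10}$, since a nonseparating twist maps to $1$ and a separating twist to $2$ in $\mathbb Z/10$. This guarantees that $-\tfrac35n-\tfrac15s=-\tfrac{3n+s}{5}$ is an integer, because $3n+s\equiv -5s+s+\dots$. More precisely, $3n+s=3(n+2s)-5s$, which is divisible by $5$. This is the one genuinely nontrivial input, and I would cite it rather than reprove it. Reproving it would require either Meyer's cocycle computation or a comparison with standard examples, for instance the fibrations with $(n,s)=(40,0)$ and $(20,0)$ together with a blow-up-type fibration, to pin down the two coefficients.

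The remaining identities are the standard definitions for a closed (symplectic, or complex) four-manifold. One takes $\ch=(\e+\sig)/4$, which is Noether's formula $\chi(\mathcal O)=(c_1^2+c_2)/12$ rewritten using $c_2=\e$ and $c_1^2=2\e+3\sig$. The latter is the Hirzebruch/Wu relation $c_1^2=p_1+2c_2$ combined with $p_1=3\sig$. These I would simply record as definitions of $\ch$ and $c_1^2$ in terms of $\e$ and $\sig$. The main obstacle is therefore entirely in the local signature step.
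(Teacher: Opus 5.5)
Your proposal is correct and follows essentially the same route as the paper: the Euler characteristic from the standard count $\e(S^2)\e(\Sigma_2)+N$, the signature from the Matsumoto--Endo local signature formula (the paper cites Endo's hyperelliptic formula, noting that every genus-two mapping class is hyperelliptic), and the last two identities as the standard almost-complex relations. Your $\mathbb Z/10$ integrality check is correct but not needed for the proof, and the stray fragment ``$3n+s\equiv -5s+s+\dots$'' should be deleted in favor of the identity $3n+s=3(n+2s)-5s$ that you give next.
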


\begin{proof}
The Euler characteristic formula is the standard Lefschetz-fibration formula
$\e(X)=\e(S^2)\e(\Sigma_2)+N=-4+N$.  Since every genus-two mapping class is hyperelliptic, the signature formula is Endo's formula.  The remaining identities are the standard almost-complex relations.
\end{proof}

\section{Polizzi's branch model for Xiao's pencil}\label{sec:F2}

We now extract from Polizzi's branch model the geometric information about the thirteen singular fibers that will be used later.  We first identify the six moving branch points on a regular quotient fiber, then determine the local braids at every critical value, and finally formulate the resulting global geometric product relation.  The outcome is the geometric type $(6,7)$ of the fibration and the local Picard--Lefschetz classes.  No explicit Artin word is needed for the purposes of this paper.

\subsection{Polizzi's canonical double cover}

Let $S$ be a surface in Polizzi's locus $\mathcal M_2$, and write its seven reducible genus-two fibers as
\[
G_i=W_i+Z_i,\qquad P_i=W_i\cap Z_i,\qquad 1\le i\le7,
\]
where $W_i$ and $Z_i$ are smooth elliptic curves meeting transversely at one point.  Blow up the seven points $P_i$ and write
\[
\pi:\widehat S\longrightarrow S
\]
for the resulting surface, with exceptional curves $E_i$.  Put $K=K_S$ and
let $G$ denote the class of a fiber of the genus-two pencil.  Polizzi proves
that the movable transform of $|K_S+3G|$ on $\widehat S$ is base-point-free
and induces a generically finite morphism of degree two
\cite[Section~9, Proposition~9.2]{Polizzi}
\[
\widehat f_3:\widehat S\longrightarrow\mathbb F_2
\]
which sends the genus-two pencil to the ruling
\[
p:\mathbb F_2\longrightarrow\mathbb P^1.
\]
Write its Stein factorization as
\[
\widehat S\xrightarrow{c}\overline S\xrightarrow{q_B}\mathbb F_2.
\]
Here $c$ is the canonical resolution and $q_B$ is the finite normal double
cover.  Thus the branch divisor below belongs to $q_B$; the generically
two-to-one map $\widehat f_3=q_B\circ c$ may contract exceptional curves and
is not itself being asserted to be finite.
Let $C_0$ be the negative section, $C_0^2=-2$, and let $L$ denote the class of a ruling fiber.  The branch divisor is
\begin{equation}\label{eq:Polizzi-branch}
B=B^\sharp+\sum_{i=1}^7L_i,
\end{equation}
where the $L_i$ are seven distinct ruling fibers and
\[
B^\sharp\sim 6C_0+15L.
\]
More precisely,
\begin{equation}\label{eq:Polizzi-three-bisections}
B^\sharp=B_1^\sharp+B_2^\sharp+B_3^\sharp,
\qquad
B_j^\sharp\sim2C_0+5L.
\end{equation}
Polizzi's Proposition~9.2 and its multiplicity table give the following
more precise information.  Each $B_j^\sharp$ is irreducible of geometric
genus zero, is linearly equivalent to $2C_0+5L$, and has exactly two
ordinary double points.  The divisor $B^\sharp$ has precisely fourteen
ordinary triple points
\[
x_i,y_i\in L_i,\qquad 1\le i\le7,
\]
two on each distinguished ruling fiber, and no other singularities.  The corresponding singularities of the total branch divisor $B=B^\sharp+\sum L_i$ are ordinary quadruple points, meaning four smooth local branches with distinct tangent directions.  At an $x_{ab}$-type triple point, two local branches come from the node of one bisection and the third from another bisection; at each $m_k,n_k$, the three local branches come from the three distinct bisections.  These statements, including the branch incidence read from the multiplicity table in Proposition~9.2, are input from Polizzi's algebraic construction, not consequences of the monodromy calculation below.  In particular, the ordinary-quadruple local model implies that $L_i$ is transverse to each of the three local branches of $B^\sharp$ at $x_i$ and at $y_i$.

\begin{remark}\label{rem:imported-polizzi}
The divisor class, reducible decomposition, fourteen triple points, and incidence with the seven fibers are quoted from Polizzi's Section~9 (especially Proposition~9.2).  Our contribution in this section begins with the passage from those data to the local spherical braids and their geometric Picard--Lefschetz lifts.  Here the argument does not reconstruct Polizzi's branch curve from scratch.
\end{remark}

\begin{proposition}\label{prop:polizzi-branch}
In the sense of Polizzi's Proposition~9.2, the finite morphism
$q_B:\overline S\to\mathbb F_2$ in the Stein factorization is the normal
double cover branched along \eqref{eq:Polizzi-branch}, and
$c:\widehat S\to\overline S$ is its canonical resolution.  The genus-two
pencil is obtained from the ruling of $\mathbb F_2$, and the seven fibers
$L_i$ contained in the branch locus correspond to the seven reducible
fibers of the pencil.  The fiberwise hyperelliptic involution is induced by
the covering involution.
\end{proposition}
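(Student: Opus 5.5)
The plan is to derive each assertion of the proposition from the Stein factorization of $\widehat f_3$ together with Polizzi's Proposition~9.2, checking compatibility with the standard theory of double covers and canonical resolutions.

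First, since $\widehat f_3$ is generically finite of degree two, its Stein factorization $\widehat S\xrightarrow{c}\overline S\xrightarrow{q_B}\mathbb F_2$ has $c$ birational with connected fibers and $q_B$ finite of degree two. $\overline S$ is normal, so $q_B$ is a normal double cover of the smooth surface $\mathbb F_2$. Such a cover is determined by its branch divisor $B$ and a square root $\mathcal L$ with $B\sim 2\mathcal L$: concretely, $\overline S=\operatorname{Spec}(\mathcal O\oplus\mathcal L^{-1})$. Polizzi's computation identifies $B$ with \eqref{eq:Polizzi-branch}. As a consistency check, $B\sim 6C_0+22L$ is even, so $\mathcal L\sim 3C_0+11L$.

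Next, I would check that $c$ is the canonical resolution. The singularities of $B$ are fourteen ordinary quadruple points together with the nodes of the $B_j^\sharp$. I take the nodes to be the $m_k,n_k$-type and $x_{ab}$-type points counted among the triple points and quadruple points, as in Polizzi's multiplicity table. An ordinary quadruple point of $B$ produces, after one blow-up, an exceptional curve over which the pulled-back branch divisor has even multiplicity $4$. The reduced branch divisor then becomes smooth there, and the double cover over the exceptional curve is an elliptic curve, in agreement with the elliptic components.

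The main point is to match this with the seven points $P_i$. Over $L_i$, which lies in the branch locus, the preimage is a nonreduced double curve. After canonical resolution it separates into $W_i$ and $Z_i$, double covers of the strict transforms of the exceptional curves over $x_i$ and $y_i$ branched at four points each. The strict transform of $L_i$, with self-intersection $-2$ after the two blow-ups, pulls back to the curve $E_i$, which is $(-1)$ upstairs. Contracting $E_i$ recovers $S$. I would verify this by computing self-intersections: $(W_i)^2$ on $\widehat S$ equals $-2$ when computed from the halved pullback $\tfrac12\cdot(-4)$ with the appropriate bookkeeping. This step, keeping track of the multiplicities and reconciling them with $W_i^2=Z_i^2=-1$ on $S$, is the one I expect to be delicate. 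I would cross-check it against Polizzi's table rather than rederive it.

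Finally, the ruling fibers map to genus-two fibers. A general fiber $L$ meets $B$ in $B\cdot L=6$ points, so its double cover has genus $2$. The composition $p\circ\widehat f_3$ is therefore the pencil $|G|$ on $\widehat S$ and descends to $S$. The distinguished fibers $L_i$ give the reducible fibers $W_i+Z_i$ by the previous step. The deck involution of $q_B$ lifts to $\widehat S$ because canonical resolution is equivariant. On a general fiber it is an involution of a genus-two curve with quotient $\mathbb P^1$, so by uniqueness it is the hyperelliptic involution.
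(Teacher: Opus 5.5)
Your route differs from the paper's because the paper gives no argument here. The proposition is imported from Polizzi's Proposition~9.2 (Remark~\ref{rem:imported-polizzi}), just as you import the identification of the branch divisor. Your reconstruction of the canonical resolution adds real content, but several verification steps are wrong or incomplete.

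First, the nodes of the bisections are not at $m_k,n_k$. There the three branches belong to three distinct bisections. The six nodes (two on each $B_j^\sharp$, since $p_a(2C_0+5L)=2$) are exactly the six $x_{ab}$ points, each a node of one bisection crossed by a smooth branch of another. This gives $B_i^\sharp\cdot B_j^\sharp=12=8+2+2$.

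Second, the step you call delicate is routine, and the ``halved pullback $\tfrac12\cdot(-4)$'' is not the right bookkeeping. Let $\sigma:\widetilde{\mathbb F}_2\to\mathbb F_2$ blow up the fourteen points, with exceptional curves $\mathcal E_k$ (those over $x_i,y_i$ written $\mathcal E_{x_i},\mathcal E_{y_i}$), and let $\psi$ be the double cover of $\widetilde{\mathbb F}_2$. Each $\mathcal E_{x_i}$ has even multiplicity $4$ in $\sigma^*B$, so it is not in the new branch locus. Hence $W_i=\psi^*\mathcal E_{x_i}$ and $W_i^2=2\mathcal E_{x_i}^2=-2$, with no halving. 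Halving applies only to the branch component $\widetilde L_i$: $\psi^*\widetilde L_i=2E_i$ gives $E_i^2=-1$. Since $\widetilde L_i$ meets $\mathcal E_{x_i}$ and $\mathcal E_{y_i}$ once each at branch points, $E_i\cdot W_i=E_i\cdot Z_i=1$ and $W_i\cdot Z_i=0$. So the fiber over $\lambda_i$ is $W_i+Z_i+2E_i$, not merely a splitting into $W_i$ and $Z_i$. Contracting $E_i$ gives $W_i^2=Z_i^2=-1$ and $W_i\cdot Z_i=1$.

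More substantively, you announce but never carry out the comparison of the Stein-factorization map $c$ with the canonical resolution $\widetilde X\to\overline S$; ``contracting $E_i$ recovers $S$'' is only asserted. A short argument closes this. The exceptional locus of $\widetilde X\to\overline S$ consists of the fourteen elliptic $(-2)$-curves, with no $(-1)$-curves, so $\widetilde X$ is the minimal resolution of $\overline S$. Hence $c$ factors through a birational morphism $\widehat S\to\widetilde X$. Take $\mathcal L'=\sigma^*\mathcal L-2\sum_k\mathcal E_k$, and use $K_{\mathbb F_2}+\mathcal L\sim C_0+7L$. Then $K_{\widetilde X}^2=2\bigl(\sigma^*(C_0+7L)-\sum_k\mathcal E_k\bigr)^2=2(12-14)=-4=K_S^2-7=K_{\widehat S}^2$. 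Each point blow-up lowers $K^2$ by one, so that morphism is an isomorphism.

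Finally, ``is therefore the pencil $|G|$'' does not follow from the genus count. That $\widehat f_3$ carries $|G|$ to the ruling is part of Polizzi's input, and you should cite it rather than deduce it. Your identification of the involution via uniqueness of the $g^1_2$ is fine.
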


\begin{figure}[ht]
\centering
\begin{tikzpicture}[x=.92cm,y=.92cm,every node/.style={font=\scriptsize}]
  \draw[rounded corners,fill=black!2] (0,0) rectangle (10,4.15);
  \draw[gray!70,thick] (.35,.38) .. controls (3,.52) and (7,.25) .. (9.65,.42)
    node[right,text=gray!80] {$C_0$};
  \foreach \x/\c/\n in {.75/lineteal/{1},2.10/linepink/{2},3.45/linepurple/{3},4.80/lineteal/{1}^{*},6.15/linepink/{2}^{*},7.50/linepurple/{3}^{*},8.85/lineteal/{4}^{*}}{
    \draw[\c,very thick] (\x,.72)--(\x,3.55);
    \node[text=\c,above] at (\x,3.55) {$L_{\n}$};
  }
  \draw[linered,very thick]
    (.30,1.10) .. controls (1.8,2.55) and (3.3,2.75) .. (5.05,1.72)
    .. controls (6.8,.70) and (8.3,1.20) .. (9.70,2.66);
  \draw[linepink,very thick,dashed]
    (.30,2.95) .. controls (1.75,1.55) and (3.4,1.20) .. (5.05,2.22)
    .. controls (6.7,3.22) and (8.25,3.05) .. (9.70,1.35);
  \draw[lineblue,very thick]
    (.30,1.72) .. controls (2.0,.82) and (3.55,1.02) .. (5.15,2.72)
    .. controls (6.55,3.75) and (8.2,2.80) .. (9.70,1.92);
  \draw[lineteal,very thick,dashed]
    (.30,2.42) .. controls (1.9,3.35) and (3.55,3.20) .. (5.15,1.20)
    .. controls (6.7,.55) and (8.2,1.05) .. (9.70,2.42);
  \draw[linegreen,very thick]
    (.30,1.36) .. controls (1.65,1.10) and (3.25,3.35) .. (4.95,3.02)
    .. controls (6.75,2.65) and (8.15,.72) .. (9.70,1.12);
  \draw[linepurple,very thick,dashed]
    (.30,2.70) .. controls (1.7,3.02) and (3.2,.72) .. (4.95,1.02)
    .. controls (6.7,1.35) and (8.15,3.50) .. (9.70,3.08);
  \foreach \x in {.75,2.10,3.45,4.80,6.15,7.50,8.85}{
    \filldraw[fill=white,draw=black!40] (\x,1.35) circle (.24);
    \filldraw[fill=white,draw=black!40] (\x,2.75) circle (.24);
  }
  \fill[linered] (.67,2.75) circle (1.5pt); \fill[lineblue] (.81,2.84) circle (1.5pt); \fill[lineteal] (.81,2.66) circle (1.5pt);
  \fill[linepink] (.67,1.35) circle (1.5pt); \fill[linegreen] (.81,1.44) circle (1.5pt); \fill[linepurple] (.81,1.26) circle (1.5pt);
  \fill[linered] (2.02,2.84) circle (1.5pt); \fill[linepink] (2.02,2.66) circle (1.5pt); \fill[lineblue] (2.17,2.75) circle (1.5pt);
  \fill[lineteal] (2.02,1.35) circle (1.5pt); \fill[linegreen] (2.17,1.44) circle (1.5pt); \fill[linepurple] (2.17,1.26) circle (1.5pt);
  \fill[linered] (3.37,2.84) circle (1.5pt); \fill[linepink] (3.37,2.66) circle (1.5pt); \fill[linegreen] (3.52,2.75) circle (1.5pt);
  \fill[lineblue] (3.37,1.44) circle (1.5pt); \fill[lineteal] (3.37,1.26) circle (1.5pt); \fill[linepurple] (3.52,1.35) circle (1.5pt);
  \foreach \x in {4.80,6.15,7.50,8.85}{
    \fill[linered] (\x-.08,2.75) circle (1.5pt); \fill[lineblue] (\x+.07,2.84) circle (1.5pt); \fill[linegreen] (\x+.07,2.66) circle (1.5pt);
    \fill[linepink] (\x-.08,1.35) circle (1.5pt); \fill[lineteal] (\x+.07,1.44) circle (1.5pt); \fill[linepurple] (\x+.07,1.26) circle (1.5pt);
  }
  \draw[->,black!65] (.75,-.18)--(9.25,-.18)
    node[midway,below] {$\mathbb P^1$};
  \node[draw,fill=white,rounded corners,anchor=north east,align=left]
    at (9.62,3.47) {\textcolor{linered}{$B_1^\sharp$}: red--pink\quad
      \textcolor{lineblue}{$B_2^\sharp$}: blue--turquoise\\[-1pt]
      \textcolor{linegreen}{$B_3^\sharp$}: green--purple\quad
      paired traces $=$ local sheets};
\end{tikzpicture}
\caption{A ruled-chart schematic of Polizzi's branch divisor
$B=B_1^\sharp+B_2^\sharp+B_3^\sharp+\sum_{i=1}^7L_i$ on $\mathbb F_2$.
The six colored traces represent local sheets of the three irreducible
bisections, paired red--pink, blue--turquoise, and green--purple.  The seven
vertical curves are the branch fibers, and the two marked disks on each
record the colored triples of Proposition~\ref{prop:colored-partitions}.
The traces are intentionally interrupted at those disks: the picture
specifies neither global sheet continuation nor a distinguished Hurwitz
ordering.  The thin gray curve is the negative section $C_0$ and is not a
branch component.}
\label{fig:global-branch-locus}
\end{figure}

\begin{figure}[ht]
\centering
\begin{tikzpicture}[>=Latex,node distance=1.8cm,every node/.style={font=\small}]
\node[draw,rounded corners,fill=quartic!10,minimum width=2.7cm,minimum height=.8cm] (Shat) {$\widehat S$};
\node[draw,rounded corners,fill=lineblue!8,right=2.2cm of Shat,minimum width=2.7cm,minimum height=.8cm] (Sbar) {$\overline S$};
\node[draw,rounded corners,fill=linegreen!8,right=2.2cm of Sbar,minimum width=2.7cm,minimum height=.8cm] (Ftwo) {$\mathbb F_2$};
\node[draw,rounded corners,fill=lineorange!10,below=1.45cm of Ftwo,minimum width=2.2cm,minimum height=.8cm] (Base) {$\mathbb P^1$};
\draw[->,very thick,quartic] (Shat)--node[above] {$c$} (Sbar);
\draw[->,very thick,lineblue] (Sbar)--node[above] {$q_B$} (Ftwo);
\draw[->,very thick,linegreen] (Ftwo)--node[right] {$p$} (Base);
\draw[->,very thick,lineorange,bend right=18] (Shat) to node[below left] {genus-two fibration} (Base);
\end{tikzpicture}
\caption{Polizzi's canonical double-cover model.  The map $q_B$ is the
normal double cover with branch divisor
$B=B_1^\sharp+B_2^\sharp+B_3^\sharp+\sum_{i=1}^7L_i$, and $c$ is its
canonical resolution.}
\label{fig:polizzi-double-cover}
\end{figure}

\subsection{A marked degree-four reference cover}

The restriction of the Albanese map to a regular genus-two fiber
$C\cong\Sigma_2$ has degree four.  Let $\iota_C$ be the hyperelliptic
involution.  After translating the origin of $E$, the identity
$h\circ\iota_C=-h$ gives the quotient diagram
\begin{figure}[ht]
\centering
\begin{tikzcd}[column sep=4.2em,row sep=3.0em]
C \arrow[r,"h", "4"'] \arrow[d,"q_C"',"2"] &
E \arrow[d,"q_E","2"']\\
\mathbb P^1_x \arrow[r,"\varphi"',"4"] &
\mathbb P^1_z=E/\{\pm1\}.
\end{tikzcd}
\caption{The intrinsic degree-four branch-cover picture on a regular fiber.
The six branch points of $q_C$ occur in three pairs supplied by Polizzi's
three bisections.}
\label{fig:degree-four-quotient}
\end{figure}

The quotient diagram is intrinsic.  Indeed, the hyperelliptic involution
acts as $-1$ on $H^0(C,K_C)$.  Hence the differential of
$h\circ\iota_C+h$ vanishes, so this map is constant.  Translating the
origin of $E$ makes the constant zero and gives
$h\circ\iota_C=-h$.  In particular every Weierstrass point maps to
$E[2]$.

Let $\xi_1,\xi_2,\xi_3$ be the nonzero two-torsion points of $E$.  In
Polizzi's model the bisection $B_j^\sharp$ is the image of divisors
$\xi_j+x+(-x)$, whose Abel--Jacobi class is constantly $\xi_j$.  Therefore
the two points of $B_j^\sharp\cap L_t$ correspond to a pair of Weierstrass
points $w_j^+,w_j^-$ satisfying
\[
\varphi(q_C(w_j^+))=\varphi(q_C(w_j^-))=q_E(\xi_j).
\]
Thus the three bisections pair the six hyperelliptic branch points over the
three nonzero two-torsion values.  This intrinsic pairing is all that is
used below; no marked permutation tuple or Hurwitz-class identification is
asserted.

\begin{figure}[ht]
\centering
\begin{tikzpicture}[x=1cm,y=1cm,every node/.style={font=\small}]
  \draw[rounded corners,fill=black!2] (-4.7,-.95) rectangle (4.7,1.18);
  \node[anchor=west] at (-4.45,.93) {six branch points on $L_t\cong\mathbb P^1_x$};
  \draw[gray!65] (-4.25,-.08)--(4.25,-.08);
  \coordinate (r) at (-3.75,.16);
  \coordinate (p) at (-2.55,-.26);
  \coordinate (b) at (-1.35,.18);
  \coordinate (t) at (.05,-.24);
  \coordinate (g) at (1.55,.19);
  \coordinate (v) at (3.25,-.22);
  \fill[linered] (r) circle (3.8pt) node[above=5pt,text=linered] {$w_1^+$};
  \fill[linepink] (p) circle (3.8pt) node[below=5pt,text=linepink] {$w_1^-$};
  \fill[lineblue] (b) circle (3.8pt) node[above=5pt,text=lineblue] {$w_2^+$};
  \fill[lineteal] (t) circle (3.8pt) node[below=5pt,text=lineteal] {$w_2^-$};
  \fill[linegreen] (g) circle (3.8pt) node[above=5pt,text=linegreen] {$w_3^+$};
  \fill[linepurple] (v) circle (3.8pt) node[below=5pt,text=linepurple] {$w_3^-$};
  \draw[linered!80!linepink,very thick,dashed,bend left=28]
    (r) to node[above,text=black] {$B_1^\sharp$} (p);
  \draw[lineblue!75!lineteal,very thick,dashed,bend left=25]
    (b) to node[above,text=black] {$B_2^\sharp$} (t);
  \draw[linegreen!75!linepurple,very thick,dashed,bend left=24]
    (g) to node[above,text=black] {$B_3^\sharp$} (v);
\end{tikzpicture}
\caption{The intrinsic pairing of the six branch points on a regular ruling
fiber.  Red--pink, blue--turquoise, and green--purple are the two
intersections with $B_1^\sharp,B_2^\sharp,B_3^\sharp$, respectively.  Their
left-to-right placement is schematic: the picture specifies neither a
distinguished path system nor a global Hurwitz ordering.}
\label{fig:colored-branch-pairs}
\end{figure}

\subsection{The six moving hyperelliptic branch points}

Fix a regular value $t\in\mathbb P^1$ and write $L_t=p^{-1}(t)\cong\mathbb P^1$.  Since
\[
B_j^\sharp\cdot L=(2C_0+5L)\cdot L=2,
\]
each bisection contributes two points to $L_t$.  Thus
\[
L_t\cap B^\sharp=\{b_1(t),\ldots,b_6(t)\}.
\]
After choosing an affine coordinate $u$ on $L_t$, the double cover of the ruling fiber is therefore represented birationally by
\begin{equation}\label{eq:Polizzi-hyperelliptic-fiber}
v^2=\prod_{\nu=1}^6\bigl(u-b_\nu(t)\bigr).
\end{equation}
For regular $t$ this is a smooth genus-two curve.  Polizzi's paper specifies
the divisor classes and incidence data rather than a single affine polynomial
whose six roots can immediately be continued numerically.

\subsection{The six simple ramification values}

Let
\[
\nu_j:\widetilde B_j^\sharp\longrightarrow B_j^\sharp
\]
be the normalization.  Since $B_j^\sharp$ is rational, $\widetilde B_j^\sharp\cong\mathbb P^1$.  The ruling induces a degree-two morphism
\[
\rho_j=p\circ\nu_j:\widetilde B_j^\sharp\longrightarrow\mathbb P^1.
\]
Riemann--Hurwitz gives
\[
-2=2(-2)+\deg R_{\rho_j},
\qquad\text{hence}\qquad
\deg R_{\rho_j}=2.
\]
Thus the three normalizations have total ramification degree six.  Because
each $\rho_j$ has degree two in characteristic zero, every ramification
point has index two and contributes one to $R_{\rho_j}$.  Consequently
there are two simple ramification points on each normalization, six points
on their disjoint union.  The covering involutions can in fact be read
directly from Polizzi's elliptic-curve formulas.

\begin{proposition}\label{prop:bisection-quotient}
Let $\xi_j\in E[2]\setminus\{0\}$.  Under the normalization
\[
\widetilde B_j^\sharp\cong E/\{\pm1\},\qquad
[x]\longmapsto \xi_j+x+(-x),
\]
the deck involution of
$\rho_j:\widetilde B_j^\sharp\to\mathbb P^1$ is
\[
\tau_j([x])=[x+\xi_j].
\]
Equivalently, $\rho_j$ is the quotient by $\tau_j$, followed by an
automorphism of its target $\mathbb P^1$.

More explicitly, write
\[
E:\quad y^2=(x-e_1)(x-e_2)(x-e_3),
\qquad \xi_j=(e_j,0),
\]
and let $\{j,k,\ell\}=\{1,2,3\}$.  On the coordinate
$x:E/\{\pm1\}\to\mathbb P^1$, the involution is
\[
x\longmapsto e_j+
\frac{(e_j-e_k)(e_j-e_\ell)}{x-e_j}.
\]
Thus an invariant degree-two coordinate is
\[
q_j(x)=x+e_j+
\frac{(e_j-e_k)(e_j-e_\ell)}{x-e_j},
\]
and the two ramification points are characterized by
\[
(x-e_j)^2=(e_j-e_k)(e_j-e_\ell).
\]
\end{proposition}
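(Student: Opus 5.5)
The argument splits into two parts: first identify the two points of $B_j^\sharp\cap L_t$ with an orbit of $x\mapsto x+\xi_j$, then compute that involution on the $x$-coordinate. For the first part, work in the symmetric-product model: $E(3)=\mathrm{Sym}^3E$, with the Abel--Jacobi map to $E$ as the Albanese. As recalled above, $B_j^\sharp$ is the image of the curve $\{\xi_j+x+(-x)\}$, parametrized by $[x]\in E/\{\pm1\}\cong\mathbb P^1$.

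\textbf{Injectivity away from the nodes.} The map $[x]\mapsto \xi_j+x+(-x)$ is injective on effective divisors except where two different classes $[x]\neq[x']$ give the same divisor. That happens only when $\{\xi_j,x,-x\}=\{\xi_j,x',-x'\}$ as multisets. This forces $x'=\xi_j$ or similar, i.e.\ the coincidences occur at $x\in\{\pm\xi_j\}$ or when $x$ itself is $2$-torsion, which accounts for the two nodes. Hence $E/\{\pm1\}$ is the normalization $\widetilde B_j^\sharp$.

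\textbf{The ruling map.} Next I would identify $\rho_j=p\circ\nu_j$. The fiber of the genus-two pencil over $t$ corresponds, via Proposition~\ref{prop:polizzi-branch} and the discussion of Figure~\ref{fig:degree-four-quotient}, to a curve $C_t$ whose two Weierstrass points $w_j^\pm$ lie over $B_j^\sharp$. The key point is that the divisors $\xi_j+x+(-x)$ and $\xi_j+(x+\xi_j)+(-x-\xi_j)$ lie on the same genus-two fiber. To see this, I would use the description of the pencil in Polizzi's model: the $\mathbb F_2$ ruling is induced by a degree-two map, and a natural candidate is translation-invariance under $\xi_j$. Translation by $\xi_j$ preserves the set $\{\xi_j\}\cup\{\pm x\}$-type configurations in the required way, because $\xi_j+x+\xi_j=x$. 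So $\tau_j([x])=[x+\xi_j]$ is a well-defined involution of $E/\{\pm1\}$ preserving fibers of $\rho_j$. Since $\rho_j$ has degree two and $\tau_j$ is nontrivial, $\rho_j$ is the quotient by $\tau_j$ up to an automorphism of the target.

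\textbf{Main obstacle.} The genuinely hard step is proving that $\tau_j$ preserves $\rho_j$-fibers, as opposed to merely being an involution on $E/\{\pm1\}$. I would try first to verify it via the intrinsic quotient diagram: the pair $w_j^\pm$ on $C_t$ satisfies $h(w_j^+)=h(w_j^-)=\xi_j$. The points $x$ attached to $w_j^\pm$ should then be read off from the other fibers of $h$ over $\xi_j$, exploiting that $h^*(\xi_j)$ has degree four and contains $w_j^+$ and $w_j^-$. If that route stalls, a fallback is to check that $\tau_j$ has exactly the two fixed points predicted by Riemann--Hurwitz ($\deg R_{\rho_j}=2$). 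Combined with compatibility with the two nodes, which $\tau_j$ swaps, this pins down the involution, since a degree-two map from $\mathbb P^1$ is determined by its fixed-point pair.

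\textbf{Explicit formulas.} With $\xi_j=(e_j,0)$, the addition law gives
\[
x(P+\xi_j)=e_j+\frac{(e_j-e_k)(e_j-e_\ell)}{x(P)-e_j}.
\]
This is checked by intersecting the line through $P$ and $\xi_j$ with the cubic and taking the sum of the $x$-roots. The invariant is then $q_j=x+\tau_j(x)$. The fixed points solve $x=\tau_j(x)$, i.e.\ $(x-e_j)^2=(e_j-e_k)(e_j-e_\ell)$, and these are the two ramification points.
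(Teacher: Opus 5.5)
\textbf{Gap: the deck involution is never identified.} Your explicit part is correct and is exactly how the paper finishes: the addition-law formula for $x(P+\xi_j)$, the invariant $q_j=x+\tau_j(x)$, and the fixed-point equation. But the step the proposition rests on is never proved, namely that the deck involution of $\rho_j$ is $\tau_j$.

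\begin{itemize}
\item The ``translation-invariance'' paragraph only uses $2\xi_j=0$. That shows $\tau_j$ is an involution of $E/\{\pm1\}$, not that it preserves the fibers of the pencil.
\item Route (a) carries no information. Every point of the curve $\{\xi_j+x+(-x)\}$ has Abel--Jacobi image $\xi_j$, so $h(w_j^\pm)=\xi_j$ holds automatically and cannot tell you which two parameters $[x],[x']$ lie on a common fiber.
\item The fallback (b) is vacuous as stated. Every nontrivial involution of $\mathbb P^1$ has exactly two fixed points, so matching the Riemann--Hurwitz count proves nothing. ``An involution is determined by its fixed pair'' helps only if you already know the ramification points of $\rho_j$, and locating them is what the proposition is meant to do.
\item Node compatibility could work in principle, since a M\"obius involution is determined by two of its $2$-cycles. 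But you never locate the preimages of the nodes. Your proposed location also rests on a false computation: $\{\xi_j,x,-x\}=\{\xi_j,x',-x'\}$ as multisets forces $x'=\pm x$. So $[x]\mapsto\xi_j+x+(-x)$ is injective into $E(3)$, and the nodes of $B_j^\sharp$ come from the map to $\mathbb F_2$ identifying pairs of points, not from coincident divisors.
\item The normalization $\widetilde B_j^\sharp\cong E/\{\pm1\}$ is part of the statement's setup, imported from Polizzi, so that paragraph is not needed in any case.
\end{itemize}

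\textbf{The missing idea: exhibit explicit fibers of $\rho_j$.} The paper does this with Polizzi's four reducible fibers $G_r^*=M_r+N_r$, where $\eta_r$ generates the $r$th order-three subgroup of $E[3]$.
\begin{itemize}
\item Polizzi's formulas show that $G_r^*$ meets $\{\xi_j+x+(-x)\}$ transversally in the two divisors $\xi_j+\eta_r+2\eta_r$ and $\xi_j+(\xi_j+\eta_r)+(\xi_j+2\eta_r)$. These are the points $[\eta_r]$ and $[\eta_r+\xi_j]$, using $-(\xi_j+\eta_r)=\xi_j+2\eta_r$.
\item Since $\rho_j$ has degree two and both intersections are reduced, $\{[\eta_r],[\eta_r+\xi_j]\}$ is a complete fiber of $\rho_j$.
\item So the deck involution sends $[\eta_r]\mapsto[\eta_r+\xi_j]$, exactly as $\tau_j$ does.
\item The four points $[\eta_r]$ are distinct, and two M\"obius transformations agreeing at three points coincide. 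Hence the deck involution is $\tau_j$.
\end{itemize}
With this step in place, your explicit formulas complete the proof.
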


\begin{proof}
Choose generators $\eta_1,\ldots,\eta_4$ for the four order-three cyclic
subgroups of $E[3]$; these subgroups correspond bijectively to the four
inversion orbits $\{\pm\eta_r\}$.  The following formulas are read from
Polizzi's descriptions of $A_{\xi_j},M_r,N_r$ and the reducible fibers
\cite[Sections~5 and~9]{Polizzi}.  Polizzi's four reducible fibers
$G_r^*=M_r+N_r$ meet $A_{\xi_j}$ in the two divisors
\[
\xi_j+\eta_r+2\eta_r,
\qquad
\xi_j+(\xi_j+\eta_r)+(\xi_j+2\eta_r).
\]
On the normalization these are the points $[\eta_r]$ and
$[\eta_r+\xi_j]$, respectively.  Polizzi's transverse intersection
calculation shows that both occur with multiplicity one.  They are therefore
the two reduced intersections
with $G_r^*$ and hence the entire degree-two fiber of $\rho_j$ over that
value.  Translation by $\xi_j$ commutes with inversion
and therefore descends to the involution $\tau_j$ of $E/\{\pm1\}$.
The four points $[\eta_r]$ are distinct in $E/\{\pm1\}$.  The deck
involution sends each $[\eta_r]$ to $[\eta_r+\xi_j]$, exactly as $\tau_j$
does.  Since two automorphisms of $\mathbb P^1$ agreeing on three distinct
points coincide, the deck involution is $\tau_j$.

The addition law on the Weierstrass model gives the displayed fractional
linear formula for translation by $(e_j,0)$.  Adding $x$ produces an
invariant rational function of degree two.  The ramification points of the
quotient are the fixed points of the involution, which satisfy the final
equation.
\end{proof}

The three target automorphisms implicit in
Proposition~\ref{prop:bisection-quotient} need not agree.  Consequently the
formulas locate the ramification points on each normalized bisection, but
do not by themselves place all six branch values in a single marked
coordinate on the pencil base.  We next show, using the singular-fiber
classification, that their images in the common base are pairwise
distinct.

At a point lying over one of the distinguished fibers $L_i$, the
ordinary-quadruple description of $B=B^\sharp+L_i$ implies that the ruling
fiber is transverse to every local branch of $B^\sharp$.  Hence none of
these points is a ramification point of the corresponding normalization
map.  All six ramification points occur away from the seven $L_i$.  By
\cite[Proposition~9.2]{Polizzi}, the singularities of $B^\sharp$ occur on
the distinguished fibers.  Away from those fibers, a ramification point is
therefore a smooth tangency of one normalized bisection to the ruling.  The
local double-cover equation is an $A_1$ model and produces one nodal fiber.
Riemann--Hurwitz gives six ramification points counted with multiplicity,
while Xiao's classification, recalled in
\cite[Proposition~6.1]{Polizzi}, gives exactly six irreducible one-node
fibers away from the $L_i$.  If two ramification points had the same ruling
value, the corresponding fiber would contain two nodes, or a non-transverse
collision would give a worse singularity.  Either alternative contradicts
that classification.  The six ramification values are therefore pairwise
distinct and exhaust the six irreducible nodal fibers.

\begin{proposition}\label{prop:polizzi-six-nonseparating}
Let $\tau$ be one of the six ramification values.  Exactly two of the six points in \eqref{eq:Polizzi-hyperelliptic-fiber} coalesce at $\tau$.  For a positively oriented local coordinate $s=t-\tau$, suitable fiber coordinates give
\[
u_\pm(s)=u_0\pm c\sqrt{s}+O(s),\qquad c\ne0.
\]
A positive circuit around $\tau$ therefore gives a positive half twist $h_{\alpha_\tau}$ about an arc joining the two colliding branch points.  Its hyperelliptic lift is the positive Dehn twist
\[
t_{a_\tau},\qquad a_\tau=q^{-1}(\alpha_\tau),
\]
where $q:\Sigma_2\to S^2$ is the hyperelliptic quotient.  The curve $a_\tau$ is nonseparating.
\end{proposition}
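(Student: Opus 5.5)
We work in a neighbourhood of the point $b\in\mathbb F_2$ where the bisection $B_j^\sharp$ is tangent to the ruling fiber $L_\tau$. The local computation has four steps.

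\textbf{Step 1: local form of the branch curve.} By the discussion preceding the proposition, $b$ lies off the seven $L_i$. It is a smooth point of $B^\sharp$, because the singularities of $B^\sharp$ lie on the distinguished fibers. It is a simple ramification point of $\rho_j$, since a degree-two map in characteristic zero has index two at every ramification point. Hence the curve $B_j^\sharp$ is smooth at $b$ and has a simple (order-two) tangency with $L_\tau$. We choose local coordinates $(s,u)$ near $b$ with $s=t-\tau$ a positively oriented coordinate on the base and $u$ a fiber coordinate with $u(b)=u_0$. In these coordinates the branch germ is $s=g(u)$ with $g(u_0)=0$, $g'(u_0)=0$ and $g''(u_0)\neq0$. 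After a holomorphic change of $u$ this becomes $s=(u-u_0)^2/c^2$ for some $c\ne0$. Solving for $u$ gives $u_\pm(s)=u_0\pm c\sqrt s+O(s)$.

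\textbf{Step 2: only these two points collide.} The remaining four points of $L_t\cap B^\sharp$ are the other two sheets coming from $B_k^\sharp$ and $B_\ell^\sharp$ and the second point of $B_j^\sharp$. They stay distinct from each other and from $u_0$ at $t=\tau$. Otherwise $L_\tau$ would meet $B$ at a further ramification or singular point. Such a point would produce a second node or a worse singularity in the fiber over $\tau$, which Xiao's classification \cite[Proposition~6.1]{Polizzi} excludes. Consequently exactly two of the six points in \eqref{eq:Polizzi-hyperelliptic-fiber} coalesce at $\tau$.

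\textbf{Step 3: the local monodromy is a positive half twist.} Let $s=\varepsilon e^{i\theta}$ run over $\theta\in[0,2\pi]$. Then $\sqrt s$ changes sign, so the two roots $u_\pm$ are exchanged. They rotate counterclockwise by angle $\pi$ about $u_0$, along the short segment $\alpha_\tau$ joining them, while the other four branch points remain fixed up to isotopy. This motion is the positive half twist $h_{\alpha_\tau}$. The sign is positive because $s\mapsto\sqrt s$ preserves the complex orientation. Equivalently, the local double cover is $v^2=(u-u_0)^2-c^2s$, an $A_1$ singularity with the complex orientation.

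\textbf{Step 4: the hyperelliptic lift.} The preimage $q^{-1}(\alpha_\tau)$ of an arc joining two branch points is a simple closed curve $a_\tau$. The lift of $h_{\alpha_\tau}$ commuting with the hyperelliptic involution is the Dehn twist $t_{a_\tau}$. To pin down its sign and rule out the extra factor $\iota$, we use the local Milnor fibre. The germ $v^2=w^2-s$, with $w=u-u_0$, is a standard Lefschetz critical point. Its vanishing cycle is the lift of the segment $\alpha_\tau$, and by the Picard--Lefschetz formula the monodromy is the positive twist $t_{a_\tau}$. This is also the content of the standard Birman--Hilden correspondence between half twists and Dehn twists.

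\textbf{Nonseparation of $a_\tau$.} The arc $\alpha_\tau$ separates none of the branch points: its complement in the sphere is connected and contains the other four branch points. The two-point/four-point split of the branch points is odd on both sides, whereas separating curves in genus two arise from arcs or curves enclosing three branch points on each side. Concretely, $a_\tau$ meets the lift of an arc from $u_+$ to one of the other branch points in a single point. Intersection number one rules out $a_\tau$ being separating.

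The main obstacle is Step 2, namely excluding simultaneous collisions. That exclusion rests on Xiao's classification together with the distinctness of ramification values already established, rather than on any new computation.
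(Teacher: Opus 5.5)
Your proposal is correct and follows essentially the paper's route: a smooth simple tangency of one bisection, the local square-root model, the positive half twist, and its positive Dehn-twist lift about a nonseparating curve. As in the paper, excluding further collisions rests on Polizzi's singularity locus and Xiao's classification from the paragraph preceding the proposition; you spell this out in more detail, notably with the Picard--Lefschetz argument ruling out the lift $\iota\, t_{a_\tau}$. There are two harmless slips: at the tangency both points of $B_j^\sharp\cap L_\tau$ form the colliding pair, so the remaining four are the two sheets each of $B_k^\sharp$ and $B_\ell^\sharp$ (there is no ``second point of $B_j^\sharp$''), and a $2+4$ split is even, not odd, on both sides---your intersection-number-one argument is what actually proves $a_\tau$ nonseparating.
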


\begin{proof}
A ramification point of a degree-two map in characteristic zero is simple.  The local form is $s=w^2$ after holomorphic coordinate changes, giving the displayed square-root exchange of the two local sheets.  The positive loop in $s$ is the positive geometric half twist.  The inverse image under the genus-two hyperelliptic cover of an arc joining two branch points is a nonseparating simple closed curve, and the half twist lifts to the positive Dehn twist about that curve.
\end{proof}

We denote the resulting six nonseparating vanishing cycles by
\[
v_1,\ldots,v_6.
\]
No labeling by five-chain words is intended at this stage; the labels record the six geometric critical values.

\subsection{The seven \texorpdfstring{$3+3$}{3+3} degenerations}

Fix a distinguished branch fiber $L_i$ and choose a local base coordinate
$s$ with $L_i=p^{-1}(0)$.  Choose a fiber coordinate $u$ near the ordinary
triple point $x_i$.  Because $B^\sharp$ has an ordinary triple point at
$x_i$ and the fourth branch $L_i$ is transverse to all three branches,
after multiplication by a holomorphic unit a local equation is
\[
 \prod_{k=1}^3\bigl(u-x_i-\alpha_{i,k}s+O(s^2)\bigr)=0,
\]
where transversality to $L_i$ makes the slopes finite and ordinariness makes
them pairwise distinct.  Thus the branches have expansions
\begin{equation}\label{eq:Polizzi-triple-x}
u_{i,k}(s)=x_i+\alpha_{i,k}s+O(s^2),\qquad k=1,2,3,
\end{equation}
with pairwise distinct $\alpha_{i,k}$.  Likewise, near the second triple point $y_i$,
\begin{equation}\label{eq:Polizzi-triple-y}
u_{i,k+3}(s)=y_i+\beta_{i,k}s+O(s^2),\qquad k=1,2,3,
\end{equation}
with pairwise distinct $\beta_{i,k}$.  Thus as $s\to0$ the six branch points split into two clusters
\[
\{x_i,x_i,x_i\}\ \sqcup\ \{y_i,y_i,y_i\}.
\]
Since each triple point is ordinary, its three tangent directions are
distinct.  Since $L_i$ is transverse to every branch, those directions have
finite, pairwise distinct slopes with respect to the ruling coordinate.

Polizzi's multiplicity table determines which bisections supply the three
branches in each cluster.  We record this information because it is the
part of the colored branch data that is independent of any choice of
distinguished paths.

\begin{proposition}\label{prop:colored-partitions}
Write only the bisection indices in each triple.  For the three special
fibers one has
\[
\begin{array}{c|c}
L_1 & \{1,2,2\}\mid\{1,3,3\}\\
L_2 & \{1,1,2\}\mid\{2,3,3\}\\
L_3 & \{1,1,3\}\mid\{2,2,3\},
\end{array}
\]
where the two sides correspond respectively to
$(x_{23},x_{32})$, $(x_{13},x_{31})$, and $(x_{12},x_{21})$.
For each of the remaining four fibers $L_r^*$, the two triple points
$m_r,n_r$ both have the transversal type
\[
\{1,2,3\}\mid\{1,2,3\}.
\]
\end{proposition}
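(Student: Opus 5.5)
We read off the colored partitions from the elliptic-curve description of the bisections used in the proof of Proposition~\ref{prop:bisection-quotient}. There, $B_j^\sharp$ is the image of the curve $A_{\xi_j}=\{\xi_j+x+(-x)\}$, normalized by $E/\{\pm1\}$. The plan is to determine, on each distinguished fiber, which points of each normalization $\widetilde B_j^\sharp$ lie over the two triple points. We then count how many local branches each bisection contributes to each cluster.

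\emph{The four fibers $L_r^*$.} From the proof of Proposition~\ref{prop:bisection-quotient}, the fiber $G_r^*=M_r+N_r$ meets $A_{\xi_j}$ in two reduced points, namely the divisors $\xi_j+\eta_r+2\eta_r$ and $\xi_j+(\xi_j+\eta_r)+(\xi_j+2\eta_r)$. These lie on different components, one on $M_r$ and one on $N_r$, according to Polizzi's descriptions. They therefore map to the two distinct triple points $m_r,n_r\in L_r^*$. Hence every $B_j^\sharp$ passes through $m_r$ once and through $n_r$ once, each time with a single smooth branch. Three bisections each contributing one branch at each point gives type $\{1,2,3\}\mid\{1,2,3\}$. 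We must check that $m_r\ne n_r$ and that no node of any $B_j^\sharp$ lies on $L_r^*$. Both follow because $\rho_j$ has degree two and the two preimages $[\eta_r]\ne[\eta_r+\xi_j]$ are distinct.

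\emph{The three fibers $L_1,L_2,L_3$.} Each $B_a^\sharp$ has exactly two nodes, so there are six nodes in total, and the stated incidence says they sit at the six points $x_{ab}$ with $b\ne a$. Hence $x_{ab}$ carries two branches of $B_a^\sharp$ plus one branch of a second bisection, which we expect to be $B_b^\sharp$. The node of $B_a^\sharp$ accounts for its whole intersection with that ruling fiber, since $B_a^\sharp\cdot L=2$. Consequently $B_a^\sharp$ contributes nothing to the other triple point on that fiber. Place $x_{ab}$ and $x_{ba}$ on the same fiber $L_c$ with $\{a,b,c\}=\{1,2,3\}$. At $x_{ab}$ the branches are then $\{a,a\}$ together with a third branch, which cannot be $a$; at $x_{ba}$ they are $\{b,b\}$ together with a third branch, which cannot be $b$. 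Each bisection meets $L_c$ in degree two, and $B_a^\sharp$ and $B_b^\sharp$ are already exhausted. So both remaining branches come from $B_c^\sharp$, one at each point. This gives $\{a,a,c\}\mid\{b,b,c\}$. For $c=1$ and $(a,b)=(2,3)$ this is $\{1,2,2\}\mid\{1,3,3\}$, and similarly for $c=2$ and $c=3$, matching the table.

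The main obstacle is justifying that $x_{ab}$ and $x_{ba}$ lie on the same fiber $L_c$. This is exactly where the multiplicity table of \cite[Proposition~9.2]{Polizzi} is imported. Geometrically, the nodes of $A_{\xi_a}$ are pairs $[x]\ne[x']$ with equal image. On $E/\{\pm1\}$ these are the $\tau_a$-orbits of points mapping to the same divisor, namely $x=\xi_b$ type coincidences $\xi_a+\xi_b+\xi_c$. I would try first to verify the fiber placement by this identification: the common divisor $\xi_a+\xi_b+\xi_c$ for both $x_{ab}$ and $x_{ba}$ forces a common fiber labelled by the complementary index $c$. Failing that, I would cite Polizzi's table directly.
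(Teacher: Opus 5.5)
Your route differs from the paper's. The paper's proof simply reads the multiplicity of each $B_j^\sharp$ at each special triple point off Polizzi's table. You use only the node data and reconstruct the remaining entries from $B_j^\sharp\cdot L=2$ and the transversality of $L_c$ to every local branch. That counting is correct. If the node at $x_{ab}$ belongs to $B_a^\sharp$ and $x_{ab},x_{ba}\in L_c$, then $B_a^\sharp$ and $B_b^\sharp$ are exhausted on $L_c$ by their nodes. So the third branch at each point comes from $B_c^\sharp$, giving $\{a,a,c\}\mid\{b,b,c\}$. This usefully shows that the special columns are forced by the node positions.

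However, both inputs are themselves table data. Which bisection is nodal at $x_{ab}$ is exactly what decides which side of each row belongs to $x_{ab}$; were it $B_b^\sharp$, the sides would be swapped. The placement on $L_c$ is the step you yourself defer to the table. So your argument ultimately rests on Polizzi's table just as the paper's does. Also, your preliminary guess that the third branch at $x_{ab}$ comes from $B_b^\sharp$ is wrong, and your own count refutes it.

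Two auxiliary steps are mis-argued. First, on $L_r^*$, the absence of a node does not follow from $[\eta_r]\ne[\eta_r+\xi_j]$: a node is precisely two distinct points of the normalization with the same image. What separates the two points is that one lies on $M_r$ and the other on $N_r$, and $\widehat f_3$ contracts these components to different quadruple points. You assert this correspondence without proof. It does hold: in the canonical resolution each ordinary quadruple point is blown up once, and the exceptional curve meets the new branch locus in four points. Its preimage is then an elliptic $(-2)$-curve, necessarily the strict transform of $M_r$ or of $N_r$ in $\widehat S$. In any case, the transversal type at $m_r,n_r$ is already among the imported facts stated just before Remark~\ref{rem:imported-polizzi}.

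Second, your proposed geometric check of the fiber placement would fail. The map $[x]\mapsto\xi_a+x+(-x)$ is injective on $E/\{\pm1\}$, so nodes of $B_a^\sharp$ never arise from coincident divisors. Moreover, $\xi_a+\xi_b+\xi_c$ is not of the form $\xi_a+x+(-x)$ at all, since $-\xi_b=\xi_b\ne\xi_c$. A node appears only after $\widehat f_3$ contracts an elliptic component: $A_{\xi_a}$ meets the component lying over $x_{ab}$ in two distinct points forming a $\tau_a$-orbit. A geometric derivation of $x_{ab},x_{ba}\in L_c$ would instead use Polizzi's decomposition $G_c=B_{ab}+B_{ba}$ together with that component--triple-point correspondence. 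Otherwise, as you say, one cites the table.
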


\begin{proof}
At $x_{ab}$ the multiplicity of $B_j^\sharp$ is the entry in Polizzi's
multiplicity table.  Reading its six special columns gives the first three
partitions.  In each of the columns $m_r$ and $n_r$, all three entries are
one, giving the transversal partitions.
\end{proof}

\begin{figure}[ht]
\centering
\begin{tikzpicture}[x=1cm,y=1cm,every node/.style={font=\scriptsize}]
\newcommand{\panel}[3]{%
  \draw[rounded corners,fill=black!2] (#1-1.8,#2-.62) rectangle (#1+1.8,#2+.62);
  \draw[gray!65] (#1,#2-.48)--(#1,#2+.38);
  \node at (#1,#2+.82) {#3};}
\panel{-2.25}{1.05}{$L_1:\ (122)\mid(133)$}
\fill[linered] (-3.40,1.05) circle (2.6pt);
\fill[lineblue] (-3.12,1.20) circle (2.6pt);
\fill[lineteal] (-3.12,.90) circle (2.6pt);
\fill[linepink] (-1.10,1.05) circle (2.6pt);
\fill[linegreen] (-1.38,1.20) circle (2.6pt);
\fill[linepurple] (-1.38,.90) circle (2.6pt);
\panel{2.25}{1.05}{$L_2:\ (112)\mid(233)$}
\fill[linered] (1.10,1.20) circle (2.6pt);
\fill[linepink] (1.10,.90) circle (2.6pt);
\fill[lineblue] (1.38,1.05) circle (2.6pt);
\fill[lineteal] (3.40,1.05) circle (2.6pt);
\fill[linegreen] (3.12,1.20) circle (2.6pt);
\fill[linepurple] (3.12,.90) circle (2.6pt);
\panel{-2.25}{-1.45}{$L_3:\ (113)\mid(223)$}
\fill[linered] (-3.40,-1.30) circle (2.6pt);
\fill[linepink] (-3.40,-1.60) circle (2.6pt);
\fill[linegreen] (-3.12,-1.45) circle (2.6pt);
\fill[lineblue] (-1.38,-1.30) circle (2.6pt);
\fill[lineteal] (-1.38,-1.60) circle (2.6pt);
\fill[linepurple] (-1.10,-1.45) circle (2.6pt);
\panel{2.25}{-1.45}{$L_r^*:\ (123)\mid(123)$}
\fill[linered] (1.10,-1.45) circle (2.6pt);
\fill[lineblue] (1.38,-1.30) circle (2.6pt);
\fill[linegreen] (1.38,-1.60) circle (2.6pt);
\fill[linepink] (3.40,-1.45) circle (2.6pt);
\fill[lineteal] (3.12,-1.30) circle (2.6pt);
\fill[linepurple] (3.12,-1.60) circle (2.6pt);
\end{tikzpicture}
\caption{The colored incidence types of the seven $3+3$ fibers.  The
red--pink, blue--turquoise, and green--purple pairs belong to
$B_1^\sharp,B_2^\sharp,B_3^\sharp$, respectively.  The lower-right panel
occurs for each $r=1,\ldots,4$.  Positions within the disks are schematic;
the figure records incidence only, not parallel transport or braid order.}
\label{fig:colored-partitions}
\end{figure}

Proposition~\ref{prop:colored-partitions} fixes the unordered local
partitions.  It does not identify the four transversal separating curves
with one another in a marked reference sphere: that comparison still
requires parallel transport along specified paths and the global
clutching data.

The following elementary scaling statement records the local mechanism.

\begin{lemma}\label{lem:polizzi-scaling}
Suppose three points in a disk have expansions
\[
z_k(s)=z_0+s\gamma_k+O(s^2),\qquad k=1,2,3,
\]
with the $\gamma_k$ pairwise distinct.  For sufficiently small $|s|$, a positive circuit of $s$ around zero gives one positive full twist on the three points.
\end{lemma}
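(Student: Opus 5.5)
We prove \cref{lem:polizzi-scaling} by rescaling the three points and using homotopy invariance of the braid. Translate by $z_0$ and write $s=\varepsilon e^{i\theta}$ with $\varepsilon>0$ small and $\theta\in[0,2\pi]$. The loop of configurations is
\[
z_k(\theta)=\varepsilon e^{i\theta}\gamma_k+O(\varepsilon^2),\qquad k=1,2,3.
\]
The $O(s^2)$ terms are holomorphic in $s$, so the error is bounded by $C\varepsilon^2$ uniformly in $\theta$.

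Divide by $\varepsilon$. This is an isotopy of the disk and does not change the braid class of the loop. The rescaled points are $e^{i\theta}\gamma_k+O(\varepsilon)$. Set
\[
\delta=\min_{k\ne l}|\gamma_k-\gamma_l|>0.
\]
Choose $\varepsilon$ so small that the error is below $\delta/4$. Then the straight-line homotopy
\[
e^{i\theta}\gamma_k+\lambda\cdot(\text{error}),\qquad \lambda\in[0,1],
\]
keeps the three points pairwise at distance at least $\delta/2$ for every $\theta$. It is therefore a homotopy of loops in the configuration space of three distinct points, and it connects our braid to the model loop $\theta\mapsto\{e^{i\theta}\gamma_1,e^{i\theta}\gamma_2,e^{i\theta}\gamma_3\}$.

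The model loop is rigid rotation of the triple by $2\pi$ inside a disk of radius $\max_k|\gamma_k|$. Rotation through angle $\theta$ is an isotopy of the plane, supported in a slightly larger disk after cutoff. At $\theta=2\pi$ its time-one map is the full rotation. As a braid, this is the positive full twist $\Delta^2$ on three strands, the generator of the center of $B_3$.

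The remaining check is orientation. With the convention that a positive circuit of $s$ gives a positive half twist in \cref{prop:polizzi-six-nonseparating} (there, $\sqrt{s}$ gives a rotation by $\pi$ of two points), rotation by $+2\pi$ gives $(\text{half twist})^2$ in the two-point case. In the three-point case it gives $\Delta^2=(\sigma_1\sigma_2)^3$, which is positive. The only step needing care is the uniformity of the error term, which guarantees that no collision occurs along the homotopy; the bound above settles it.
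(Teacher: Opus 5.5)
Your proof is correct and follows the same route as the paper: you remove the $O(s^2)$ terms by a homotopy in the configuration space of three distinct points, then identify the remaining rigid $2\pi$ rotation $z_0+s\gamma_k$ with the central full twist $\Delta_3^2$. Your rescaling by $\varepsilon$ and the explicit pairwise-separation bound $\delta/2$ make the paper's ``sufficiently small $|s|$'' step quantitative; the paper states that step without detail.
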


\begin{proof}
For small $|s|$ the $O(s^2)$ terms can be removed through a homotopy in the configuration space of three distinct points.  The resulting motion $z_0+s\gamma_k$ is a rigid positive $2\pi$ rotation of the three-point configuration, hence the central full twist $\Delta_3^2$.
\end{proof}

Apply the lemma to \eqref{eq:Polizzi-triple-x} and
\eqref{eq:Polizzi-triple-y}.  Choose complementary disks around the two
clusters.  Their boundary curves are distinct as embedded curves but
represent the same unoriented isotopy class $\delta_i$ on the
six-punctured sphere.  Each cluster contributes the positive twist
$T_{\delta_i}$, so the total spherical monodromy is
\begin{equation}\label{eq:Polizzi-spherical-33}
T_{\delta_i}^2.
\end{equation}

\begin{figure}[ht]
\centering
\begin{tikzpicture}[scale=.88,every node/.style={font=\small}]
\begin{scope}[xshift=-3.6cm]
\draw[rounded corners,fill=lineblue!5] (-2,-1.15) rectangle (2,1.15);
\fill[linered] (-1.25,.25) circle (3pt);
\fill[lineblue] (-.95,-.35) circle (3pt);
\fill[linegreen] (-.65,.4) circle (3pt);
\fill[linepink] (.65,-.25) circle (3pt);
\fill[lineteal] (.95,.38) circle (3pt);
\fill[linepurple] (1.3,-.38) circle (3pt);
\draw[linegreen,very thick] (-.95,0) ellipse (.72 and .72);
\node at (0,-1.45) {regular nearby fiber};
\node[linegreen] at (0,1.42) {$\delta_i$ separates $3+3$};
\end{scope}
\draw[->,very thick] (-.8,0)--(.8,0) node[midway,above] {$s\to0$};
\begin{scope}[xshift=3.6cm]
\draw[rounded corners,fill=lineblue!5] (-2,-1.15) rectangle (2,1.15);
\fill[linered] (-1.05,0) circle (3pt);
\fill[lineblue] (-.9,.12) circle (3pt);
\fill[linegreen] (-.9,-.12) circle (3pt);
\fill[linepink] (.9,0) circle (3pt);
\fill[lineteal] (1.05,.12) circle (3pt);
\fill[linepurple] (1.05,-.12) circle (3pt);
\draw[linegreen,very thick] (-.98,0) ellipse (.48 and .42);
\node at (0,-1.45) {$3+3$ collision on $L_r^*$};
\end{scope}
\end{tikzpicture}
\caption{One of the four transversal $3+3$ degenerations in the $K^2=3$ pencil.
A positive loop around the distinguished ruling fiber rotates both triples;
the spherical monodromy is $T_{\delta_i}^2$ and its geometric Lefschetz
lift is the separating twist $t_{d_i}$.  The three special incidence types
are displayed in Figure~\ref{fig:colored-partitions}.}
\label{fig:polizzi-33}
\end{figure}
\begin{remark}\label{rem:vertical-branch}
At $s=0$ the fiber $L_i$ itself is a component of the branch divisor, so one should not interpret the six-point equation for a regular ruling fiber as a finite double-cover equation obtained by simply substituting $s=0$.  The braid calculation is performed over the punctured disk $0<|s|\ll1$, where the branch on the ruling fiber consists of the six intersections with $B^\sharp$.  The canonical resolution of the double cover then supplies the central fiber.  Polizzi identifies that central fiber, after returning to the relatively minimal genus-two fibration, as $W_i+Z_i$.  Consequently the spherical $3+3$ braid and the Picard--Lefschetz nodal model are two descriptions of the same degeneration, but the latter is what fixes the lift in $\Mod(\Sigma_2)$.
\end{remark}

\begin{proposition}\label{prop:polizzi-separating-lift}
Let $d_i=q^{-1}(\delta_i)\subset\Sigma_2$.  Then $d_i$ is connected and separating, and the actual local Lefschetz monodromy of Xiao's $K^2=3$ pencil around $L_i$ is the single positive twist
\[
t_{d_i}.
\]
It maps to $T_{\delta_i}^2$ under the Birman--Hilden quotient
\cite{BirmanHilden}.
\end{proposition}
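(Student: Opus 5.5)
The proof splits into three parts: the topology of $d_i$, the Birman--Hilden comparison, and the identification of the actual local monodromy.

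\emph{Topology of $d_i$.} The curve $\delta_i$ bounds a disk $D$ containing three of the six branch points. The restriction of the hyperelliptic cover $q$ to $D$ is a double cover branched at three points. By Riemann--Hurwitz, $q^{-1}(D)$ is connected with $\chi=2\cdot1-3=-1$. Since $\partial D=\delta_i$ encircles an odd number of branch points, the cover restricted to $\delta_i$ is the connected double cover of the circle. Hence $q^{-1}(D)$ has one boundary component, so it is a torus with one hole, and $d_i=q^{-1}(\delta_i)$ is a single curve. The complement $q^{-1}(S^2\setminus D)$ is likewise a one-holed torus. Therefore $d_i$ is connected and separates $\Sigma_2$ into two genus-one pieces, consistent with the central fiber $W_i+Z_i$ described in Remark~\ref{rem:vertical-branch}.

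\emph{Birman--Hilden comparison.} Consider the symmetric twist $t_{d_i}$ and choose the $\iota$-equivariant representative supported in an annular neighborhood of $d_i$. The involution $\iota$ restricts to this annulus $A$ as a free rotation by $\pi$, and $A/\iota$ is an annulus $A'$ around $\delta_i$. A full $2\pi$ twist on $A$ descends to a full twist traversed twice on the quotient, because the core of $A'$ lifts to a path covering only half of $d_i$. Writing the local models in coordinates ($A\to A'$, $(r,\theta)\mapsto(r,2\theta)$) makes this explicit, so $t_{d_i}\mapsto T_{\delta_i}^2$. By \cite{BirmanHilden}, the quotient map from the symmetric mapping class group onto $\Mod(S^2,6)$ has kernel $\langle\iota\rangle$. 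Consequently the preimages of $T_{\delta_i}^2$ are exactly $t_{d_i}$ and $\iota t_{d_i}$.

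\emph{Identifying the actual monodromy.} This is the main obstacle: we must decide between $t_{d_i}$ and $\iota t_{d_i}$, which the braid data of \eqref{eq:Polizzi-spherical-33} alone cannot do. I would invoke the geometry of the central fiber. By Remark~\ref{rem:vertical-branch} and Polizzi's identification, the relatively minimal fiber over $s=0$ is $W_i+Z_i$: two smooth elliptic curves meeting transversely in one point, with the total space smooth there. This is a Lefschetz singular fiber with a single node, locally $xy=s$. The Picard--Lefschetz formula then gives local monodromy equal to one positive Dehn twist about the vanishing cycle, and that vanishing cycle separates the two elliptic components.

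It remains to show the vanishing cycle is isotopic to $d_i$. The Picard--Lefschetz monodromy commutes with the fiberwise hyperelliptic involution (Proposition~\ref{prop:polizzi-branch}), so it is a symmetric mapping class. Its image in $\Mod(S^2,6)$ is the spherical monodromy computed above, $T_{\delta_i}^2$. Hence it is one of $t_{d_i}$ or $\iota t_{d_i}$. The element $\iota t_{d_i}$ is not a single Dehn twist. One way to see this is that $\iota$ acts as $-1$ on $H_1$, so $\iota t_{d_i}$ acts nontrivially on homology, whereas a separating twist acts trivially. Therefore the monodromy is $t_{c}$ for a separating curve $c$ with $t_c=t_{d_i}$, and uniqueness of twist curves gives $c=d_i$.

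If the equality of images requires care with basepoints, I would restrict attention to a small disk about $0$. There the parallel transport of the branch points is exactly that of Lemma~\ref{lem:polizzi-scaling}, so the comparison is local.
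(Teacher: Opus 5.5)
Your proof is correct, and its skeleton agrees with the paper's. Both use the same Riemann--Hurwitz count showing that $d_i$ cuts $\Sigma_2$ into two one-holed tori, the same nodal model $xy=s$ at $W_i\cap Z_i$ with Picard--Lefschetz, and the same angular-degree-two computation giving $t_{d_i}\mapsto T_{\delta_i}^2$.

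The genuine difference is how you identify the Picard--Lefschetz vanishing cycle with $d_i$. The paper simply asserts that the neck separating $W_i$ from $Z_i$ ``is precisely $d_i$'' and then checks the projection. You instead argue as follows:
\begin{itemize}
\item you determine the whole Birman--Hilden fiber $\{t_{d_i},\iota t_{d_i}\}$ over $T_{\delta_i}^2$ and place the actual monodromy in it;
\item you exclude $\iota t_{d_i}$ because it acts by $-1$ on $H_1$, whereas a Dehn twist acts unipotently;
\item you conclude using the fact that a Dehn twist determines its curve up to isotopy.
\end{itemize}
This gives an honest proof of the one step the paper leaves to intuition. The central fiber is not obtained by naive degeneration of the six-point picture, as Remark~\ref{rem:vertical-branch} itself warns.

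The price is that you must make explicit that the genus-two monodromy covers the braid monodromy. That is where the weight of the argument lies. It is not the fact that the monodromy commutes with $\iota$, which is automatic in genus two. The covering property holds because over $0<|s|\ll1$ the genus-two family is literally the double cover of the ruling family branched along $B^\sharp$. The blowups at the $P_i$ and the canonical resolution are supported over $s=0$.

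Note also that your exclusion argument needs only that the local monodromy is a single Dehn twist. That the vanishing cycle is separating then follows a posteriori from $c=d_i$.
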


\begin{proof}
The curve $\delta_i$ separates the six branch points into two sets of three.  The double cover of a disk branched at three interior points is a one-holed torus: its Euler characteristic is $2-3=-1$, and its boundary is connected because the number of branch points is odd.  Consequently $q^{-1}(\delta_i)=d_i$ is connected and separates $\Sigma_2$ into two one-holed tori.

Over a punctured neighborhood of the distinguished value, the three branch points near $x_i$ undergo one positive full twist and the three near $y_i$ undergo another.  The boundaries of the two complementary three-punctured disks represent the same unoriented isotopy class $\delta_i$, and both motions have positive complex orientation.  Hence the spherical monodromy is $T_{\delta_i}^2$.

This does not by itself determine the Lefschetz lift, because the Birman--Hilden projection has the hyperelliptic involution in its kernel.  The lift is fixed by the local geometry of the singular fiber.  Polizzi's corresponding fiber is $W_i+Z_i$, where $W_i$ and $Z_i$ are smooth elliptic curves meeting transversely at one point.  Near that point the holomorphic map has, after analytic changes of coordinates, the ordinary nodal form
\[
xy=s.
\]
Picard--Lefschetz theory therefore gives one positive Dehn twist about the neck separating the two genus-one components.  This neck is precisely $d_i$.  Thus the geometric local monodromy is $t_{d_i}$.  Since the quotient map has angular degree two on an annular neighborhood of $d_i$, the twist $t_{d_i}$ projects to $T_{\delta_i}^2$.
\end{proof}

\subsection{The global monodromy relation}

Let
\[
\mathcal C=\{\tau_1,\ldots,\tau_6,\lambda_1,\ldots,\lambda_7\}\subset\mathbb P^1
\]
be the six simple ramification values and the seven values corresponding to the branch fibers $L_i$.  Choose a regular value $t_*$ and a distinguished geometric basis of positively oriented meridians
\[
\gamma_1,\ldots,\gamma_{13}
\]
for $\mathbb P^1\setminus\mathcal C$, with the product ordered according to the chosen star system:
\[
\gamma_1\cdots\gamma_{13}=1.
\]
Parallel transport identifies the smooth fibers with one labeled reference surface $\Sigma_2$ and gives
\[
\rho:\pi_1(\mathbb P^1\setminus\mathcal C,t_*)\longrightarrow\Mod(\Sigma_2).
\]
Put $M_k=\rho(\gamma_k)$.
Products are read with the rightmost mapping class acting first, and the
meridians and monodromy representation are chosen consistently with this
convention.

\begin{theorem}\label{thm:polizzi-geometric-factorization}
For the genus-two fibration determined by Polizzi's branch pair, and for
every distinguished basis chosen with the stated product convention,
\begin{equation}\label{eq:Polizzi-global-product}
M_1M_2\cdots M_{13}=1\qquad\text{in }\Mod(\Sigma_2).
\end{equation}
Exactly six factors are positive Dehn twists about nonseparating curves and exactly seven are positive Dehn twists about separating curves.  More precisely, the former are the lifts of the six simple ramification half twists of the three bisections, and the latter are the geometric lifts $t_{d_i}$ of the seven $3+3$ spherical monodromies $T_{\delta_i}^2$.  This is a geometric monodromy relation of type $(6,7)$; no explicit marked Hurwitz factorization is asserted.
\end{theorem}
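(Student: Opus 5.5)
The proof has three parts: the product relation, the identification of each local factor, and the count by type.

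\emph{Product relation.} The genus-two fibration $\widehat S\to\mathbb P^1$ is obtained from the canonical resolution and then made relatively minimal by contracting the seven exceptional curves $E_i$. Over $\mathbb P^1\setminus\mathcal C$ it is a smooth $\Sigma_2$-bundle, so parallel transport gives the representation $\rho$. The meridians satisfy $\gamma_1\cdots\gamma_{13}=1$ in $\pi_1(\mathbb P^1\setminus\mathcal C,t_*)$, and $\rho$ is a homomorphism for the stated composition convention. Applying $\rho$ gives \eqref{eq:Polizzi-global-product} for every distinguished basis. The only check needed is that the orientation conventions (rightmost acting first, positive meridians) agree, so that $\rho$ is not accidentally an anti-homomorphism. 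This part is formal.

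\emph{Local factors.} Each $M_k$ is conjugate, by the parallel transport along the tail of $\gamma_k$, to the local monodromy around the corresponding critical value. A conjugate of a positive Dehn twist is a positive Dehn twist about the image curve, and a diffeomorphism preserves whether a curve separates. So it suffices to identify the local monodromies.

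At a ramification value $\tau_j$, Proposition~\ref{prop:polizzi-six-nonseparating} gives the positive twist $t_{a_\tau}$ about a nonseparating curve. At a branch-fiber value $\lambda_i$, Proposition~\ref{prop:polizzi-separating-lift} gives the separating twist $t_{d_i}$, which lies over $T_{\delta_i}^2$. It remains to show that $\mathcal C$ contains no further critical values. By the discussion before Proposition~\ref{prop:polizzi-six-nonseparating}, the singularities of $B^\sharp$ lie on the $L_i$. Away from the $L_i$, the only non-transversalities of $B$ with the ruling are the six simple tangencies, and their values are pairwise distinct by Xiao's classification. Hence every other fiber is a smooth genus-two curve, and exactly $13$ critical values occur.

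\emph{Count and consistency check.} The factors are six nonseparating and seven separating twists, so the type is $(6,7)$. As a check, Proposition~\ref{prop:numerics} gives $\e=9$ and $\sig=-5$, so $\ch=1$ and $c_1^2=3$. These match $p_g=q=1$ and $K^2=3$, consistent with $S_E$ being minimal. This confirms that no vanishing cycle was missed.

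\emph{Main obstacle.} The delicate step is the lift at the $\lambda_i$. The spherical braid $T_{\delta_i}^2$ determines the genus-two monodromy only up to the hyperelliptic involution $\iota$, since both $t_{d_i}$ and $\iota t_{d_i}$ lie over it. I would settle this with Proposition~\ref{prop:polizzi-separating-lift}. After contracting $E_i$, the central fiber $W_i+Z_i$ has a single ordinary node, so the fibration is Lefschetz there and Picard--Lefschetz theory forces the lift $t_{d_i}$.

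For the same reason the ruled-surface description of the central fiber is not used directly (Remark~\ref{rem:vertical-branch}). Instead, the monodromy is computed on the punctured disk, where the resolved and the contracted fibrations agree.
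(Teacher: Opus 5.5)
Your proposal is correct and follows essentially the same route as the paper: apply $\rho$ to the meridian relation $\gamma_1\cdots\gamma_{13}=1$, read off the local factors from Propositions~\ref{prop:polizzi-six-nonseparating} and~\ref{prop:polizzi-separating-lift}, and obtain positivity from the holomorphic Picard--Lefschetz models. Your additional steps are the conjugation along tails, the check that $\mathcal C$ contains every critical value, and the Euler-characteristic consistency check; these only make explicit what the paper handles in the surrounding discussion.
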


\begin{proof}
The product relation follows by applying the monodromy representation $\rho$ to the standard product relation for a distinguished basis of meridians in the punctured sphere.  The local factors over the six $\tau_j$ are given by \cref{prop:polizzi-six-nonseparating}.  The local factors over the seven $\lambda_i$ are given by \cref{prop:polizzi-separating-lift}.  Positivity follows from the complex orientation and the holomorphic Picard--Lefschetz models.
\end{proof}

Thus local braid and Picard--Lefschetz theory applied to Polizzi's branch
model, together with Xiao's singular-fiber classification as recalled by
Polizzi, realizes
\[
(n,s)=(6,7)
\]
and explains the topology of all thirteen local factors.  If one suppresses the interleaving dictated by the chosen paths, the set of factors may be written schematically as
\[
\{t_{v_1},\ldots,t_{v_6},t_{d_1},\ldots,t_{d_7}\};
\]
this notation is \emph{not} an assertion that all six nonseparating twists occur before all seven separating twists in a global word.

\subsection{Numerical consistency checks}

The branch-derived type gives
\[
\e(S)=4-4\cdot2+(6+7)=9.
\]
Endo's genus-two hyperelliptic signature formula \cite{Endo} gives
\[
\sigma(S)=-\frac35\cdot6-\frac15\cdot7=-5,
\]
and therefore
\[
\chi_h(S)=\frac{9-5}{4}=1,
\qquad
c_1^2(S)=2\cdot9+3(-5)=3.
\]
These are exactly the invariants of the Xiao surface in Polizzi's model.  The calculation is a consistency check, not the proof of the monodromy: the geometric proof is \cref{thm:polizzi-geometric-factorization}.

\section{The fundamental group of Xiao's surface in Polizzi's model}
\label{sec:pi1-polizzi}

Polizzi's $E(3)$ model also determines the fundamental group directly,
without waiting for the thirteen vanishing cycles.  This gives
an independent check on any proposed topological construction of the same
surface.

\subsection{The ambient threefold}

Let
\[
\beta:E(3)\longrightarrow E
\]
be the Abel--Jacobi map.  As in the Catanese--Ciliberto construction and in
Polizzi's notation,
\[
E(3)=\mathbb P(\mathcal E_3),
\]
where $\mathcal E_3$ is the Atiyah bundle of rank $3$ and degree $1$ on the
elliptic curve $E$ \cite{Atiyah,CataneseCiliberto,Polizzi}.  We use the
quotient projectivization convention.  Its fiber is $\mathbb P^2$.
Consequently the homotopy
exact sequence of the locally trivial projective bundle gives
\begin{equation}\label{eq:pi1-E3}
\pi_1(E(3))
\cong
\pi_1(E)
\cong
\mathbb Z^2.
\end{equation}
This agrees with the fact that the Albanese variety of every surface in the
family is $E$.

\subsection{Ampleness of the class \texorpdfstring{$4D-F$}{4D-F}}

Let $D$ denote the tautological divisor class and $F$ the fiber class of
$\beta$, so $F=\beta^*(p)$ for a point $p\in E$.  The Catanese--Ciliberto construction gives
\[
K_{E(3)}\sim -3D+F
\]
and identifies the relevant canonical models with divisors algebraically
equivalent to $4D-F$ \cite{CataneseCiliberto}.  Here $4D-F$ denotes the
corresponding class in $\operatorname{NS}(E(3))$; a chosen line bundle
$\mathfrak D_0$ represents that class.  Polizzi's Theorem~7.1 then
identifies the genus-two-pencil member as the smooth divisor
\[
S_E=X_E\in|\mathfrak D_0|,
\qquad
\mathfrak D_0\equiv4D-F.
\]

\begin{lemma}\label{lem:ample-4D-F}
The divisor class $4D-F$ on $E(3)=\mathbb P(\mathcal E_3)$ is ample.
\end{lemma}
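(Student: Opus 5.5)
The plan is to compute the nef cone of $E(3)=\mathbb P(\mathcal E_3)$ and then check that $4D-F$ lies in its interior.

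\emph{Néron--Severi group.} Since $E$ has Picard number one, the projective-bundle formula gives $\operatorname{NS}(E(3))\otimes\mathbb R=\mathbb R D\oplus\mathbb R F$. The intersection numbers are $D^3=\deg\mathcal E_3=1$, $D^2F=1$, $DF^2=0$ and $F^3=0$ (quotient convention, with $\beta_*\mathcal O(D)=\mathcal E_3$). By Kleiman's criterion, the ample cone is the interior of the nef cone. The nef cone is a closed two-dimensional cone, so it is bounded by two rays.

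\emph{The first boundary ray is $F$.} The class $F=\beta^*(p)$ is nef. It is not ample, because $F^3=0$.

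\emph{The second boundary ray is $D-\tfrac13F$.}
\begin{enumerate}
\item By Atiyah's classification, $\mathcal E_3$ is indecomposable of rank $3$ and degree $1$, hence stable of slope $\mu=1/3$.
\item Let $m_3:E\to E$ be multiplication by $3$, which has degree $9$. Oda's description of pullbacks of Atiyah bundles gives $m_3^*\mathcal E_3\cong\bigoplus_{k=1}^3 M_k$ with $\deg M_k=3$. One could alternatively use the general fact that pullbacks of semistable bundles on curves stay semistable, but the split form is cleaner.
\item Fix a line bundle $M$ of degree $3$ on $E$. Then $m_3^*\mathcal E_3\otimes M^{-1}$ is a sum of degree-zero line bundles, so it is nef.
\item Consequently, on $\mathbb P(m_3^*\mathcal E_3)$, which is the fibre product with $E(3)$, the class $\widetilde D-\widetilde\beta^*M$ is nef.
\item Numerically, $\widetilde\beta^*M\equiv 3\widetilde F=\tfrac13\,\widetilde m^*F$, because $m_3^*(p)$ consists of $9$ points. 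Hence $\widetilde m^*\bigl(D-\tfrac13F\bigr)$ is nef.
\item Nefness descends along the finite surjective map $\widetilde m:\mathbb P(m_3^*\mathcal E_3)\to E(3)$, so $D-\tfrac13F$ is nef on $E(3)$.
\item This class is not ample, because $\bigl(D-\tfrac13F\bigr)^3=1-3\cdot\tfrac13=0$.
\end{enumerate}

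Since both $F$ and $D-\tfrac13F$ are nef but not ample, they span the nef cone exactly. Therefore $aD+bF$ is ample if and only if $a>0$ and $b>-a/3$.

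\emph{Conclusion.} Writing
\[
4D-F=4\bigl(D-\tfrac13F\bigr)+\tfrac13F
\]
exhibits $4D-F$ as a strictly positive combination of the two boundary rays. It therefore lies in the interior of the nef cone and is ample. As a sanity check, $(4D-F)^3=64-48=16>0$.

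The main obstacle is the nefness of $D-\tfrac13F$, which requires correctly importing Atiyah--Oda's description of $m_3^*\mathcal E_3$ and keeping the sign of the tautological class consistent with the quotient convention. If the splitting reference is awkward to cite, I would instead invoke Miyaoka's theorem: for a semistable bundle $V$ on a curve, $\mathcal O_{\mathbb P(V)}(1)-\mu(V)F$ is nef.
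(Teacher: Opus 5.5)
Your argument is correct, but it takes a different route from the paper's proof. The paper quotes Miyaoka's ample-cone criterion for projective bundles over curves (via Alzati--Bertolini--Besana): $aD+\beta^*M$ is ample if and only if $a>0$ and $\deg M+a\,\mu^-(\mathcal E_3)>0$. It then substitutes $\mu^-(\mathcal E_3)=1/3$, which comes from Atiyah's stability. You instead prove the relevant special case of that criterion directly:
\begin{itemize}
\item pulling back along multiplication by $3$ makes the slope integral and, by Oda, splits $\mathcal E_3$ into three line bundles of degree $3$;
\item the normalized tautological class is then visibly nef on $\mathbb P(m_3^*\mathcal E_3)$;
\item nefness descends along the finite surjective base change $\widetilde m$.
\end{itemize}
Your version is self-contained apart from Oda's splitting. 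It also identifies the whole nef cone, since the vanishing of $(D-\frac13F)^3$ shows the ray $D-\frac13F$ is sharp. The price is importing Oda's result; your fallback (semistability under \'etale pullback plus nefness of degree-zero semistable bundles) is essentially Miyaoka's theorem again. The paper's version is shorter and works for any bundle over a curve, using only the minimal Harder--Narasimhan slope. One small simplification: the ampleness conclusion does not need $F$ and $D-\frac13F$ to be non-ample. In a rank-two N\'eron--Severi space, any strictly positive combination of two non-proportional nef classes lies in the interior of the nef cone. It is therefore ample by Kleiman's criterion.
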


\begin{proof}
The Atiyah bundle $\mathcal E_3$ has rank $3$ and degree $1$.  It is
indecomposable, and since $\gcd(3,1)=1$ it is stable \cite{Atiyah}; hence
its minimal Harder--Narasimhan slope is
$\mu^{-}(\mathcal E_3)=1/3$.  We use Grothendieck's
quotient convention
$\mathbb P(\mathcal E_3)=\operatorname{Proj}(\operatorname{Sym}\mathcal E_3)$,
as in Polizzi, so that
$D=c_1(\mathcal O_{\mathbb P(\mathcal E_3)}(1))$.  Miyaoka's ample-cone
criterion for a projective bundle over a curve, quoted in
\cite[Theorem~2.1]{AlzatiBertoliniBesana}, says that a divisor
\[
aD+\beta^*M,\qquad \deg M=b,
\]
is ample precisely when
\[
a>0,\qquad b+a\,\mu^{-}(\mathcal E_3)>0.
\]
For $4D-F$ one has $a=4$ and $b=-1$, so
\[
-1+\frac43=\frac13>0.
\]
Hence $4D-F$ is ample.
\end{proof}

As a check,
\[
D^3=\deg\mathcal E_3=1,\qquad D^2F=1,\qquad F^2=0,
\]
so
\[
(4D-F)^3=64-48=16>0.
\]
Adjunction also gives
\[
K_{S_E}
=(K_{E(3)}+S_E)|_{S_E}
=D|_{S_E}
\]
and therefore
\[
K_{S_E}^2
=D^2(4D-F)=4-1=3,
\]
as required.

\subsection{The fundamental group of \texorpdfstring{$S_E$}{S E}}

\begin{theorem}\label{thm:pi1-polizzi}
For every surface in Xiao's one-dimensional $K^2=3$ family described by
Polizzi,
\[
\pi_1(S_E)\cong\mathbb Z^2.
\]
More precisely, the inclusion
\[
j:S_E\hookrightarrow E(3)
\]
induces an isomorphism
\[
j_*:\pi_1(S_E)\xrightarrow{\;\cong\;}\pi_1(E(3)),
\]
and under \eqref{eq:pi1-E3} this is the fundamental-group map induced by
the Albanese morphism.
\end{theorem}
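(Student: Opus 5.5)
The plan is to apply the Lefschetz hyperplane theorem to the ample smooth divisor $S_E\subset E(3)$ and then identify the resulting map with the Albanese map.

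First, by \cref{lem:ample-4D-F} the class $4D-F$ is ample. Hence the smooth divisor $S_E=X_E\in|\mathfrak D_0|$ is an ample hypersurface in the smooth projective threefold $E(3)$. Ampleness depends only on the numerical class, so the choice of representative $\mathfrak D_0$ is irrelevant.

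Second, we invoke the Lefschetz hyperplane theorem in its homotopy form (Bott, or Andreotti--Frankel together with Morse theory). The complement $E(3)\setminus S_E$ is an affine variety of complex dimension $3$. It therefore has the homotopy type of a CW complex of real dimension at most $3$. Dually, $E(3)$ is obtained from $S_E$, up to homotopy, by attaching cells of real dimension at least $3$. Consequently the pair $(E(3),S_E)$ is $2$-connected, and $j_*:\pi_1(S_E)\to\pi_1(E(3))$ is an isomorphism; surjectivity already holds in dimension two. Combined with \eqref{eq:pi1-E3}, this gives $\pi_1(S_E)\cong\mathbb Z^2$.

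Third, we identify the map. The composition $\beta\circ j:S_E\to E$ induces $\beta_*\circ j_*$ on $\pi_1$, and $\beta_*$ is the isomorphism of \eqref{eq:pi1-E3}. The inclusion $\beta\circ j$ is the restriction of the Abel--Jacobi map, which in the Catanese--Ciliberto/Polizzi model is the genus-three Albanese fibration $S_E\to E$. This is consistent with $q(S_E)=1$ and with $\operatorname{Alb}(S_E)=E$. A morphism $S_E\to E$ inducing an isomorphism on $\pi_1$, hence on $H_1$, must coincide with the Albanese map up to translation and an isomorphism of $E$. So $j_*$, read through \eqref{eq:pi1-E3}, is the Albanese-induced map.

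The main obstacle is ensuring that the Lefschetz theorem applies in exactly the needed form. It requires $S_E$ to be smooth and to be an honest effective ample divisor, not merely a member of an ample numerical class, and it requires the form of the theorem for ample divisors in an arbitrary smooth projective variety rather than hyperplane sections of $\mathbb P^N$. For the latter, I would pass to a multiple $m(4D-F)$ that is very ample, embed $E(3)$ projectively, and note that $S_E$ itself need not be a hyperplane section. Bott's version handles this directly, since $E(3)\setminus S_E$ is affine whenever $S_E$ is the support of an ample divisor, and affine $n$-folds have homotopy dimension at most $n$ by Andreotti--Frankel. Smoothness of $S_E$ is quoted from Polizzi's Theorem~7.1.
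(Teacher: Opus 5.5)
Your proposal is correct and follows the paper's own proof. Both take smoothness from Polizzi's Theorem~7.1 and ampleness from Lemma~\ref{lem:ample-4D-F}, apply Bott's homotopy Lefschetz theorem to get $\pi_i(E(3),S_E)=0$ for $i<3$, and conclude since $\beta_*$ is an isomorphism ($\beta$ has fiber $\mathbb P^2$); your universal-property argument that $\beta\circ j$ is the Albanese map is a harmless supplement to the paper's direct use of $\alpha=\beta|_{S_E}$.
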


\begin{proof}
Polizzi proves in \cite[Theorem~7.1]{Polizzi} that the divisor
$X_E\subset E(3)$ associated with a surface in the genus-two-fibration locus
is smooth.
By \cref{lem:ample-4D-F}, $X_E$ is a smooth ample divisor in the smooth
complex projective threefold $E(3)$.  Thus
$\mathcal O_{E(3)}(X_E)$ is a positive line bundle and $X_E$ is the
nonsingular zero locus of a section.  Bott's homotopy form of the
topological Lefschetz hyperplane theorem
\cite[Theorem~I]{BottLefschetz} gives
\[
\pi_i(E(3),X_E)=0\qquad (i<3).
\]
The relative homotopy exact sequence therefore gives an isomorphism
\[
\pi_1(X_E)\xrightarrow{\cong}\pi_1(E(3)).
\]
Equation \eqref{eq:pi1-E3} identifies the latter group with
$\pi_1(E)\cong\mathbb Z^2$.  Since the fiber of
$\beta:E(3)\to E$ is $\mathbb P^2$, the map $\beta_*$ is an isomorphism.
Moreover $\alpha=\beta|_{S_E}$, so the displayed isomorphism is precisely
the map induced by the Albanese morphism.
\end{proof}

The cusp calculation has the following explicit homological normal form.
For a primitive vector $(a,b)$ in the basis $(p,q)$, write
\[
T_{(a,b)}=
\begin{pmatrix}
1-4ab&4a^2\\
-4b^2&1+4ab
\end{pmatrix}.
\]
Direct multiplication gives
\[
T_{(1,0)}T_{(1,-1)}T_{(0,1)}T_{(1,2)}T_{(1,1)}T_{(2,1)}=I.
\]
The six slopes reduce to the six elements of
$\mathbb P^1(\mathbb Z/4)$.  This identity is only a normal form in
$\operatorname{Aut}(K_0)$; it neither identifies six marked curves on
$\Sigma_2$ nor determines the seven separating cycles or their
interleaving in a Hurwitz factorization.

\begin{remark}
Polizzi does not state this fundamental-group computation explicitly; the
argument above is a consequence of his realization of the surface as the
smooth divisor $X_E\subset E(3)$ together with the classical Lefschetz
hyperplane theorem.  In particular, the conclusion is stronger than the
formal consequence $q=1\Rightarrow b_1=2$: it determines the full
nonabelian fundamental group and shows that there is no additional finite
kernel over the Albanese group.
\end{remark}

\subsection{The fundamental-group quotient from the genus-two pencil}

It is useful to record the van Kampen quotient because it separates the
relations supplied by the Lefschetz thimbles from the additional information
supplied by the Albanese morphism.
Polizzi's pencil $|G|$ is base-point free, so it is an honest genus-two
fibration
\[
f_G:S_E\longrightarrow\mathbb P^1
\]
with thirteen singular fibers.  Choose a regular value $b_0$, a marking
\[
F_{b_0}\cong\Sigma_2,
\]
and distinguished paths to the thirteen critical values.  Let
\[
\Gamma_2=
\langle a_1,b_1,a_2,b_2\mid
[a_1,b_1][a_2,b_2]=1\rangle
\]
and let
\[
\delta_1,\ldots,\delta_{13}\in\Gamma_2
\]
be based representatives of the vanishing cycles.  Choose a disk containing
all critical values.  Its inverse image is obtained from
$\Sigma_2\times D^2$ by attaching the Lefschetz $2$-handles, so its
fundamental group is generated by the regular fiber and the thimbles kill
the normal closure of the vanishing cycles.  After choosing a bundle
trivialization over the complementary disk, its inverse image is
$\Sigma_2\times D^2$.  The boundary gluing may carry a pointed clutching
element, which can add relations but no generators.  Van Kampen therefore
shows that inclusion of the regular fiber surjects onto $\pi_1(S_E)$.
Hence there is a
natural epimorphism
\begin{equation}\label{eq:pi1-polizzi-presentation}
\Gamma_2/
\langle\!\langle
\delta_1,\ldots,\delta_{13}
\rangle\!\rangle
\twoheadrightarrow
\pi_1(S_E)\cong\mathbb Z^2.
\end{equation}
Equality would follow, for example, from control of the pointed clutching
class or from a suitable section of the genus-two fibration; neither is
assumed here.  After
abelianization, \eqref{eq:pi1-polizzi-presentation} gives a surjection from
the quotient of $H_1(\Sigma_2;\mathbb Z)$ by the six nonseparating vanishing
classes onto $H_1(S_E;\mathbb Z)\cong\mathbb Z^2$.  The stronger lattice
statement used below comes instead from the restriction of the Albanese
morphism to a regular fiber.

\subsection{A fundamental-group check for alternative constructions}

Suppose an alternative construction produces a genus-two fibration $Y$ of
type $(6,7)$.  Equality of numerical invariants is not enough.  A necessary
condition for $Y$ to be Xiao's $K^2=3$ surface is
\[
\pi_1(Y)\cong\mathbb Z^2.
\]
Thus the algebraic computation in \cref{thm:pi1-polizzi} gives a strong
independent test on any replacement construction.  If a proposed
cut-and-paste description has a computable complement presentation, its
fundamental group should be compared with this target before attempting the
finer branch-pair identification.

\begin{lemma}\label{lem:polizzi-disk-piece}
Let $D\subset S^2$ be a closed disk containing all thirteen critical values
and put $Y=f^{-1}(D)$.  If $v_1,\ldots,v_{13}$ are the vanishing cycles in a
regular boundary fiber $F$, then
\[
\pi_1(Y)\cong
\pi_1(F)/\langle\!\langle v_1,\ldots,v_{13}\rangle\!\rangle,
\]
and filling by the complementary disk induces an epimorphism
\[
\pi_1(Y)\twoheadrightarrow\pi_1(S_E)\cong\mathbb Z^2.
\]
\end{lemma}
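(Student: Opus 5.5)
The lemma has two parts, handled separately: a handlebody description of $Y$, then the effect of closing off with the complementary disk.

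\emph{Step 1: handle decomposition of $Y$.} Choose a regular base point $b_0\in\partial D$ and distinguished paths in $D$ from $b_0$ to the thirteen critical values. Each Lefschetz critical point is modelled locally on $(z_1,z_2)\mapsto z_1^2+z_2^2$, or equivalently $xy=s$ as in \cref{prop:polizzi-separating-lift}. The standard Lefschetz handle argument then identifies $Y$, up to diffeomorphism, with $F\times D^2$ with thirteen $2$-handles attached along copies of the vanishing cycles $v_k$ in fibers over distinct boundary points, with the fiber framing $-1$. Since there are no $1$-handles and no base-point issues beyond conjugation, van Kampen applied handle by handle gives
\[
\pi_1(Y)\cong\pi_1(F\times D^2)/\langle\!\langle v_1,\ldots,v_{13}\rangle\!\rangle\cong\pi_1(F)/\langle\!\langle v_1,\ldots,v_{13}\rangle\!\rangle .
\]
To justify that each $2$-handle kills exactly the based class of its vanishing cycle, I would transport the thimble along its distinguished path. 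Changing the path or the basing changes $v_k$ only by conjugation in $\pi_1(F)$, so the normal closure is well defined. This is the content already sketched before \eqref{eq:pi1-polizzi-presentation}.

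\emph{Step 2: the epimorphism.} Write $S_E=Y\cup f^{-1}(D')$, where $D'=S^2\setminus\operatorname{int}D$. Since $D'$ contains no critical values, $f^{-1}(D')\cong F\times D'\simeq F$, and the intersection is $\partial Y=f^{-1}(\partial D)$, an $F$-bundle over a circle. Van Kampen gives
\[
\pi_1(S_E)\cong\pi_1(Y)*_{\pi_1(\partial Y)}\pi_1(F\times D').
\]
The map $\pi_1(\partial Y)\to\pi_1(F\times D')$ is surjective, because the fiber $F$ already lies in $\partial Y$. Hence every element of $\pi_1(F\times D')$ is the image of an element of $\pi_1(\partial Y)$, and therefore of $\pi_1(Y)$. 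So $\pi_1(Y)\to\pi_1(S_E)$ is onto. The filling only adds the relation coming from the circle direction of $\partial Y$, namely the pointed clutching class, and adds no generators. Finally, \cref{thm:pi1-polizzi} identifies the target with $\mathbb Z^2$.

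The only delicate point is the basing in Step 1. One needs the vanishing cycles as based loops in a single fiber $F$, via the chosen distinguished paths, so that the normal closure is independent of the choices. The rest is formal.
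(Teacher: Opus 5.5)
Your proof is correct and follows the same route as the paper. The first isomorphism comes from $F\times D^2$ with thirteen Lefschetz $2$-handles plus van Kampen. The epimorphism comes from van Kampen for $S_E=Y\cup(F\times D')$ together with \cref{thm:pi1-polizzi}. Your observation that $\pi_1(\partial Y)\to\pi_1(F\times D')$ is onto is exactly the step the paper leaves implicit in ``follows from van Kampen.''

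One side remark needs correcting. Changing a distinguished path can replace $v_k$ by a Hurwitz-moved curve such as $t_{v_j}(v_k)$, which is not a conjugate of $v_k$ in $\pi_1(F)$. The normal closure is still independent of choices, but for a different reason: your isomorphism is induced by inclusion, so the normal closure is the kernel of $\pi_1(F)\to\pi_1(Y)$.
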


\begin{proof}
A Lefschetz fibration over a disk is obtained from $F\times D^2$ by attaching
one $2$-handle for each vanishing cycle, with Lefschetz framing.  Van Kampen
therefore kills the normal closure of the vanishing cycles and introduces no
new generators.  Filling by the complementary disk recovers $S_E$, so the
second assertion follows from van Kampen and \cref{thm:pi1-polizzi}.
\end{proof}

\section{The Albanese-kernel lattice of Xiao's
\texorpdfstring{$K^2=3$}{K2=3} pencil}
\label{sec:polizzi-vanishing-lattice}

Let $\Sigma$ be a regular genus-two fiber and let
\[
v_1,\ldots,v_6\in H_1(\Sigma;\mathbb Z)
\]
be the homology classes of the six nonseparating vanishing cycles.
Let
\[
h=\alpha|_\Sigma:\Sigma\longrightarrow E,
\qquad
K_0=\ker\{h_*:H_1(\Sigma;\mathbb Z)\to H_1(E;\mathbb Z)\}.
\]
The following calculation uses only the degree-four Albanese map and the
global monodromy relation; it does not require an explicit word for the six
vanishing cycles.

\begin{lemma}\label{lem:primitive-complement}
Let $H$ be a unimodular symplectic lattice of rank four and let
$L\subset H$ be a primitive nondegenerate sublattice of rank two.  Then
$L^\perp$ is primitive, and
\[
|\det L^\perp|=|\det L|.
\]
\end{lemma}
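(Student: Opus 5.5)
The plan is to use unimodularity of the symplectic form $\omega$ on $H$ to identify $H$ with its dual $H^*=\operatorname{Hom}(H,\mathbb Z)$ via $x\mapsto\omega(x,\cdot)$. The complement is then a kernel:
\[
L^\perp=\ker\bigl(r:H\to L^*\bigr),\qquad r(x)=\omega(x,\cdot)|_L.
\]

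\emph{Step 1 (primitivity).} $L^\perp$ is the kernel of a homomorphism into the torsion-free group $L^*$, so it is saturated. If $nx\in L^\perp$ with $n\neq0$, then $n\,\omega(x,\ell)=0$ for all $\ell\in L$, hence $\omega(x,\ell)=0$ and $x\in L^\perp$. So $L^\perp$ is primitive.

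\emph{Step 2 (rank and surjectivity of $r$).} Since $L$ is primitive, $H/L$ is free, so $L$ is a direct summand. Every homomorphism $L\to\mathbb Z$ therefore extends to $H$, and by unimodularity it is of the form $\omega(x,\cdot)$. Hence $r$ is surjective, giving a short exact sequence
\[
0\to L^\perp\to H\xrightarrow{r}L^*\to0,
\]
and $L^\perp$ has rank two. Nondegeneracy of $L$ gives $L\cap L^\perp=0$ over $\mathbb Q$, and hence $L\oplus L^\perp$ has finite index in $H$.

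\emph{Step 3 (determinant comparison).} This is the main point. I would compute the index $[H:L\oplus L^\perp]$ in two ways.

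First, consider the restriction of $r$ to $L$. This is the map $L\to L^*$ given by the Gram matrix of $L$. Since $L^\perp=\ker r$, the map $r$ induces an isomorphism
\[
H/(L\oplus L^\perp)\cong L^*/r(L),
\]
whose order is $|\det L|$.

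Second, apply the same argument with the roles exchanged. Step 1 shows $L^\perp$ is primitive. Nondegeneracy of $L^\perp$ follows from $L\cap L^\perp=0$ together with the rational splitting $H_{\mathbb Q}=L_{\mathbb Q}\oplus L^\perp_{\mathbb Q}$: a vector of $L^\perp$ orthogonal to $L^\perp$ is also orthogonal to $L$, hence lies in the radical of $H$, which is zero. So Step 2 applies to $L^\perp$ and gives
\[
[H:L^\perp\oplus(L^\perp)^\perp]=|\det L^\perp|.
\]
It remains to show $(L^\perp)^\perp=L$. Clearly $L\subset(L^\perp)^\perp$. Both lattices have rank two and both are primitive: $L$ by hypothesis, and $(L^\perp)^\perp$ by Step 1. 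Two primitive sublattices of the same rank, one contained in the other, coincide. Therefore
\[
|\det L|=[H:L\oplus L^\perp]=|\det L^\perp|.
\]

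The only delicate point is Step 2's use of primitivity to extend functionals from $L$ to $H$, which is exactly where the primitivity hypothesis enters. Everything else is formal linear algebra over $\mathbb Z$.
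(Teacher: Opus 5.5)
Your proof is correct and is essentially the paper's argument. Like the paper, you get primitivity of $L^\perp$ as the kernel of $H\to L^*$, and then identify the finite quotient $H/(L\oplus L^\perp)$ with the discriminant groups of $L$ and of $L^\perp$; you also make explicit the steps the paper leaves implicit, namely surjectivity of $r$ from primitivity of $L$ and the identity $(L^\perp)^\perp=L$ that justifies the symmetric computation.
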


\begin{proof}
Primitivity of $L^\perp$ follows because it is the kernel of the homomorphism
$H\to L^*$ induced by the intersection form.  The unimodular pairing embeds
$L\oplus L^\perp$ as a finite-index sublattice of $H$.  Comparing
determinants, or equivalently the two discriminant groups obtained from this
embedding, gives an isomorphism of their underlying finite groups and hence
the stated equality.
\end{proof}

\begin{lemma}\label{lem:gamma-four}
Let
\[
\Gamma(4)=\ker\{SL(2,\mathbb Z)\longrightarrow
SL(2,\mathbb Z/4)\}.
\]
Let $\overline\Gamma(4)$ be its image in $PSL(2,\mathbb Z)$.  This image is
torsion free.  The compact modular curve $X(4)$ has genus zero, and
$Y(4)=\overline\Gamma(4)\backslash\mathbb H$ is its complement of six
cusps.  Since $-I\notin\Gamma(4)$, the projection identifies
$\Gamma(4)$ with $\overline\Gamma(4)$.  The
cusps are indexed by the primitive vectors of $(\mathbb Z/4)^2$ up to sign,
namely by $\mathbb P^1(\mathbb Z/4)$, here defined as unimodular pairs
modulo multiplication by the units $\{\pm1\}$.  If $v=(a,b)$ is primitive, the
positive transvection for the alternating form $4J$ is a primitive positive
peripheral element at the cusp $[a:b]$.  If $e_1,\ldots,e_6$ denote the
oriented peripheral classes, then
\[
H_1(Y(4);\mathbb Z)=
\langle e_1,\ldots,e_6\mid e_1+\cdots+e_6=0\rangle .
\]
\end{lemma}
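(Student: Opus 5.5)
The plan is to verify the assertions of \cref{lem:gamma-four} one at a time, using only classical facts about principal congruence subgroups of level $4$.

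\emph{Torsion-freeness and $-I$.} Since $-I\equiv I\pmod 4$ would force $2\equiv0\pmod4$, we have $-I\notin\Gamma(4)$. Hence $\Gamma(4)\cap\{\pm I\}=\{I\}$, and the projection $\Gamma(4)\to\overline\Gamma(4)$ is an isomorphism. For torsion-freeness, recall that every torsion element of $PSL(2,\mathbb Z)$ is conjugate to the image of $S$ or of $ST$, of orders $2$ and $3$. A lift $g\in SL(2,\mathbb Z)$ of such an element satisfies $g^2=-I$ or $g^3=\pm I$, and has trace $0$ or $\pm1$. An element of $\Gamma(4)$ has trace $\equiv2\pmod4$, which excludes all three values.

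\emph{Genus and cusps.} The index is $[PSL(2,\mathbb Z):\overline\Gamma(4)]=|SL(2,\mathbb Z/4)|/2=48/2=24$. The cusps of $\Gamma(N)$ correspond to the $\Gamma(N)$-orbits on $\mathbb P^1(\mathbb Q)$, that is, to primitive vectors $(a,b)$ modulo $\Gamma(N)$ and sign. Since $\Gamma(N)$ acts transitively on lifts of each primitive residue class, these orbits are the classes $\pm(a,b)\in(\mathbb Z/N)^2$ of order $N$. For $N=4$ this gives $12/2=6$ classes, namely $\mathbb P^1(\mathbb Z/4)$ with its six points $[1:0],[0:1],[1:1],[1:2],[1:3],[2:1]$. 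Each cusp has width $4$, and indeed $6\cdot4=24$ checks the count. Riemann--Hurwitz for $X(4)\to X(1)$ then gives
\[
2g-2=24(-2)+\tfrac{24}{2}+\tfrac{2\cdot24}{3}+(24-6)=-48+12+16+18=-2,
\]
so $g=0$. The contributions $12$ and $16$ come from the order-$2$ and order-$3$ elliptic points, all of which ramify fully because the group is torsion free. Consequently $Y(4)$ is a sphere with six punctures.

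\emph{Peripheral generators.} The stabilizer of $\infty$ in $\overline\Gamma(4)$ is generated by $\begin{pmatrix}1&4\\0&1\end{pmatrix}$. This is the transvection $x\mapsto x+\langle x,v\rangle_{4J}\,v$ for $v=(1,0)$ when $4J$ is normalized as in the matrix $T_{(a,b)}$ above, and $T_{(1,0)}$ is exactly this matrix. For a general primitive $v=(a,b)$, choose $\gamma\in SL(2,\mathbb Z)$ with $\gamma(1,0)^T=v$. Conjugation by $\gamma$ normalizes $\Gamma(4)$ and preserves the form, so $T_v=\gamma T_{(1,0)}\gamma^{-1}$ is a primitive positive generator of the stabilizer of the cusp $[a:b]$. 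One checks that this agrees with the displayed formula for $T_{(a,b)}$. The standard presentation of the fundamental group of a sphere with six punctures, $\langle e_1,\dots,e_6\mid e_1\cdots e_6=1\rangle$, then abelianizes to the stated description of $H_1(Y(4);\mathbb Z)$.

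The main obstacle is the bookkeeping of orientation: one must show that ``positive transvection'' for $4J$ corresponds to the positively oriented loop around each cusp, and that the sign convention is the same at all six cusps. Conjugating by elements of $SL(2,\mathbb Z)$, which preserve orientation, transports the convention from $\infty$ to every other cusp, and I would argue the consistency that way.
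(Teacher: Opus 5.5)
Your proposal is correct and follows essentially the same route as the paper: index $24$, six cusps from the $\pm$-classes of order-$4$ vectors in $(\mathbb Z/4)^2$, each of width four, the genus formula giving $X(4)\cong\mathbb P^1$, conjugation of $\left(\begin{smallmatrix}1&4\\0&1\end{smallmatrix}\right)$ to obtain $T_{(a,b)}$, and abelianization of the six-punctured-sphere group. Your trace argument for torsion-freeness and your $SL(2,\mathbb Z)$-conjugation argument for consistent peripheral orientations fill in two points that the paper only asserts.
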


\begin{proof}
The reduction map $SL(2,\mathbb Z)\to SL(2,\mathbb Z/4)$ is surjective and
$|SL(2,\mathbb Z/4)|=48$.  Since $-I\notin\Gamma(4)$, the image
$\overline\Gamma(4)\subset PSL(2,\mathbb Z)$ has index $24$ and is torsion
free.  Its cusps are represented by primitive vectors of
$(\mathbb Z/4)^2$ modulo sign, giving $(16-4)/2=6$ classes.  The stabilizer
of infinity is generated by
$\left(\begin{smallmatrix}1&4\\0&1\end{smallmatrix}\right)$, and normality
shows that every cusp has width four.  The standard genus formula gives
$X(4)\cong\mathbb P^1$; see
\cite[\S\S2.1--2.4 and~3.1]{DiamondShurman}.  Thus
$\pi_1(Y(4))\cong\overline\Gamma(4)\cong\Gamma(4)$.

If $A\in SL(2,\mathbb Z)$ has first column $(a,b)^T$, then
\[
A\begin{pmatrix}1&4\\0&1\end{pmatrix}A^{-1}
=\begin{pmatrix}1-4ab&4a^2\\-4b^2&1+4ab\end{pmatrix}.
\]
This is exactly the transvection for the form $4J$ in the convention used
below.  We orient each peripheral loop so that this primitive stabilizer is
positive.  The final presentation is the first homology of a six-punctured
sphere.
\end{proof}

\begin{theorem}\label{thm:polizzi-vanishing-lattice}
The Albanese kernel $K_0$ is primitive of rank two.  In a suitable basis
$p,q$ its
intersection form is
\[
p\cdot q=4,
\qquad
\left.K_0\right|_{\cdot}\cong
\begin{pmatrix}0&4\\-4&0\end{pmatrix}.
\]
Moreover, after reordering the six nonseparating vanishing cycles, their
classes lie in $K_0$, and their primitive slopes in $K_0$ reduce modulo $4$
to the six distinct elements of
$\mathbb P^1(\mathbb Z/4)$:
\[
[1:0],\ [0:1],\ [1:1],\ [1:2],\ [1:3],\ [2:1].
\]
Here a matching path means a path whose two transported vanishing cycles
agree as unoriented curves.  In particular, no two of the six nonseparating vanishing cycles are
homologous up to sign.  Hence no two of the six nonseparating critical
points can be joined by a matching path.
\end{theorem}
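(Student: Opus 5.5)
The plan is to identify $K_0$ as the orthogonal complement of the pulled-back Albanese lattice, and then detect the six cusps through the abelianization of $\Gamma(4)$. The key steps, in order, are as follows.

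\emph{Step 1: $h_*$ is surjective.} The composite $H_1(\Sigma;\mathbb Z)\to H_1(S_E;\mathbb Z)\to H_1(E;\mathbb Z)$ equals $h_*$, since $h=\alpha|_\Sigma$. The first map is onto by the van Kampen epimorphism \eqref{eq:pi1-polizzi-presentation} after abelianizing. The second map is an isomorphism by \cref{thm:pi1-polizzi}, which says the Albanese morphism induces $\pi_1(S_E)\cong\pi_1(E)$. Hence $h_*$ is surjective, and $K_0$ has rank two.

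\emph{Step 2: $K_0$ is primitive with form $4J$.} Let $L\subset H_1(\Sigma;\mathbb Z)$ be the Poincaré dual of $h^*H^1(E;\mathbb Z)$.
\begin{enumerate}[label=(\roman*)]
\item Since $h_*$ is surjective, $h^*$ has primitive image. Hence $L$ is primitive.
\item For classes in $L$ the intersection form is multiplied by $\deg h=4$. So $L\cong 4J$ and $|\det L|=16$.
\item By the projection formula, $K_0=\ker h_*=L^\perp$.
\end{enumerate}
\Cref{lem:primitive-complement} then shows that $K_0$ is primitive of rank two with $|\det K_0|=16$. A rank-two alternating lattice of determinant $16$ has the form $\left(\begin{smallmatrix}0&4\\-4&0\end{smallmatrix}\right)$ in a suitably oriented basis $p,q$.

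\emph{Step 3: the vanishing classes lie in $K_0$ and give the normal-form transvections.}
\begin{enumerate}[label=(\roman*)]
\item Each $v_i$ bounds a Lefschetz thimble in $S_E$. So $v_i$ dies in $H_1(S_E)\cong H_1(E)$, and hence $h_*(v_i)=0$.
\item Each $v_i$ is nonseparating, so it is primitive in $H_1(\Sigma)$. Because $K_0$ is primitive, $v_i$ is also primitive in $K_0$.
\item Parallel transport commutes with the global Albanese map, so the monodromy preserves $K_0$ and acts on it through $\operatorname{Sp}(K_0)\cong SL(2,\mathbb Z)$.
\item The Picard--Lefschetz formula $x\mapsto x+(x\cdot v_i)v_i$, written with $v_i=(a,b)$ in the basis $(p,q)$ and the form $4J$, gives exactly the matrix $T_{(a,b)}$ (up to the sign convention fixed in \cref{lem:gamma-four}).
\item The seven separating twists act trivially on homology.
\end{enumerate}
Therefore \cref{thm:polizzi-geometric-factorization} yields a relation $\prod_{i=1}^6 T_{v_i}=I$ in $\Gamma(4)$, with the factors taken in some order.

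\emph{Step 4: the six cusps.} By \cref{lem:gamma-four}, each $T_{(a,b)}$ is a positive primitive peripheral element at the cusp $[a:b]$. Its image in $H_1(Y(4);\mathbb Z)=\Gamma(4)^{\mathrm{ab}}$ is therefore $e_{[a:b]}$, independent of conjugation and of the order of the factors. Let $n_c$ be the number of $v_i$ at cusp $c$, so $\sum_c n_c=6$. The relation gives $\sum_c n_c e_c=0$. In $\mathbb Z^6/\mathbb Z(1,\dots,1)$ this forces all $n_c$ to be equal, so $n_c=1$ for every $c$. Hence the slopes reduce to the six elements of $\mathbb P^1(\mathbb Z/4)$.

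\emph{Step 5: no matching paths.} Distinct cusps imply that no two $v_i$ are homologous up to sign. A matching path would transport one vanishing cycle to the other as the same unoriented curve. This would make them homologous up to sign in $K_0$, because transport acts on $K_0$ by the monodromy, which fixes each reduction class modulo $4$. So no two of the six nonseparating critical points can be joined by a matching path.

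The step I expect to be most delicate is Step 3, specifically the sign and orientation bookkeeping. One must check that the Picard--Lefschetz transvection for $v_i=(a,b)$ is precisely the positive $T_{(a,b)}$ of \cref{lem:gamma-four}, rather than its inverse. A mixed orientation would break the counting argument in Step 4. I would settle it by computing directly at $v=p$ with $p\cdot q=4$.
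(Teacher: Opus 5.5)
Your proposal is correct and follows the paper's proof essentially step for step. It obtains surjectivity of $h_*$ from the van Kampen epimorphism and Theorem~\ref{thm:pi1-polizzi}, identifies $K_0=L^\perp$ for the Gysin lattice $L\cong 4J$ via Lemma~\ref{lem:primitive-complement}, restricts the global relation to $\operatorname{Aut}(K_0)$, counts cusps in $H_1(Y(4);\mathbb Z)$, and uses the fact that monodromy acts on $K_0$ through $\Gamma(4)$, so that path choices cannot change cusp labels.

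The sign issue you flag in Step~3 is harmless. All six factors are positive twists given by one uniform formula, and the relation $\sum_c n_c e_c=0$ is unaffected if every $e_c$ is replaced by $-e_c$. In Step~5, a matching path gives equal cusp labels (congruence modulo $4$ up to sign), not literal homology of the originally marked classes, and this is already enough for the contradiction.
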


\begin{proof}
The disk-decomposition and van Kampen argument preceding
\eqref{eq:pi1-polizzi-presentation} shows that inclusion of a regular fiber
surjects on the fundamental group of the total space.  This uses no
section; the possible pointed-clutching relation affects the kernel, not
surjectivity.  By
Theorem~\ref{thm:pi1-polizzi},
\[
\pi_1(S_E)\cong\pi_1(E)\cong\mathbb Z^2,
\]
and the isomorphism is induced by the Albanese morphism
$\alpha:S_E\to E$.  Hence $h$ induces a surjection on $H_1$.  Since a
genus-two fiber meets a general
Albanese fiber in four points, $\deg h=4$ by
\cite[Proposition~6.18]{Polizzi}.

Each $v_i$ bounds its Lefschetz thimble in $S_E$.  Since
$h=\alpha\circ\iota_\Sigma$, where $\iota_\Sigma$ is the fiber inclusion,
it follows that $h_*v_i=0$ and hence $v_i\in K_0$.

Let
\[
h^!:H_1(E;\mathbb Z)\longrightarrow H_1(\Sigma;\mathbb Z)
\]
denote the homological Gysin, or Umkehr, map
\[
h^!=\operatorname{PD}_{\Sigma}^{-1}\circ h^*\circ\operatorname{PD}_E.
\]
The adjunction formula gives
\[
x\cdot h^!y=h_*x\cdot y
\]
and
\[
h^!y_1\cdot h^!y_2
=4(y_1\cdot y_2).
\]
Because $h_*$ is surjective, its Smith normal form has two unit invariant
factors.  The adjoint map $h^!$ is obtained from the transpose of $h_*$ by
multiplication with unimodular symplectic matrices; hence
$h^!H_1(E;\mathbb Z)$ is a primitive rank-two sublattice.  Write
\[
L=h^!H_1(E;\mathbb Z).
\]
Then $L$ has intersection matrix $4J$, where
$J=\left(\begin{smallmatrix}0&1\\-1&0\end{smallmatrix}\right)$, and
\[
K_0=\ker h_*=L^\perp.
\]
The last equality follows directly from the adjunction identity.

By \cref{lem:primitive-complement},
\[
|\det K_0|=|\det L|=16.
\]
A rank-two alternating integral lattice has matrix
$\left(\begin{smallmatrix}0&m\\-m&0\end{smallmatrix}\right)$ in a basis,
so $m^2=16$.  Reversing one basis vector if necessary gives $m=4$.

Choose a basis $p,q$ of $K_0$ with $p\cdot q=4$.  A nonseparating simple
closed curve represents a primitive class in $H_1(\Sigma;\mathbb Z)$.
Since $K_0$ is primitive in $H_1(\Sigma;\mathbb Z)$, each $v_i$ is also
primitive in $K_0$.  Thus, if $v=ap+bq$ is one of the six vanishing classes,
then $\gcd(a,b)=1$, and the Picard--Lefschetz transvection restricted to
$K_0$ has matrix
\[
T_v=
\begin{pmatrix}
1-4ab&4a^2\\
-4b^2&1+4ab
\end{pmatrix}.
\]
Thus $T_v\in\Gamma(4)\subset SL(2,\mathbb Z)$, and because $(a,b)$ is
primitive it is a primitive positive parabolic: it is conjugate in
$SL(2,\mathbb Z)$ to
$\left(\begin{smallmatrix}1&4\\0&1\end{smallmatrix}\right)$.
Indeed, choose $A\in SL(2,\mathbb Z)$ with first column $(a,b)^T$.
Normality of $\Gamma(4)$ gives
$A\left(\begin{smallmatrix}1&4\\0&1\end{smallmatrix}\right)A^{-1}
\in\Gamma(4)$, while the stabilizer in $\Gamma(4)$ of the cusp represented
by $(a,b)$ is the infinite cyclic group generated by this element.  Thus
$T_v$ is primitive in $\Gamma(4)$ itself, not merely in $SL(2,\mathbb Z)$.

Over the regular locus, the maps $\alpha|_{\Sigma_t}$ assemble into a map
of local systems to the constant system $H_1(E;\mathbb Z)$.  Their kernels
therefore form a monodromy-invariant rank-two local subsystem, so the
monodromy preserves $K_0$.  Equivalently, each nonseparating transvection
preserves $K_0$ because its defining vector lies there, while the seven
separating twists act trivially on first homology.  Retain the relative
order of the six nonseparating factors after deleting the seven homologically
trivial factors.  The global genus-two monodromy relation then restricts to
the ordered product in $\operatorname{Aut}(K_0)$:
\[
T_{v_1}T_{v_2}\cdots T_{v_6}=I.
\]
By \cref{lem:gamma-four}, these six transformations are positive primitive
peripheral elements of the six-punctured sphere $Y(4)$.  In its
abelianization the only relation among the oriented peripheral classes is
$e_1+\cdots+e_6=0$.
For a fixed cusp, its stabilizer in $\Gamma(4)$ is infinite cyclic,
generated by the corresponding width-four positive parabolic.  Since each
$T_{v_i}$ is primitive and positive, it is that generator after a choice
of peripheral path.  The positive sign is fixed by the complex orientation
of the Lefschetz singularity and the convention that a right-handed Dehn
twist gives the positively oriented peripheral generator; its class is
therefore $+e_j$, not $-e_j$.
The displayed product contains six positive primitive parabolics and is
the identity.  If the $j$th cusp occurs $n_j$ times, abelianization gives
\[
n_1e_1+\cdots+n_6e_6=0.
\]
Thus $(n_1,\ldots,n_6)$ is a multiple of $(1,\ldots,1)$.  Since
$\sum n_j=6$, every $n_j$ equals one.

The cusps of $\Gamma(4)$ are naturally indexed by
$\mathbb P^1(\mathbb Z/4)$, whose six elements may be represented by
\[
[1:0],\ [0:1],\ [1:1],\ [1:2],\ [1:3],\ [2:1].
\]
This proves the asserted residue classes, up to the natural
$SL(2,\mathbb Z)$ change of the basis $(p,q)$.  In particular no two
$v_i$ are equal up to sign in homology.

It remains to account for the choice of distinguished paths.  One may pass
between distinguished path systems by Hurwitz moves and simultaneous
conjugation.  On $K_0$, every nonseparating Picard--Lefschetz transformation
lies in $\Gamma(4)$ and is therefore the identity modulo $4$, while every
separating twist acts trivially.  More explicitly, a Hurwitz move replaces a
transported class $v_i$ by $T_{v_j}^{\pm1}(v_i)$; since
$T_{v_j}\equiv I\pmod4$ on $K_0$, its cusp label is unchanged.  Thus Hurwitz
moves preserve the cusp labels up to permutation.  A simultaneous
conjugation by $g\in\Mod(\Sigma_2)$ changes the marking and transports the
entire pair
\[
(K_0,\{v_i\})\longmapsto(g_*K_0,\{g_*v_i\}).
\]
The induced lattice isomorphism carries reduction modulo four bijectively
from $K_0/4K_0$ to $g_*K_0/4g_*K_0$.  Thus pairwise distinctness of the
cusp labels is invariant under simultaneous conjugation, although $g_*$
need not preserve the originally marked sublattice $K_0$.  A matching path
can be completed to a distinguished path system and gives two equal
transported unoriented curves, hence two equal cusp labels.  This
contradicts the six distinct labels above.
\end{proof}

\begin{remark}
The last conclusion rules out the most direct matching-sphere approach to
the Luttinger-surgery problem.  Although the six critical values occur in
three natural pairs coming from the three residual bisections, the two
vanishing cycles associated with a single bisection cannot be matching
cycles.  The pairing of ramification values on the branch curve is therefore
different from pairing equal vanishing cycles in the genus-two fiber.
\end{remark}

\section{Elliptic components and the Albanese map}
\label{sec:elliptic-components}

The seven reducible fibers contain additional geometric information that is
useful for the surgery problem.  Polizzi gives the component formulas and
intersection data in \cite[Propositions~6.11, 6.13, and~6.15 and
Corollary~6.14]{Polizzi}.  We combine those explicit curves in $E(3)$ with
the Abel--Jacobi map and \cref{thm:pi1-polizzi}.

Let
\[
\alpha:S_E\longrightarrow E
\]
be the Albanese morphism, obtained by restricting the Abel--Jacobi map
$\beta:E(3)\to E$.  Polizzi writes the seven reducible fibers as
\[
G_1=B_{23}+B_{32},\qquad
G_2=B_{13}+B_{31},\qquad
G_3=B_{12}+B_{21},
\]
and
\[
G_i^*=M_i+N_i,\qquad 1\le i\le4.
\]
All of these components are smooth elliptic curves of self-intersection
$-1$.

\begin{proposition}\label{prop:albanese-on-components}
The restrictions of the Albanese morphism to the elliptic components are
nonconstant finite maps.  After choices of origins they are translations
of isogenies.  More precisely,
\[
\deg(\alpha|_{B_{ij}})=2,\qquad
\deg(\alpha|_{M_i})=1,\qquad
\deg(\alpha|_{N_i})=3.
\]
In particular each restriction induces an injection on fundamental groups whose
image has finite index in $\pi_1(E)$, while
\[
\alpha|_{M_i}:M_i\longrightarrow E
\]
is an isomorphism.  Hence every $M_i$ is an Albanese section, horizontal
for $\alpha$ and vertical as a component of a reducible fiber of the
genus-two fibration, and
\[
\pi_1(M_i)\xrightarrow{\;\cong\;}\pi_1(S_E)\cong\mathbb Z^2.
\]
\end{proposition}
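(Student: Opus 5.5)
The plan is to compute each degree as an intersection number of the component with an Albanese fiber, using the explicit description of the components as curves in $E(3)$ from \cite[Propositions~6.11, 6.13, 6.15]{Polizzi}. The fibers of $\beta:E(3)\to E$ are the planes $\beta^{-1}(e)\cong\mathbb P^2=|e'|$, the linear systems of effective degree-three divisors with fixed sum. For a component $C\subset S_E$ we have $\deg(\alpha|_C)=C\cdot F$, where $F=\beta^*(p)$. So it suffices to count, for a general $e\in E$, how many points of $C$ are divisors of sum $e$.

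Each component is parametrized by $E$ through an explicit divisor formula, in the style of the proof of \cref{prop:bisection-quotient}.
\begin{itemize}[leftmargin=*]
\item For $M_i$: I expect Polizzi's parametrization to have the form $x\mapsto \eta_i+2\eta_i+x$, or an equivalent form in which one point moves freely. Its sum is $x$ plus a constant, so the Abel--Jacobi map is a translation and the degree is $1$.
\item For $N_i$: a parametrization of the form $x\mapsto x+(x+\eta_i)+(x+2\eta_i)$ has sum $3x$ plus a constant. Multiplication by $3$ has degree $9$. However, the parametrization $x\mapsto N_i$ is itself $3{:}1$, since translating $x$ by $\eta_i$ gives the same divisor. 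The resulting degree is therefore $9/3=3$.
\item For $B_{ij}$: analogously, I expect a parametrization whose sum is $2x$ plus a constant. The parametrization should be generically injective, or $4{:}1$ onto a map of degree $8$ if it is not. I must read it off carefully so as to get $2$.
\end{itemize}

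There is a cross-check that I will use to pin down the cases. A genus-two fiber $G$ has $G\cdot F=4$ \cite[Proposition~6.18]{Polizzi}, so for each reducible fiber the two component degrees sum to $4$. This gives $2+2=4$ for $G_1,G_2,G_3$ and $1+3=4$ for $G_i^*$. If only one component degree in a fiber can be read off cleanly, this relation determines the other. The symmetry $B_{ij}\leftrightarrow B_{ji}$, induced by the relabeling symmetry of Polizzi's construction, then forces $\deg(\alpha|_{B_{ij}})=2$.

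Nonconstancy is part of this: any component with $C\cdot F>0$ is not contained in a fiber of $\beta$. A nonconstant morphism between elliptic curves is, after choosing origins, a translation composed with an isogeny. An isogeny of degree $d$ induces an injection $\mathbb Z^2\to\mathbb Z^2$ with image of index $d$. Composing with $\pi_1(E)\cong\pi_1(S_E)$ from \cref{thm:pi1-polizzi}, where $\alpha_*$ is that isomorphism, gives the fundamental-group statements. In particular, degree $1$ makes $\alpha|_{M_i}$ an isomorphism, so $M_i$ is an Albanese section and $\pi_1(M_i)\to\pi_1(S_E)$ is an isomorphism. Verticality for the genus-two fibration holds because $M_i$ is a component of $G_i^*$.

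The main obstacle is extracting correct parametrizations from Polizzi's Section~6 and accounting for the degree of the parametrizing map, since $N_i$, and possibly $B_{ij}$, are images of $E$ under noninjective maps. Where that is delicate, I will instead use the fiber-sum constraint together with a direct degree count for $M_i$ (degree $1$) and a symmetry argument for $B_{ij}$.
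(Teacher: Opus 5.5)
Your treatment of $M_i$, $N_i$, and the fundamental-group consequences matches the paper's proof. The degrees are the intersection numbers with $F_\alpha$. The curve $M_i=\{x+\eta_i+2\eta_i\}$ has Abel--Jacobi sum $x$. The curve $N_i$ is parametrized $3{:}1$ through $E/\langle\eta_i\rangle$, so its degree is $9/3=3$. The rest follows because nonconstant maps of elliptic curves are translated isogenies, combined with $\alpha_*\circ(\iota_{M_i})_*=(\alpha|_{M_i})_*$ and \cref{thm:pi1-polizzi}. The gap is $B_{ij}$: the proposal contains no argument for it that actually works.

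\textbf{The $B_{ij}$ bullet is arithmetically inconsistent.} Suppose the Abel--Jacobi sum along a parametrization $E\to B_{ij}$ is $2x+\mathrm{const}$.
\begin{itemize}
\item The composite then has degree $4$, so a generically injective parametrization would give $\deg(\alpha|_{B_{ij}})=4$, not $2$.
\item The value $2$ forces the parametrization to be exactly $2{:}1$ onto its image.
\item A composite of degree $8$ is incompatible with a sum of the form $2x+\mathrm{const}$.
\end{itemize}

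\textbf{Your fallback does not decide the case.} The relation $\deg(\alpha|_{B_{ij}})+\deg(\alpha|_{B_{ji}})=G\cdot F_\alpha=4$ is equally consistent with the split $1+3$. That split is exactly what happens on the fibers $G_r^*=M_r+N_r$. The ``relabeling symmetry'' $B_{ij}\leftrightarrow B_{ji}$ is only a symmetry of notation unless you exhibit an automorphism of $S_E$ exchanging the two components. For general $E$, the automorphisms fixing the origin are $\pm1$, and these fix $E[2]$ pointwise. You would therefore have to check separately that some translation preserves $X_E$ and carries $B_{ij}$ to $B_{ji}$.

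\textbf{How the paper handles it.} The paper attempts none of this; it takes $F_\alpha\cdot B_{ij}=2$ directly from Polizzi's intersection data \cite[Propositions~6.11, 6.13, and~6.15 and Corollary~6.14]{Polizzi}. You should either cite that value, or extract Polizzi's parametrization of $B_{ij}$. In the latter case, divide the degree of the Abel--Jacobi composite by the degree of the parametrization onto its image, exactly as you did for $N_i$.
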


\begin{proof}
The degree statements are exactly the intersection numbers with a general
Albanese fiber recorded by Polizzi in \cite[Propositions~6.11, 6.13,
and~6.15 and Corollary~6.14]{Polizzi}:
\[
F_\alpha\cdot B_{ij}=2,\qquad F_\alpha\cdot M_i=1,\qquad F_\alpha\cdot N_i=3,
\]
where $F_\alpha$ denotes a general Albanese fiber.
They can also be seen from his explicit parametrizations.  For example,
\[
M_i=\{x+\eta_i+2\eta_i\mid x\in E\},
\]
and the Abel--Jacobi sum is $x+3\eta_i=x$, since $\eta_i$ is
$3$-torsion.  Thus $\alpha|_{M_i}$ has degree one and is an isomorphism of
elliptic curves.  More generally, every nonconstant map between elliptic
curves is, after translation, an unramified isogeny by Riemann--Hurwitz and
therefore injects fundamental groups with finite-index image.  Since
Theorem~\ref{thm:pi1-polizzi} identifies
$\alpha_*:\pi_1(S_E)\to\pi_1(E)$ with an isomorphism and
$\alpha_*\circ(\iota_{M_i})_*=(\alpha|_{M_i})_*$, the inclusion
$M_i\hookrightarrow S_E$ also induces an isomorphism on fundamental groups.
For $N_i$, its standard parametrization factors through
$E/\langle\eta_i\rangle$; the induced map to $E$ has degree $3$, in
agreement with $F_\alpha\cdot N_i=3$.
\end{proof}

Fix one of the sections $M=M_i$ and put
\[
C=S_E\setminus\operatorname{int}\nu M.
\]
Since $M^2=-1$, the boundary $\partial\nu M$ is the oriented circle bundle
over $T^2$ with Euler number $-1$.

\begin{proposition}\label{prop:section-complement}
The filling map
\[
\pi_1(C)\longrightarrow\pi_1(S_E)\cong\mathbb Z^2
\]
is surjective and its kernel is normally generated by a meridian $\mu$ of
$M$.  Moreover
\[
H_1(C;\mathbb Z)\cong\mathbb Z^2.
\]
If $x,y$ are lifts to the boundary of standard generators of $\pi_1(M)$,
then on
$\partial\nu M$ one has, up to orientation,
\[
[x,y]=\mu,
\]
and $\mu$ is central in the boundary group.
\end{proposition}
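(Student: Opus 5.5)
Write $S_E=C\cup_{\partial}\nu M$, where $\nu M$ is the $D^2$-bundle of Euler number $M^2=-1$ over $M\cong T^2$ and $\partial\nu M$ is the corresponding circle bundle. We apply van Kampen to this decomposition; the only extra input is \cref{prop:albanese-on-components}.

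\emph{Step 1: the boundary group.} The circle bundle $\partial\nu M$ has group $\langle x,y,\mu\mid \mu\ \text{central},\ [x,y]=\mu^{e}\rangle$ with $e=\pm1$. The exponent $e$ is the Euler number $-1$, and the ambiguity in its sign is the ``up to orientation'' in the statement. Hence $[x,y]=\mu^{\pm1}$ and $\mu$ is central. The fiber circle is central because the bundle is oriented, so the monodromy around the base acts trivially on the fiber.

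\emph{Step 2: surjectivity and the kernel.} Since $\nu M$ deformation retracts to $M$, we have $\pi_1(\nu M)=\pi_1(M)\cong\mathbb Z^2$. The map $\pi_1(\partial\nu M)\to\pi_1(\nu M)$ kills exactly $\mu$. Van Kampen then gives
\[
\pi_1(S_E)\cong \pi_1(C)\ast_{\pi_1(\partial\nu M)}\pi_1(M)\cong \pi_1(C)/\langle\!\langle\mu\rangle\!\rangle,
\]
because the generators $x,y$ of $\pi_1(M)$ are images of boundary classes already in $\pi_1(C)$. Thus the filling map is surjective with kernel $\langle\!\langle\mu\rangle\!\rangle$. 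Surjectivity can also be read off from \cref{prop:albanese-on-components}: the composite $\pi_1(\partial\nu M)\to\pi_1(C)\to\pi_1(S_E)$ sends $x,y$ to the generators of $\pi_1(M)\cong\pi_1(S_E)$.

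\emph{Step 3: $H_1(C)$.} This is the main point. Abelianizing Step 2 gives an exact sequence
\[
\langle[\mu]\rangle\to H_1(C)\to H_1(S_E)\cong\mathbb Z^2\to 0,
\]
so it remains to show $[\mu]=0$ in $H_1(C)$.

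The primary argument uses the commutator. The relation $\mu^{\pm1}=[x,y]$ already holds in $\pi_1(\partial\nu M)$ and therefore in $\pi_1(C)$, so $\mu$ is a commutator in $\pi_1(C)$ and $[\mu]=0$ in $H_1(C)$. Consequently $H_1(C)\cong\mathbb Z^2$.

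As a cross-check we use homology. The class $[\mu]$ is detected by the map $H_1(C)\to\mathbb Z/(M\cdot H_2(S_E))$. The fiber class $G$ of the genus-two pencil satisfies $G\cdot M=0$, since $M$ is a fiber component. However $W$-type classes give $M\cdot N_i=1$, so $M\cdot H_2(S_E)=\mathbb Z$ and $[\mu]=0$, in agreement with the commutator argument.

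The main care needed is the sign convention for the Euler number in the circle-bundle relation. That convention only affects the exponent $\pm1$, which is absorbed by the statement's ``up to orientation''.
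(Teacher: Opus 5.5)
Your proof is correct and follows the paper's argument: the same circle-bundle presentation and van Kampen pushout give surjectivity with kernel $\langle\!\langle\mu\rangle\!\rangle$, and your observation that $\mu=[x,y]^{\pm1}$ is a commutator is exactly the paper's ``circle fiber dies in abelianization'' step, which the paper packages via Mayer--Vietoris rather than by abelianizing the van Kampen quotient. In your optional cross-check the arrow should run $\mathbb Z/(M\cdot H_2(S_E))\to H_1(C)$ (from the homology sequence of the pair $(S_E,C)$, with image $\langle[\mu]\rangle$), and the cleanest dual class is $F_\alpha$ with $M\cdot F_\alpha=1$ rather than ``$W$-type'' classes, although $M_i\cdot N_i=1$ is also correct.
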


\begin{proof}
The circle bundle $\partial\nu M\to M$ has Euler number $-1$, so its
fundamental group has the standard presentation
\[
\langle x,y,\mu\mid [x,\mu]=[y,\mu]=1,\ [x,y]=\mu^{\pm1}\rangle
\]
after a choice of orientation.  The inclusion
$\pi_1(\partial\nu M)\to\pi_1(\nu M)=\pi_1(M)$ is surjective and its kernel
is normally generated by $\mu$.  Van Kampen for
\[
S_E=C\cup\nu M
\]
therefore identifies the pushout with
\[
\pi_1(S_E)\cong\pi_1(C)/\langle\!\langle i_*(\mu)\rangle\!\rangle.
\]
This proves the asserted statement about the kernel.

For first homology, the Euler number $-1$ implies
\[
H_1(\partial\nu M;\mathbb Z)\cong H_1(M;\mathbb Z)\cong\mathbb Z^2,
\]
because the circle fiber dies in the abelianization.  The map
$H_1(\partial\nu M)\to H_1(\nu M)$ is therefore an isomorphism.  The
Mayer--Vietoris sequence for the same decomposition gives
\[
H_1(C;\mathbb Z)\cong H_1(S_E;\mathbb Z)\cong\mathbb Z^2.
\]
\end{proof}

\begin{remark}\label{rem:section-complement-meaning}
Proposition~\ref{prop:section-complement} shows that the kernel of the
filling map is normally generated by the meridian.  It does not by itself
determine the group of the section complement or the relations among
conjugates of that meridian.  Nevertheless, it suggests working near the
elliptic section $M_i$, where the meridian is represented on the boundary
by the commutator of chosen boundary lifts of the two Albanese generators.
\end{remark}

\section{Necessary geometry for essential Lagrangian tori}
\label{sec:essential-tori}

The preceding section identifies two generators of $\pi_1(S_E)$ on an
elliptic Albanese section.  It does not follow that product tori near that
section are homologically essential.  The following elementary observation
rules out every construction confined to its normal neighborhood.

\begin{lemma}\label{lem:section-neighborhood-null}
Let $M=M_i$ be an elliptic component on which the Albanese map has degree
one.  Every oriented Lagrangian torus contained in $\nu M$ is
nullhomologous in $\nu M$, and hence in $S_E$.
\end{lemma}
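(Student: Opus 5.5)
The plan is a short homological argument inside the tubular neighborhood, using only that $M$ is a holomorphic curve and that a Lagrangian torus has zero symplectic area. Throughout, $\omega$ denotes the Kähler form of $S_E$ restricted to $\nu M$. This is the form with respect to which Lagrangian is understood, and for it $M$ is a complex, hence symplectic, submanifold.

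First I would compute $H_2(\nu M;\mathbb Z)$. The closed neighborhood $\nu M$ is a $D^2$-bundle over the torus $M$, so it deformation retracts onto its zero section. Hence
\[
H_2(\nu M;\mathbb Z)\cong H_2(M;\mathbb Z)\cong\mathbb Z,
\]
generated by the fundamental class $[M]$. For an oriented Lagrangian torus $T\subset\nu M$, this gives an integer $k$ with $[T]=k[M]$ in $H_2(\nu M;\mathbb Z)$.

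Second, I would evaluate the closed form $\omega$ on both sides. Since $\omega$ is closed, its integral over a closed oriented surface in $\nu M$ depends only on the homology class in $\nu M$. The Lagrangian condition $\omega|_T=0$ gives
\[
0=\int_T\omega=k\int_M\omega .
\]
Because $M$ is a compact complex curve in the Kähler surface $S_E$, its area $\int_M\omega$ is strictly positive. Therefore $k=0$ and $[T]=0$ in $H_2(\nu M;\mathbb Z)$.

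As a cross-check independent of the area argument, pairing with the zero section also detects $k$. The self-intersection $M^2=-1$ gives $T\cdot M=k\,M\cdot M=-k$, and this pairing is nondegenerate on $H_2(\nu M)$. So vanishing of $k$ is exactly vanishing of $[T]$, with no torsion issue.

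Finally, the inclusion $\nu M\hookrightarrow S_E$ induces a homomorphism on $H_2$ that sends $0$ to $0$. Hence $[T]=0$ in $H_2(S_E;\mathbb Z)$ as well.

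The only step I expect to need care is fixing the symplectic form. If "Lagrangian" were meant for some other symplectic structure on $\nu M$, the argument still applies verbatim provided $M$ is symplectic, so that $\int_M\omega\neq0$. This holds, for example, for any form deformation-equivalent to the Kähler form near $M$, or any form taming the complex structure. I would state that convention explicitly at the start of the proof. Nothing from the Albanese-degree hypothesis is actually needed beyond $M$ being a complex torus.
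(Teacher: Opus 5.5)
Your proof is correct, but it detects $k$ by a different invariant than the paper does. Both arguments write $[T]=k[M]$ in $H_2(\nu M;\mathbb Z)\cong\mathbb Z$. The paper then uses self-intersection rather than symplectic area. A compatible almost-complex structure identifies the normal bundle of the Lagrangian torus with its tangent bundle, so $[T]^2=\pm\chi(T)=0$. Since the intersection form of $\nu M$ is $[-1]$, this gives $0=-k^2$.

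Your route, $0=\int_T\omega=k\int_M\omega$ with $\int_M\omega>0$, needs the zero section to be symplectic for the same form in which $T$ is Lagrangian. In exchange it uses neither the genus of $T$ nor the value of $M^2$. It therefore shows that every closed Lagrangian surface in $\nu M$ is nullhomologous, and it would still work over a trivial normal bundle, where the paper's identity $0=k^2M^2$ gives no information.

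The paper's route is purely topological once $T^2=0$ is known. It needs $M^2\neq0$ and $\chi(T)=0$, but no information about the cohomology class of $\omega$ or the area of $M$. In fact it shows that every embedded torus of self-intersection zero in $\nu M$ is nullhomologous.

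Your remark that $T\cdot M=-k$ is only a restatement that $H_2(\nu M;\mathbb Z)$ is torsion free and detected by the pairing; it does not compute $k$ on its own. It is harmless. As you note, neither proof uses the degree-one hypothesis on the Albanese map.
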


\begin{proof}
The disk bundle $\nu M$ retracts onto $M$, and its intersection form is
the one-by-one matrix $[-1]$.  If $T\subset\nu M$ is an oriented
Lagrangian torus, then its normal bundle is isomorphic to its tangent
bundle through an almost-complex structure compatible with the symplectic
form.  Thus $T^2=0$.  Writing $[T]=k[M]$ in $H_2(\nu M;\mathbb Z)$ gives
$0=T^2=-k^2$, so $k=0$.
\end{proof}

There is also a useful restriction on the visible algebraic classes.  Let
$F_\alpha$ be a regular Albanese fiber, let $G$ be a regular genus-two
fiber, and fix an elliptic section $M=M_i$.  Their intersections and
canonical degrees are
\[
 F_\alpha^2=G^2=0,\quad F_\alpha\cdot G=4,\quad
 M^2=-1,\quad F_\alpha\cdot M=1,\quad G\cdot M=0,
\]
and, by adjunction,
\[
 K\cdot F_\alpha=4,\qquad K\cdot G=2,\qquad K\cdot M=1.
\]

\begin{proposition}\label{prop:visible-span-obstruction}
No nonzero class in
$\operatorname{span}_{\mathbb Q}\{[F_\alpha],[G],[M]\}$ can be represented
by a Lagrangian torus.
\end{proposition}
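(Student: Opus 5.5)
The plan is to use two numerical constraints satisfied by any Lagrangian torus, and then show by a direct quadratic-form computation that they have no nonzero common solution in the visible span.

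\emph{Step 1: the two constraints.} Let $T\subset S_E$ be an embedded Lagrangian torus for a symplectic form compatible with the complex structure. As in the proof of \cref{lem:section-neighborhood-null}, a compatible almost-complex structure $J$ identifies the normal bundle of $T$ with $J(TT)\cong TT$. This gives $[T]^2=e(TT)=0$. Moreover $TS_E|_T\cong TT\otimes_{\mathbb R}\mathbb C$, which is a trivial complex bundle because $T$ is parallelizable. Hence $c_1(S_E)\cdot[T]=0$, that is, $K\cdot[T]=0$. (The vanishing $\omega\cdot[T]=0$ is also available, but it will not be needed.)

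\emph{Step 2: nondegeneracy of the span.} With respect to $F_\alpha,G,M$, the displayed intersection numbers give the Gram matrix
\[
\begin{pmatrix}0&4&1\\4&0&0\\1&0&-1\end{pmatrix},
\]
whose determinant is $16\neq0$. So the three classes are linearly independent in $H_2(S_E;\mathbb Q)$, and the rational span embeds with a nondegenerate form. Consequently a nonzero class $x$ in the span is nonzero in rational homology, and it is legitimate to test it against the constraints of Step~1.

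\emph{Step 3: solving the constraints.} Write $x=aF_\alpha+bG+cM$ with $a,b,c\in\mathbb Q$. The canonical degrees give
\[
K\cdot x=4a+2b+c.
\]
The Gram matrix gives
\[
x^2=8ab+2ac-c^2.
\]
Imposing $K\cdot x=0$ forces $c=-4a-2b$. Substituting into $x^2$ yields
\[
x^2=-4\bigl(6a^2+3ab+b^2\bigr).
\]
The binary form $6a^2+3ab+b^2$ has discriminant $9-24<0$, so it is positive definite. Therefore $x^2=0$ forces $a=b=0$, and then $c=0$.

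Alternatively, one can argue by Hodge index: $K^2=3>0$, so the orthogonal complement $K^\perp$ is negative definite, and this conclusion follows at once. The explicit computation is more self-contained, so I would present that.

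\emph{Main obstacle.} The only genuinely nonformal input is the justification of $c_1\cdot[T]=0$ and $[T]^2=0$ for a Lagrangian torus. It comes from the isomorphism $TS_E|_T\cong TT\otimes\mathbb C$ given by a compatible almost-complex structure. I would state this isomorphism carefully, noting that it holds for any symplectic form deformation-equivalent to the Kähler form, since $c_1$ depends only on the deformation class. Everything after that is the arithmetic above.
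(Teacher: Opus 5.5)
Your proposal is correct and is essentially the paper's proof. The paper likewise uses the complexification $TS_E|_T\cong TT\otimes\mathbb{C}$ to get $K\cdot[T]=[T]^2=0$, eliminates $c=-4a-2b$, and concludes from positive definiteness of $6a^2+3ab+b^2$. Your Step~2 on linear independence is harmless but unnecessary, since forcing $a=b=c=0$ in any representation already shows the class is zero.
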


\begin{proof}
For an oriented Lagrangian torus $T$ with respect to the fixed K\"ahler
form on $S_E$, a compatible almost-complex
structure identifies $TS_E|_T$ with the complexification of $TT$; hence
$K\cdot[T]=0$, and also $[T]^2=0$.  Write
$[T]=aF_\alpha+bG+cM$.  The canonical pairing gives
$c=-4a-2b$.  The displayed intersection numbers then give
\[
[T]^2=8ab+2ac-c^2=-4(6a^2+3ab+b^2).
\]
The quadratic form $6a^2+3ab+b^2$ is positive definite, so $[T]^2=0$
forces $a=b=c=0$.
\end{proof}

The abstract intersection lattice does not obstruct the desired pair.  If
$T_1,T_2$ are disjoint essential tori with integral duals $D_1,D_2$, then
necessarily
\[
T_i^2=K\cdot T_i=0,\qquad T_1\cdot T_2=0,
\qquad D_i\cdot T_j=\delta_{ij}.
\]
Thus $\langle[T_1],[T_2]\rangle$ is a primitive rank-two isotropic
sublattice.  Since $\pi_1(S_E)=\mathbb Z^2$, the universal coefficient
theorem and Poincar\'e duality show that $H_2(S_E;\mathbb Z)$ is torsion
free and its intersection pairing is unimodular.  The equalities $e=9$,
$b_1=2$, and $\sigma=-5$ give $b_2=11$ and
$(b_2^+,b_2^-)=(3,8)$.  The class $M$ has $M^2=-1$, so the form is odd;
the classification of indefinite odd unimodular forms therefore gives
$I_{3,8}$.  There is consequently no rank or signature obstruction to an
isotropic pair.  The issue is its geometric realization outside the span
of the natural fibers and sections.

One possible source is the Albanese fibration
$\alpha:S_E\to E$.  The next statement records the exact geometric input
that this approach would require.  It is a criterion, not an existence
assertion.

\begin{proposition}\label{prop:suspension-criterion}
Let $a,b\subset E$ be oriented simple loops meeting once and avoiding the
critical values of $\alpha$.  Suppose the symplectic monodromy along $a$
preserves an oriented embedded curve $c$ in a regular genus-three fiber up
to an orientation-preserving parametrized isotopy, and the monodromy along
$b$ similarly preserves an oriented embedded curve $d$.  Assume that the
closing isotopies are induced by Hamiltonian isotopies of the fiber.  Their parallel-transport
traces are Lagrangian tori $T(a,c)$ and $T(b,d)$, and
\[
[T(a,c)]\cdot[T(b,d)]=(a\cdot b)(c\cdot d).
\]
If the transported curves over the base intersection admit disjoint
representatives compatible with the closing isotopies, the traces can be
separated there.  If, in addition, there are invariant curves $c^*,d^*$ for
the opposite monodromies such that
\[
c\cdot c^*=1,\qquad d\cdot d^*=1
\]
and all cross-pairings vanish geometrically, then $T(b,c^*)$ and
$T(a,d^*)$, with orientations chosen appropriately, give integral duals to
$T(a,c)$ and $T(b,d)$.  In particular, the two
Lagrangian classes are primitive and independent.
\end{proposition}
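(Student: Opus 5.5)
We will prove the criterion by constructing the traces as Lagrangian suspensions and then computing intersections in a local product model near the base point $a\cap b$. The approach has four steps.

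\emph{Step 1: build $T(a,c)$.} Equip $\alpha$ with the symplectic connection given by the $\omega$-orthogonal complements of the vertical tangent spaces. Parallel transport along $a$ is then a symplectomorphism $\phi_a$ of the regular genus-three fiber $F$ over the base point. By hypothesis $\phi_a(c)$ is carried back to $c$ by a Hamiltonian isotopy $\psi_t$ of $F$ that preserves the orientation and parametrization of $c$. First deform the connection over a small subarc of $a$, adding the horizontal lift of $dH_t$, which is the usual Hamiltonian correction of the connection. The corrected transport then maps $c$ to itself. The union of the transported copies of $c$ over $a$ is an embedded torus. It is Lagrangian because:
\begin{enumerate}[label=(\roman*)]
\item the fiberwise tangent direction $c$ is isotropic, since $\dim c=1$;
\item the horizontal direction is $\omega$-orthogonal to the vertical directions;
\item the curve $a$ is one-dimensional in the base.
\end{enumerate}
Two points need a remark. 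The modified connection agrees with a genuine $\omega$-compatible one up to a closed two-form supported near the subarc, so the trace is Lagrangian for a symplectic form deformation-equivalent to $\omega$. If exact Lagrangianity for $\omega$ itself is needed, we will use the standard Hamiltonian-suspension formula instead. The same construction along $b$ gives $T(b,d)$.

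\emph{Step 2: the intersection number.} Trivialize $\alpha$ over a small disk $U$ about $p=a\cap b$ symplectically, so that $\alpha^{-1}(U)\cong U\times F$. Outside $\alpha^{-1}(U)$ the two tori lie over disjoint base loci, so they are disjoint there. Inside, they are the products $a\cap U\times c$ and $b\cap U\times d$. For product submanifolds the local intersection number is the product of the base and fiber intersection numbers, with the product orientation convention. Here the sign $(-1)^{\dim\cdot\dim}$ equals $+1$ because all factors have dimension one. This gives $[T(a,c)]\cdot[T(b,d)]=(a\cdot b)(c\cdot d)$. If $c$ and $d$ admit disjoint representatives compatible with the closing isotopies, choose those representatives over $U$; the product model then shows the traces are geometrically disjoint.

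\emph{Step 3: the duals.} Apply the same product computation to $T(b,c^*)$ and $T(a,c)$, whose base loops are $b$ and $a$:
\[
T(a,c)\cdot T(b,c^*)=(a\cdot b)(c\cdot c^*)=\pm1,
\]
and choose the orientation of $T(b,c^*)$ to make this $+1$. In the same way $T(b,d)\cdot T(a,d^*)=(b\cdot a)(d\cdot d^*)=\pm1$, fixed to $+1$ by orientation. The off-diagonal pairings vanish for two reasons:
\begin{enumerate}[label=(\roman*)]
\item $T(a,c)$ and $T(a,d^*)$ lie over the same base loop $a$, so they can be pushed apart by a base isotopy of $a$ to a parallel loop. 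Parallel loops are disjoint, which kills the term regardless of $c\cdot d^*$. We still need the closing isotopies along the pushed loop, which follow by continuity of transport.
\item $T(b,d)$ and $T(b,c^*)$ likewise lie over $b$, so the same pushing argument applies. The pairing of $T(b,c^*)$ with $T(b,d)$ is zero for the same reason.
\end{enumerate}
The assumption that all cross-pairings vanish geometrically covers any residual fiber intersections at $p$. The intersection matrix of $\{T(a,c),T(b,d)\}$ against $\{T(b,c^*),T(a,d^*)\}$ is therefore the identity. Hence both Lagrangian classes are primitive and linearly independent in $H_2(S_E;\mathbb Z)$.

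\emph{Main obstacle.} The delicate step is orientation and sign bookkeeping in the product formula together with compatibility of the closing isotopies over $U$. The parallel pushes of Step 3 and the disjoint representatives over $p$ must both be realized inside the same Hamiltonian-corrected connection. To handle this, we will first fix all closing isotopies supported away from $U$, so that near $p$ everything is a genuine product, and then check each sign using oriented bases in a single chart.
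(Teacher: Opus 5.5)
Your proposal follows the paper's route. You close the parallel-transport trace by a Hamiltonian correction, localize all intersections over $p=a\cap b$, multiply base and fiber intersection numbers there, and apply the same computation to $T(b,c^*)$ and $T(a,d^*)$. Your treatment of the off-diagonal entries, pushing $a$ (resp.\ $b$) off itself to a disjoint parallel loop, is correct. It is slightly more explicit than the paper, and it shows that geometric vanishing of the cross-pairings matters only for disjointness, not for the homological duality.

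There is, however, a concrete sign error in Step~2. For products $A\times C$ and $B\times D$ in $M\times N$, the local intersection sign is $(-1)^{\dim C\cdot\dim B}(A\cdot B)(C\cdot D)$. Here $\dim C=\dim B=1$, so the factor is $(-1)^{1\cdot1}=-1$, not $+1$. For instance, in $T^2\times T^2$ with orientation $dx_1\wedge dy_1\wedge dx_2\wedge dy_2$, the tori $x_1\times x_2$ and $y_1\times y_2$ meet once with sign $-1$. The frame $(\partial_{x_1},\partial_{x_2},\partial_{y_1},\partial_{y_2})$ differs from the reference frame by one transposition. At a regular point of $\alpha$, the complex orientation of $S_E$ is the product of the complex orientations of the horizontal and vertical lines. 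So orienting both traces as base$\times$fiber, as you do, gives $[T(a,c)]\cdot[T(b,d)]=-(a\cdot b)(c\cdot d)$. To obtain the displayed formula you must either fix a convention that absorbs this sign (for example, orient $T(b,d)$ as fiber$\times$base) or read the formula up to this universal sign. The paper's one-line proof does not address the sign either. The error does not affect the duality, primitivity, or independence conclusions, because there you choose the orientations of $T(b,c^*)$ and $T(a,d^*)$ to make the diagonal entries $+1$.

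A smaller point concerns Step~1. ``Lagrangian for a deformation-equivalent form'' does not by itself give a torus that is Lagrangian for $\omega$. Your correction $\omega'=\omega+d\beta$ is exact, with $\beta$ supported near the subarc, so you should add a Moser step. Here $(d\beta)^2=0$, so $(\omega+s\,d\beta)^2$ is affine in $s$, and nondegeneracy of $\omega'$ gives nondegeneracy along the whole path. The resulting compactly supported isotopy then carries your torus to an $\omega$-Lagrangian one without changing anything over $U$. Alternatively, use the Hamiltonian suspension in a collar of $a$ from the start, which is what the paper does.
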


\begin{proof}
Symplectic parallel transport carries each vertical curve through a
one-parameter family of isotropic curves.  In a collar of the base loop,
attach the Hamiltonian suspension of the closing isotopy; after the usual
exact adjustment, this closes the trace to an embedded Lagrangian torus.
The same local construction may be chosen compatibly for disjoint traces.
Intersections of
two traces occur over intersections of their base loops, and the local
intersection number is the product of the base and fiber intersection
numbers.  This proves the formula.  Applying it to the proposed dual
traces gives the identity intersection matrix, which proves primitivity
and independence.
\end{proof}

The monodromy of the genus-three Albanese fibration along generators of
$\pi_1(E)$ is not determined by the intersection tables used in this
paper.  In particular, the existence of the invariant embedded curves in
Proposition~\ref{prop:suspension-criterion} remains to be established.
Homological invariance of a fiber class would not suffice: the monodromy
must preserve an embedded curve up to parametrized isotopy.
The degree-one elliptic curves $M_i$ provide marked points in the fibers
of $\alpha$ and hence lift the monodromy to a pointed mapping class group,
but this does not produce an essential invariant curve.  The boundary of
a small disk about the marked point becomes nullhomotopic after the
marking is forgotten.  Moreover, the branch calculation in
Section~\ref{sec:F2} concerns the genus-two fibration over $\mathbb P^1$,
not the Albanese fibration over $E$.  Even the action of the latter
monodromy on first homology would be insufficient, since composition with
Torelli elements can change its reduction system without changing that
action.  A curve-level analysis of the Albanese monodromy, with the aim of
constructing the essential Lagrangian tori required by
Proposition~\ref{prop:suspension-criterion}, will be undertaken in forthcoming
work.

\section{Lagrangian tori near an elliptic section}
\label{sec:direct-luttinger}

Fix a section $M=M_i$.  Over a punctured torus
\[
M^\circ=M\setminus D^2
\]
the normal disk bundle is trivial, so
\[
\nu M|_{M^\circ}\cong M^\circ\times D^2.
\]
Choose standard curves $x,y\subset M^\circ$ meeting once and representing
the two generators of $\pi_1(S_E)$.  For two distinct radii
$0<r_1<r_2<1$, the product tori
\[
T_x=x\times C_{r_1},\qquad
T_y=y\times C_{r_2}
\]
are embedded and disjoint: although $x$ and $y$ meet in the base, the two
tori lie at different radii.  By the symplectic neighborhood theorem, and
after choosing horizontal lifts over $M^\circ$, the local coupling form may
be written up to harmless normalization as
\[
\omega=\pi^*\omega_M+d(r^2\vartheta),
\]
and its restriction to either product torus vanishes: the horizontal direction
projects to a one-dimensional curve in $M$, $r$ is constant, and the other
direction is the circle fiber.

These tori are nullhomologous in $S_E$: in the chosen trivialization they
bound the solid tori $x\times D^2_{r_1}$ and $y\times D^2_{r_2}$,
respectively.  They and their surgery neighborhoods are disjoint
from the zero section $M$.  The two bounding solid tori need not be disjoint,
which is one reason the simultaneous complement calculation cannot be
replaced by two independent local calculations.  Fix the Lagrangian
framings.  Choose basing arcs from a common basepoint in the complement to
the two boundary three-tori, and let $x^L\subset\partial\nu T_x$ and
$y^L\subset\partial\nu T_y$ denote the resulting based Lagrangian push-offs.
Fix signs $\varepsilon_x,\varepsilon_y\in\{\pm1\}$.  By unit surgery along
$(T_x,x^L)$, respectively $(T_y,y^L)$, we mean gluing $T^2\times D^2$ so
that its meridian maps to
$\mu_{T_x}(x^L)^{\varepsilon_x}$, respectively
$\mu_{T_y}(y^L)^{\varepsilon_y}$, with the other generators identified by
the Lagrangian framing.  All meridians and push-offs are transported to the
common basepoint in the simultaneous complement along the stated basing
arcs; changing an arc conjugates a word and does not change its normal
closure.  Put
\[
G=\pi_1\bigl(S_E\setminus\operatorname{int}\nu(T_x\cup T_y)\bigr),
\]
and let $Y_{\varepsilon_x,\varepsilon_y}$ denote the result of these two
surgeries.  Van Kampen gives
\[
\pi_1(Y_{\varepsilon_x,\varepsilon_y})\cong
G/\langle\!\langle
\mu_{T_x}(x^L)^{\varepsilon_x},
\mu_{T_y}(y^L)^{\varepsilon_y}
\rangle\!\rangle,
\]
\cite{Luttinger,AurouxDonaldsonKatzarkov}.  Its abelianization already
precludes the desired simply connected construction.
Related applications of Luttinger surgery to the control of fundamental
groups in small symplectic four-manifolds appear in
\cite{AkhmedovZhang}.

Proposition~\ref{prop:section-complement} gives the first global constraint:
the kernel created by removing the elliptic section is normally generated
by its meridian; for chosen boundary lifts one has
\[
i_*(\mu)=[i_*(x),i_*(y)]^{\pm1}.
\]
This information alone does not determine the group obtained after drilling
$T_x\cup T_y$, since the two additional meridians and their conjugates must
also be controlled.

\begin{theorem}\label{thm:nullhomologous-surgery}
Let $Y$ be obtained from $S_E$ by orientation-preserving torus surgeries
with primitive filling slopes along the two
nullhomologous tori $T_x$ and $T_y$.  Then
\[
b_1(Y)\ge2.
\]
In particular, no choice of the two unit Luttinger surgeries above can
produce a simply connected manifold.
\end{theorem}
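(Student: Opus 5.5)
The plan is to compute $H_1$ of the complement $C'=S_E\setminus\operatorname{int}\nu(T_x\cup T_y)$ by Mayer--Vietoris, and then see what each filling can kill.

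\emph{Step 1: the complement.} Each torus $T\in\{T_x,T_y\}$ is nullhomologous. Its boundary $\partial\nu T\cong T^3$ therefore carries a meridian $\mu_T$ that is nonzero and of infinite order in $H_1(C')$. Indeed, a nullhomologous surface has a Seifert $3$-chain $V$, and the intersection pairing with $V$ evaluates to $1$ on $\mu_T$. For $T_x$ one can take $V=x\times D^2_{r_1}$ cut along the other torus; this is exactly where the non-disjointness of the two bounding solid tori has to be handled.

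Mayer--Vietoris for $S_E=C'\cup\nu(T_x\sqcup T_y)$ in degree one gives
\[
H_2(S_E)\to H_1(\partial\nu T_x)\oplus H_1(\partial\nu T_y)\to H_1(C')\oplus H_1(T_x)\oplus H_1(T_y)\to H_1(S_E)\to0 .
\]
Since both tori are nullhomologous, the image of $H_2(S_E)$ pairs trivially with the meridians. Together with the Seifert-chain duality, this shows
\[
H_1(C')\cong H_1(S_E)\oplus\mathbb Z\mu_{T_x}\oplus\mathbb Z\mu_{T_y}\cong\mathbb Z^4 ,
\]
modulo torsion. Here the push-offs $x^L,y^L$, and the remaining circle-fibre curves $C_{r_i}$, map into the $H_1(S_E)$ summand. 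In particular $x^L\mapsto x$ and $y^L\mapsto y$, the two Albanese generators, and $C_{r_i}\mapsto0$ because it bounds a meridian disk of $M$ away from $T_x,T_y$.

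\emph{Step 2: the fillings.} A surgery along $T$ with primitive slope $\gamma=a\mu_T+\lambda$, where $\lambda\in H_1(T)$, kills one class in $H_1(C')$. In the direct-sum decomposition of Step 1, this class has $\mu_T$-coefficient $a$ and an $H_1(S_E)$-component equal to the image of $\lambda$. After the two surgeries, $H_1(Y)\otimes\mathbb Q$ is $\mathbb Q^4$ modulo at most two relations, so $b_1(Y)\ge 4-2=2$.

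\emph{Main obstacle.} The delicate part is the rank-two summand spanned by the meridians in Step 1. I would verify it with the explicit Seifert solid tori, checking that $\mu_{T_x}$ and $\mu_{T_y}$ are rationally independent of each other and of the image of $H_1(S_E)$. Equivalently, one can use Alexander duality: $H_1(C',\partial)$ pairs with $H^{3}$ of the tori. A cleaner route is to show that $H_1(C')\to H_1(S_E)$ is split by the composite $C'\subset S_E\xrightarrow{\alpha}E$. The meridian classes then lie in the kernel of this composite, and duality with the two Seifert chains makes them independent there. With that, $b_1(C')=4$ and the count above gives the bound.

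For the final claim, a simply connected $Y$ would have $b_1(Y)=0$, which the bound excludes. For unit Luttinger surgeries ($a=\pm1$) one sees more directly that each relation only trades $\mu_T$ for $\mp x$ or $\mp y$. The map $\alpha_*$ therefore still surjects $H_1(Y)$ onto $\mathbb Z^2$.
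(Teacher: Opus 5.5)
Your argument is correct and is essentially the paper's proof. Your Mayer--Vietoris sequence for $S_E=C'\cup\nu(T_x\sqcup T_y)$ has vanishing connecting map from $H_2(S_E)$ because $[T_x]=[T_y]=0$, so a rank count already gives $b_1(C')=6+2-4=4$. The paper reaches the same $b_1=4$ from the same nullhomology input, packaged as the pair sequence with excision and the Thom isomorphism. So the Seifert-chain check you call the main obstacle is not needed, and your Step~2 is exactly the paper's ``one relation per filling'' count.

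There is one slip, in a closing aside the proof does not use. $\alpha_*$ does not descend to $H_1(Y)$ after a unit surgery, since $\alpha_*(\mu_{T_x}+\varepsilon_x x^L)=\varepsilon_x\alpha_*[x]\neq0$. Hence a surjection $H_1(Y)\to\mathbb Z^2$ cannot be induced by $\alpha$. The final claim follows from $b_1(Y)\ge2$ alone, as you also note.
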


\begin{proof}
Put
\[
C_T=S_E\setminus\operatorname{int}\nu(T_x\cup T_y).
\]
With rational coefficients, excision and the Thom isomorphism give
\[
H_2(S_E,C_T;\mathbb Q)\cong
H_0(T_x\sqcup T_y;\mathbb Q)\cong\mathbb Q^2,
\qquad
H_1(S_E,C_T;\mathbb Q)=0.
\]
The relevant part of the homology sequence of the pair is therefore
\[
H_2(S_E;\mathbb Q)\longrightarrow\mathbb Q^2
\longrightarrow H_1(C_T;\mathbb Q)
\longrightarrow H_1(S_E;\mathbb Q)\longrightarrow0.
\]
The first arrow records intersection with $[T_x]$ and $[T_y]$.  Both
classes vanish, so this arrow is zero.  Since
$H_1(S_E;\mathbb Q)\cong\mathbb Q^2$, we obtain
\[
0\longrightarrow
\mathbb Q\langle\mu_{T_x},\mu_{T_y}\rangle
\longrightarrow H_1(C_T;\mathbb Q)
\longrightarrow H_1(S_E;\mathbb Q)\longrightarrow0,
\]
and hence $b_1(C_T)=4$.

For the two boundary components, Mayer--Vietoris presents $H_1(Y;\mathbb Q)$
as $H_1(C_T;\mathbb Q)\oplus\mathbb Q^4$ modulo the image of
$H_1(T^3\sqcup T^3;\mathbb Q)$.  On each component two boundary classes
eliminate the two core generators of the filling, while the attaching
meridian contributes at most one additional independent relation in
$H_1(C_T;\mathbb Q)$.  Thus the two fillings impose at most two relations
on the rank-four group $H_1(C_T;\mathbb Q)$, and $b_1(Y)\ge2$.  For the
unit Luttinger fillings these relations are
\[
[\mu_{T_x}]+\varepsilon_x[x^L],\qquad
[\mu_{T_y}]+\varepsilon_y[y^L].
\]
The conclusion is independent of signs, basing arcs, and possible
cross-terms in the push-offs.
\end{proof}

For comparison, suppose that disjoint Lagrangian tori $T_1,T_2$ satisfying
the duality conditions of Proposition~\ref{prop:suspension-criterion} were
found, and put
\[
C=S_E\setminus\operatorname{int}\nu(T_1\cup T_2).
\]
The pair sequence would then identify
$H_1(C;\mathbb Z)\cong H_1(S_E;\mathbb Z)$: the intersection map
$H_2(S_E;\mathbb Z)\to\mathbb Z^2$ is surjective because of the two
duals.  If preferred Lagrangian push-offs mapped to the two Albanese
generators, the two unit surgeries could therefore kill first homology.
This still would not prove simple connectivity.  A proof would require a
presentation of $\pi_1(C)$ with the peripheral words and a verification
that
\[
\pi_1(C)/\langle\!\langle
\mu_1\lambda_1^{\varepsilon_1},
\mu_2\lambda_2^{\varepsilon_2}
\rangle\!\rangle=1.
\]
Dual surfaces make the meridians nullhomologous in the complement; unless
they provide suitably controlled punctured disks, they do not make those
meridians nullhomotopic.
More precisely, if an oriented dual surface $D_i$ of genus $g_i$ meets
$T_i$ once and is disjoint from the other torus, then the punctured surface
$D_i\setminus\operatorname{int}D^2$ gives, up to conjugacy and orientation,
\[
 \mu_i=\prod_{k=1}^{g_i}[u_{ik},v_{ik}]
 \qquad\text{in }\pi_1(C).
\]
Thus a homological dual of positive genus places the meridian in the
commutator subgroup but does not kill it.

\begin{proposition}
\label{prop:dual-sphere-criterion}
Let $T_1,T_2\subset S_E$ be disjoint Lagrangian tori.  Suppose that, for
each $i$, there is an embedded sphere $D_i\subset S_E$ meeting $T_i$
transversely once and disjoint from $T_j$ for $j\ne i$.  Put
\[
C=S_E\setminus\operatorname{int}\nu(T_1\cup T_2).
\]
Then inclusion induces an isomorphism
\[
\pi_1(C)\cong\pi_1(S_E)\cong\mathbb Z^2.
\]
Consequently, if framed Lagrangian push-offs $\lambda_1,\lambda_2$ map to
a basis of $\pi_1(S_E)$, unit Luttinger surgeries along them produce a
simply connected manifold.
\end{proposition}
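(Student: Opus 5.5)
The plan is to decompose $S_E=C\cup\nu T_1\cup\nu T_2$ and apply van Kampen twice: once to identify the kernel of $\pi_1(C)\to\pi_1(S_E)$, and once to compute the group after the unit surgeries. The dual spheres will be used to show that this kernel is trivial.

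First I would describe the kernel. Each $\nu T_i\cong T^2\times D^2$ is glued along $\partial\nu T_i\cong T^3$. The map $\pi_1(T^3)\to\pi_1(T^2\times D^2)$ is surjective, and its kernel is generated by the meridian $\mu_i=\{pt\}\times\partial D^2$. Van Kampen then gives, exactly as in the proof of Proposition~\ref{prop:section-complement},
\[
\pi_1(S_E)\cong\pi_1(C)/\langle\!\langle\mu_1,\mu_2\rangle\!\rangle,
\]
where the $\mu_i$ are transported to a common basepoint along chosen basing arcs. In particular, inclusion induces a surjection $\pi_1(C)\to\pi_1(S_E)$.

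The key step is to show that $\mu_1$ and $\mu_2$ are already trivial in $\pi_1(C)$. Since $D_i$ meets $T_i$ transversely in one point and is disjoint from $T_j$, I would shrink the tubular neighborhoods so that two conditions hold. First, $\nu T_j$ misses $D_i$. Second, $\nu T_i$ is adapted to $D_i$, so that $D_i\cap\nu T_i$ is a single normal disk fiber $\{p\}\times D^2$. The second condition is the standard fact that a tubular neighborhood can be chosen whose fiber at a transverse intersection point agrees with the transverse surface. Then $D_i\setminus\operatorname{int}(\{p\}\times D^2)$ is an embedded disk in $C$ whose boundary is a meridian of $T_i$. Hence $\mu_i=1$ in $\pi_1(C)$ for any basing arc, because changing the arc only conjugates. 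It follows that $\langle\!\langle\mu_1,\mu_2\rangle\!\rangle=1$, so inclusion is an isomorphism $\pi_1(C)\cong\pi_1(S_E)\cong\mathbb Z^2$ by Theorem~\ref{thm:pi1-polizzi}. This contrasts with the positive-genus remark preceding the statement: there $\mu_i$ is only a product of commutators in $\pi_1(C)$, whereas here it bounds an embedded disk.

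For the consequence, I would use the van Kampen formula for unit Luttinger surgery recorded earlier in this section:
\[
\pi_1(Y)\cong\pi_1(C)/\langle\!\langle\mu_1\lambda_1^{\varepsilon_1},\mu_2\lambda_2^{\varepsilon_2}\rangle\!\rangle.
\]
Since $\mu_i=1$ in $\pi_1(C)$, this quotient equals $\mathbb Z^2/\langle\lambda_1,\lambda_2\rangle$. The images of $\lambda_1,\lambda_2$ in $\mathbb Z^2$ form a basis by hypothesis, and the identification $\pi_1(C)\cong\mathbb Z^2$ is the isomorphism just proved. Because the group is abelian, basepoint and conjugation ambiguities disappear, so $\pi_1(Y)=1$.

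The only delicate point is the adapted-neighborhood step, namely arranging that the dual sphere cuts $\nu T_i$ in exactly one fiber disk and avoids $\nu T_j$. I would handle it by choosing the tubular neighborhoods after fixing $D_1$ and $D_2$, using transversality and compactness. Everything else is formal.
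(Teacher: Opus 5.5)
Your proposal is correct and follows the same route as the paper. Van Kampen identifies $\pi_1(S_E)$ with $\pi_1(C)/\langle\!\langle\mu_1,\mu_2\rangle\!\rangle$, the punctured dual spheres show that each $\mu_i$ is already trivial in $\pi_1(C)$, and the unit surgery relations then reduce to $\lambda_1^{\varepsilon_1}=\lambda_2^{\varepsilon_2}=1$, which kill a basis of $\mathbb Z^2$; your adapted-tubular-neighborhood step just makes explicit a point the paper leaves implicit.
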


\begin{proof}
Van Kampen identifies $\pi_1(S_E)$ with the quotient of $\pi_1(C)$ by the
normal closure of the two meridians.  The punctured sphere
$D_i\setminus\operatorname{int}D^2\subset C$ is a disk bounding $\mu_i$,
so both meridians are already trivial in $\pi_1(C)$.  Inclusion is
therefore an isomorphism.  The two surgery relations reduce to
$\lambda_1^{\varepsilon_1}=\lambda_2^{\varepsilon_2}=1$ and kill a basis
of $\mathbb Z^2$.
\end{proof}

The proposition is only a criterion.  No such pair of essential
Lagrangian tori with dual spheres is constructed here.

There is no additional minimality problem in this direct approach.  The
surface $S_E$ is minimal, and Ho--Li prove that Luttinger surgery preserves
symplectic minimality \cite[Proposition~3.1]{HoLi}.  Consequently, if two
unit Luttinger surgeries on homologically essential Lagrangian tori satisfy the displayed
fundamental-group condition, the resulting manifold is simply connected,
minimal, and symplectic, with
\[
e=9,\qquad \sigma=-5,
\]
since Luttinger surgery preserves Euler characteristic and signature.
Freedman's theorem \cite{Freedman} gives the homeomorphism, and the classification of
symplectic structures on rational surfaces \cite{McDuffRational} excludes
a minimal symplectic form on the standard connected sum.  Thus the result
is homeomorphic but not diffeomorphic
to $\mathbb CP^2\#6\overline{\mathbb CP}^{\,2}$.  Thus, for the direct
Polizzi construction, the remaining issue is the existence and complement
geometry of the required tori, not minimality after surgery.

\section{The \texorpdfstring{$d=3$}{d=3} Xiao block and a twisted double}
\label{sec:twisted-43-double}

We finish with a twisted-double application of the seven-fiber Xiao
fibration in the $d=3$ case.  Akhmedov and Monden studied this fibration as
a building block for small exotic four-manifolds \cite{AkhmedovMonden}, and
Akhmedov later computed its ordered branch-cover monodromy
\cite{AkhmedovXiao}.  Using that marking, we specify a gluing class for
which the doubled fibration is simply connected.  The construction is a
monodromy-level variant of the seven-blowup strategy of Akhmedov and
Monden; no diffeomorphism with their particular example is asserted.  We
also briefly recall their four-blowup application for context.

The $d=3$ surface is obtained from the double cover of
$Y=\mathbb CP^2\#7\overline{\mathbb CP}^{\,2}$ branched along the resolved
complete-quadrangle divisor.  The branch divisor consists of the strict
transforms of the six sides together with the strict transform of a nodal
quartic.  Three ramification curves are $(-1)$-sections of the induced
genus-two fibration, while three further ramification curves are vertical
$(-1)$-curves.  Contracting the latter gives Xiao's relatively minimal
model.  The pencil through a triple point cuts the branch divisor in six
points; four simple tangencies and three $3+3$ degenerations give four
nonseparating and three separating vanishing cycles, respectively
\cite{AkhmedovXiao}.

\subsection{The seven-blowup construction}

The twisted-double construction uses this type-$(4,3)$ block twice.
This section is logically independent of the $d=4$ Polizzi analysis above;
no $d=3$ input is used in any of the preceding results.  This block was
previously studied as a Lefschetz-fibration building block by Akhmedov and
Monden \cite{AkhmedovMonden}.  The type records four nonseparating and three
separating vanishing cycles.  Let
\[
f_{\mathrm{cq}}:X_{\mathrm{cq}}\longrightarrow\mathbb P^1
\]
denote this genus-two Lefschetz fibration.  Its total space is the
standard block
\[
X_{\mathrm{cq}}\cong (T^2\times S^2)\#3\overline{\mathbb CP}^{\,2},
\]
and its monodromy has four nonseparating and three separating factors
\cite{AkhmedovXiao,AkhmedovMonden}.  A regular fiber $F\cong\Sigma_2$ has
self-intersection zero, and the three $(-1)$-sections imply that its
meridian is nullhomotopic in $X_{\mathrm{cq}}\setminus\nu F$.

Fix a symplectic basis $(a_1,b_1,a_2,b_2)$ of $H_1(F;\mathbb Z)$.  In the
marking of \cite{AkhmedovXiao}, the four nonseparating vanishing-cycle
classes are
\[
-p,\qquad q,\qquad p-q,\qquad -(p+q),
\]
where
\[
p=b_1+b_2,\qquad q=2a_1+a_2.
\]
They span a primitive rank-two sublattice.  Moreover the quotient by all
seven vanishing cycles is the abelian group $\mathbb Z^2$, and its
abelianization is identified by
\[
Q:H_1(F;\mathbb Z)\longrightarrow\mathbb Z^2,\qquad
Q(x_1,x_2,x_3,x_4)=(x_1-2x_3,\ x_2-x_4).
\]

\begin{lemma}\label{lem:cq-fiber-quotient}
Let $\Pi=\pi_1(F)$ and let $N\triangleleft\Pi$ be the normal closure of
based representatives of the seven vanishing cycles of
$f_{\mathrm{cq}}$.  Then inclusion of a regular fiber induces
\[
 \Pi/N\cong\pi_1(X_{\mathrm{cq}}\setminus\nu F)
 \cong\pi_1(X_{\mathrm{cq}})\cong\mathbb Z^2.
\]
Under abelianization, this quotient is identified with $\mathbb Z^2$ by
$Q$.
\end{lemma}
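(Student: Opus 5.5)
The proof will establish the three isomorphisms from left to right, using the handle decomposition of the Lefschetz fibration and the three $(-1)$-sections.

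\textbf{First isomorphism.} As in Lemma~\ref{lem:polizzi-disk-piece}, write $X_{\mathrm{cq}}\setminus\nu F=f_{\mathrm{cq}}^{-1}(D)$. Here $D$ is a disk containing all seven critical values. The preimage is $F\times D^2$ with seven Lefschetz $2$-handles attached along the vanishing cycles. Van Kampen then gives $\pi_1(X_{\mathrm{cq}}\setminus\nu F)\cong\Pi/N$, induced by the fiber inclusion, with no clutching ambiguity since no closed-up disk is involved yet.

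\textbf{Second isomorphism.} Write $X_{\mathrm{cq}}=(X_{\mathrm{cq}}\setminus\nu F)\cup\nu F$. Here $\nu F\cong F\times D^2$, and the boundary group is $\Pi\times\langle\mu\rangle$, since $F^2=0$. Van Kampen gives $\pi_1(X_{\mathrm{cq}})\cong\pi_1(X_{\mathrm{cq}}\setminus\nu F)/\langle\!\langle\mu\rangle\!\rangle$, because the fiber generators are identified on both sides. A $(-1)$-section $\Sigma$ meets $F$ once. Removing the disk $\Sigma\cap\nu F$ leaves a disk in the complement whose boundary is $\mu$, so $\mu=1$ and the filling map is an isomorphism.

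\textbf{Third isomorphism.} It remains to show that $\Pi/N$ is abelian and isomorphic to $\mathbb Z^2$. Here I would use the standard block identification $X_{\mathrm{cq}}\cong(T^2\times S^2)\#3\overline{\mathbb CP}^{\,2}$, whose fundamental group is $\pi_1(T^2)\cong\mathbb Z^2$. The isomorphisms above then give $\Pi/N\cong\mathbb Z^2$ abstractly. For the abelianization I would check that $Q$ kills the seven vanishing classes. On $p=(0,1,0,1)$ it gives $Q(p)=(0,1-1)=0$, and on $q=(2,0,1,0)$ it gives $Q(q)=(2-2,0)=0$, so all four nonseparating classes die. The separating classes are nullhomologous. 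So $Q$ factors through $H_1(\Pi/N)=\Pi/N$. Since $Q$ is surjective (e.g.\ $Q(a_1)=(1,0)$, $Q(b_1)=(0,1)$), the induced map is a surjection $\mathbb Z^2\to\mathbb Z^2$, hence an isomorphism. As a consistency check, its kernel should be exactly the span of $p$ and $q$, matching the primitive rank-two statement.

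\textbf{Main obstacle.} The hard part is nonabelian: showing $\Pi/N$ has no extra nonabelian or finite part beyond $\mathbb Z^2$. Quoting the diffeomorphism type of $X_{\mathrm{cq}}$ settles it. If a direct argument is preferred, I would use Akhmedov's explicit based vanishing cycles. The key is to show that the separating vanishing cycles, which are commutators $[\text{genus-one half}]$, together with the nonseparating relations force commutativity of the generators. From there, the relations $p=q=1$ reduce the group to the two free abelian generators $a_1,b_1$.
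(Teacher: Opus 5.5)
Your proposal is correct and follows essentially the same route as the paper: Lefschetz $2$-handles plus van Kampen for $X_{\mathrm{cq}}\setminus\nu F$, a punctured $(-1)$-section bounding the meridian of $F$, and the quoted diffeomorphism type $(T^2\times S^2)\#3\overline{\mathbb CP}^{\,2}$ for the nonabelian identification with $\mathbb Z^2$. The only difference is in the abelianization step. You argue that $Q$ kills $p,q$ and is a surjection between groups abstractly isomorphic to $\mathbb Z^2$, hence an isomorphism; the paper instead computes $\ker Q=\langle p,q\rangle$ directly and notes that it is primitive. Either argument suffices.
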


\begin{proof}
Delete a small disk about a regular value of the fibration.  The remaining
base is a disk containing all seven critical values.  Van Kampen, using
the section to fix basing paths, presents the fundamental group of the
total space over this disk as the fiber group modulo the normal closure of
the seven vanishing cycles.  This total space is
$X_{\mathrm{cq}}\setminus\nu F$.  A punctured $(-1)$-section is a disk
whose boundary is the meridian of $F$; restoring $\nu F$ therefore changes
neither the group nor the quotient presentation.  Finally,
\[
X_{\mathrm{cq}}\cong(T^2\times S^2)\#3\overline{\mathbb CP}^{\,2},
\]
so its fundamental group is $\mathbb Z^2$; see
\cite[Sections~2.5 and~5.1]{AkhmedovMonden}.

The separating vanishing cycles have zero homology class, while the four
nonseparating classes $-p,q,p-q,-(p+q)$ span
$\langle p,q\rangle$.  Hence the abelianization of $\Pi/N$ is
$H_1(F;\mathbb Z)/\langle p,q\rangle$.  Directly from the displayed
coordinates,
\[
 \ker Q=\langle(0,1,0,1),(2,0,1,0)\rangle=\langle p,q\rangle,
\]
and this sublattice is primitive.  Thus $Q$ gives the stated
identification.
\end{proof}

Choose a nonseparating curve $\gamma$ on the second handle with
$[\gamma]=a_2+b_2$; it is disjoint from $a_1$.  Let
\[
\phi=t_{a_1}t_\gamma\in\operatorname{Mod}(\Sigma_2),
\]
with the rightmost twist acting first, and form the twisted fiber sum
\[
Z_\phi=X_{\mathrm{cq}}\#_{F,\phi}X_{\mathrm{cq}}.
\]
Our boundary convention transports the marked fiber of the second block to
the first by the orientation-reversing boundary map
\[
\Psi:F\times S^1\longrightarrow F\times S^1,
\qquad \Psi(x,e^{i\theta})=(\phi(x),e^{-i\theta}).
\]
Thus, if
$\mathcal W=t_{c_1}\cdots t_{c_7}=1$ is the closed-surface factorization of the
block, the doubled factorization is
\[
\mathcal W\mathcal W^\phi=
t_{c_1}\cdots t_{c_7}t_{\phi(c_1)}\cdots t_{\phi(c_7)}=1,
\qquad \mathcal W^\phi=\phi\mathcal W\phi^{-1}.
\]
With the convention $t_v(x)=x-(x\cdot v)v$, the action of $\phi$ on
$H_1(F;\mathbb Z)$ is
\[
\phi_*=\begin{pmatrix}
1&1&0&0\\
0&1&0&0\\
0&0&0&1\\
0&0&-1&2
\end{pmatrix}.
\]
Consequently
\[
\phi_*(p)=(1,1,1,2),\qquad
\phi_*(q)=(2,0,0,-1),
\]
and
\[
Q(\phi_*(p))=(-1,-1),\qquad
Q(\phi_*(q))=(2,1).
\]
These two vectors form an integral basis of $\mathbb Z^2$, since
\[
\det\begin{pmatrix}-1&2\\-1&1\end{pmatrix}=1.
\]

\begin{theorem}\label{thm:twisted-double-simply-connected}
The twisted double $Z_\phi$ is a simply connected minimal symplectic
four-manifold.  Its numerical invariants
are
\[
e(Z_\phi)=10,\qquad \sigma(Z_\phi)=-6,\qquad
\chi_h(Z_\phi)=1,\qquad c_1^2(Z_\phi)=2.
\]
In particular, it is homeomorphic but not diffeomorphic to
$\mathbb CP^2\#7\overline{\mathbb CP}^{\,2}$.
\end{theorem}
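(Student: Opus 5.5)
We will prove the four assertions in order: numerics, simple connectivity, minimality and symplecticity, and then the exotic conclusion.

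\emph{Numerics.} Since $Z_\phi$ is a genus-two Lefschetz fibration over $S^2$ with factorization $\mathcal W\mathcal W^\phi$, its type is $(8,6)$. \cref{prop:numerics} then gives
\[
e=8+6-4=10,\qquad \sigma=-\tfrac{24}{5}-\tfrac{6}{5}=-6,
\]
hence $\chi_h=1$ and $c_1^2=2$. As a cross-check, the fiber-sum formulas $e(Z_\phi)=2e(X_{\mathrm{cq}})+4$ and $\sigma(Z_\phi)=2\sigma(X_{\mathrm{cq}})$ agree, since $e(X_{\mathrm{cq}})=3$ and $\sigma(X_{\mathrm{cq}})=-3$.

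\emph{Simple connectivity.} Apply van Kampen to
\[
Z_\phi=(X_{\mathrm{cq}}\setminus\nu F)\cup_\Psi(X_{\mathrm{cq}}\setminus\nu F).
\]
By \cref{lem:cq-fiber-quotient}, each side has fundamental group $\Pi/N$, and this group is abelian, $\cong\mathbb Z^2$. In each side the boundary meridian circle is nullhomotopic, because it bounds a punctured $(-1)$-section. Hence $\pi_1(Z_\phi)$ is a quotient of $\Pi$ by $N$ together with $\phi_*^{-1}$ applied to the second copy of $N$, suitably based. Since the fiber generators of both sides are identified through $\phi$, the result is generated by the image of $\Pi$, and the quotient of $\Pi/N$ by the $\phi$-transported relations is still abelian. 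It is therefore determined by first homology:
\[
\mathbb Z^2\big/\bigl\langle Q(\phi_*p),\,Q(\phi_*q)\bigr\rangle .
\]
The displayed determinant computation shows that these two vectors form a basis, so this quotient is trivial. The point to check carefully is basing: both sides must use the same basepoint on a common section arc, and the meridian relation must hold on both sides. The $(-1)$-sections supply both, because the section disks in the two halves can be matched across the boundary once the sections are arranged to meet the gluing region compatibly (or, more simply, since the meridian is already killed on one side). One must also confirm that the direction of $\phi$ versus $\phi^{-1}$ does not matter. It does not: $\phi_*^{-1}\langle p,q\rangle$ gives a basis equally well, since the sublattice $\langle p,q\rangle+\phi_*\langle p,q\rangle$ spans $\mathbb Z^4$, which is symmetric under applying $\phi_*^{-1}$.

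\emph{Symplectic and minimal.} Gompf's construction gives a symplectic structure, since the genus is at least $2$ and the fibers are homologically essential. For minimality, use Usher's theorem: a fiber sum of relatively minimal Lefschetz fibrations along genus $\ge1$ fibers is minimal unless it falls into his exceptional cases, namely blowups of $S^2$-bundles or fiber sums with rational or ruled summands glued in a special way. Here $X_{\mathrm{cq}}\cong(T^2\times S^2)\#3\overline{\mathbb CP}^{\,2}$ is a blowup of a ruled surface over $T^2$. Usher's exceptional case requires $X_{\mathrm{cq}}$ to be an $S^2$-bundle blowup whose fiber is a section-type class, which is not the case for a genus-two fiber; so we expect minimality to follow. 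This application of Usher's theorem is the step I expect to be the main obstacle and will check against the precise hypotheses.

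\emph{Exotic conclusion.} Freedman's theorem, together with the odd intersection form coming from the $(-1)$-sections, shows that $Z_\phi$ is homeomorphic to $\mathbb CP^2\#7\overline{\mathbb CP}^{\,2}$, since $b_2^+=1$ and $b_2^-=7$. A minimal symplectic manifold with $b_2^+=1$ and $c_1^2=2>0$ cannot be diffeomorphic to the non-minimal rational surface, by the classification of symplectic rational surfaces cited earlier \cite{McDuffRational}.
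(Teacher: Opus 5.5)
Your proposal follows the paper's proof in structure and is correct except for the parity step. Van Kampen reduces $\pi_1(Z_\phi)$ to $\Pi/\langle\!\langle N,\phi(N)\rangle\!\rangle$. This is a quotient of the abelian group $\Pi/N\cong\mathbb Z^2$, and the unimodular pair $Q(\phi_*p),Q(\phi_*q)$ kills it. Gompf and Usher then give a minimal symplectic structure, and Freedman and McDuff give the exotic conclusion. Computing $e$ and $\sigma$ from the type $(8,6)$ via Endo's formula, rather than from the fiber-sum formulas, is an equivalent check. Your remark that $\phi$ versus $\phi^{-1}$ is immaterial, because $\langle p,q\rangle+\phi_*\langle p,q\rangle=\mathbb Z^4$, is correct.

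The oddness of the intersection form cannot be read off from the $(-1)$-sections. Each section of $X_{\mathrm{cq}}$ meets the summing fiber $F$ once, so it does not survive in $X_{\mathrm{cq}}\setminus\nu F$ as a closed surface. In $Z_\phi$ its punctured piece closes up only with a section disk from the other half, giving a sphere of square $-1+(-1)=-2$. The paper makes exactly this remark after Proposition~\ref{prop:conditional-six-blowup}. Such classes are even and say nothing about parity.

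The correct argument, which the paper uses, is arithmetic. Simple connectivity makes the form unimodular of rank $e-2=8$ and signature $-6$. An even unimodular form has signature divisible by $8$, so the form is odd, hence $I_{1,7}$. You should also note that smoothness gives vanishing Kirby--Siebenmann invariant before invoking Freedman.

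Your Usher step, which you left hedged, is the paper's argument, but state its hypotheses correctly. The exceptional case is a summand that is itself an $S^2$-bundle with the summing surface a section, not a blowup of such a bundle. $X_{\mathrm{cq}}$, with $b_2=5$, is not an $S^2$-bundle at all. The non-minimal case, a symplectic $(-1)$-sphere in $X_{\mathrm{cq}}\setminus F$, is excluded by the relative minimality of the genus-two fibration on $X_{\mathrm{cq}}$. That is exactly the input the paper invokes.
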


\begin{proof}
Let $\Pi=\pi_1(F)$ and let $N\triangleleft\Pi$ be as in
Lemma~\ref{lem:cq-fiber-quotient}.  That lemma supplies the nonabelian
identification $\Pi/N\cong\mathbb Z^2$ and identifies it by $Q$ after
abelianization.  Van Kampen, with the boundary convention above, gives the
amalgamated-product quotient
\[
\pi_1(Z_\phi)
\cong (\Pi/N)*_{\Pi}(\Pi/N)
\cong\Pi/\langle\!\langle N,\phi(N)\rangle\!\rangle.
\]
Changing basing paths only conjugates the generators of the two normal
subgroups.  After quotienting by $N$, the group is already abelian and is
identified with $\mathbb Z^2$ by $Q$.
The second block contains the vanishing cycles
$-\phi(p)$ and $\phi(q)$.  Their images under $Q$ form the unimodular basis
displayed above, so their normal closure kills the remaining group.  Hence
$\pi_1(Z_\phi)=1$.

The fiber-sum formulas give
\[
e(Z_\phi)=2e(X_{\mathrm{cq}})-2e(F)=2\cdot3-2(-2)=10,
\qquad
\sigma(Z_\phi)=2\sigma(X_{\mathrm{cq}})=-6.
\]
The remaining identities follow from
$4\chi_h=e+\sigma$ and $c_1^2=2e+3\sigma$.  Since $Z_\phi$ is simply
connected, its rank-eight intersection form is unimodular.  An even
unimodular form has signature divisible by $8$, whereas this signature is
$-6$.  The form is therefore odd and hence isomorphic to $I_{1,7}$; its
Kirby--Siebenmann invariant vanishes because $Z_\phi$ is smooth.

Choose area-preserving representatives in the gluing class.  Gompf's sum
construction makes $Z_\phi$ symplectic \cite{Gompf}.  Both summands carry
relatively minimal genus-two Lefschetz fibrations; equivalently, neither
pair $(X_{\mathrm{cq}},F)$ contains a symplectic exceptional sphere disjoint
from $F$.  This is the same relative-minimality input used for the Xiao
fiber sums in \cite[Section~5]{AkhmedovMonden}.  The exceptional ruled case
in Usher's theorem would require the genus-two summing surface to be a
section of an $S^2$-bundle, whereas such a section has the genus of the
base, here one.  Hence \cite[Theorem~1.1]{Usher} applies and $Z_\phi$ is
symplectically minimal.  Freedman's
classification \cite{Freedman} gives the asserted homeomorphism.  Finally, the
classification of symplectic structures on rational surfaces rules out a
minimal symplectic form on the standard seven-fold blowup
\cite{McDuffRational}.  A hypothetical diffeomorphism would transport the
minimal symplectic form on $Z_\phi$ to such a form, a contradiction.
\end{proof}

The mapping class $\phi$ also has a standard local surgery interpretation.
Begin with the untwisted sum
$Z_{\mathrm{id}}=X_{\mathrm{cq}}\#_F X_{\mathrm{cq}}$.  In the neck
$F\times S^1$, the disjoint curves $a_1$ and $\gamma$ determine disjoint
Lagrangian rim tori, after the standard local perturbation,
\[
R_\gamma=\gamma\times\mu_F,\qquad
R_{a_1}=a_1\times\mu_F
\]
and composition of the gluing with $t_\gamma$ and $t_{a_1}$ is represented
by unit Luttinger surgeries with slopes
$\gamma\times\{\mathrm{pt}\}$ and $a_1\times\{\mathrm{pt}\}$, respectively
\cite{AurouxDonaldsonKatzarkov}.  Coefficients are measured in the
product, equivalently Lagrangian, framings; their signs are fixed by the
chosen boundary orientation.  Since $a_1$ and $\gamma$ are disjoint, their
twists commute; equivalently, the second slope may be read after transport
through the first surgery.  This is only the local interpretation of the
gluing change.  Simple connectivity follows from the van Kampen calculation
above.

This is a monodromy-level variant of the seven-blowup strategy of Akhmedov
and Monden.  In the present formulation the gluing changes are absorbed into
$\phi=t_{a_1}t_\gamma$.  The branch calculation identifies the relevant
homology classes of the vanishing cycles, and simple connectivity reduces to
the unimodular computation
\[
 \det\bigl(Q(\phi_*(p)),Q(\phi_*(q))\bigr)=1
\]
above.  The resulting manifold has the same homeomorphism type as the
seven-blowup example in \cite{AkhmedovMonden}; no diffeomorphism with the
particular manifold constructed there is asserted.

\subsection{The four-blowup construction}

For context, Akhmedov and Monden also use the $d=3$ Xiao block, fiber-summed
with their torus-surgery block $Y_2(1,m)$, to construct an exotic
$\mathbb CP^2\#4\overline{\mathbb CP}^{\,2}$ and an infinite nonsymplectic
family \cite[Section~5.4]{AkhmedovMonden}.  The ordered $d=3$ factorization
clarifies the source of the Xiao-side vanishing-cycle relations.  We do not
repeat the nonabelian marking and gluing calculation here, and no new
four-blowup theorem is claimed.

We now isolate exactly what is required to pass from seven to six blowups.
There is also an algebro-geometric interpretation of this step when the
$(-4)$-sphere is a holomorphic rational curve in an algebraic model.
Contracting such a curve produces the Wahl singularity
$\frac{1}{4}(1,1)$, and its local $\mathbb Q$-Gorenstein smoothing has the
rational ball as Milnor fiber.  For a general symplectically embedded
$(-4)$-sphere, we use this only as a local topological analogy and do not
assert the existence of an algebraic contraction or smoothing.
Urz\'ua developed the deformation-theoretic framework for such smoothings
and their birational modifications \cite{UrzuaQG}.  More recently, Reyes and
Urz\'ua used Wahl configurations and $\mathbb Q$-Gorenstein smoothings to
construct small exotic symplectic four-manifolds and to formulate geometric
obstructions to this method \cite{ReyesUrzua}.  Our conditional lantern
approach is different in presentation: it seeks the relevant $(-4)$-sphere
inside a Lefschetz-fibration factorization.  Nevertheless, if the lantern
sphere admitted a holomorphic realization in an algebraic model and its
contraction extended to a global $\mathbb Q$-Gorenstein smoothing, then the
proposed six-blowup construction would lie in the same Wahl-smoothing
framework.

\begin{proposition}\label{prop:blowdown-pi1}
Let $S\subset Z_\phi$ be a symplectically embedded sphere with $S^2=-4$,
put $C=Z_\phi\setminus\operatorname{int}\nu S$, and let $\mu$ be the image
in $\pi_1(C)$ of a generator of
$\pi_1(\partial\nu S)=\mathbb Z/4$.  If $Z_\phi(S)$ denotes the rational
blowdown, then
\[
\pi_1(Z_\phi)=\pi_1(C)/\langle\!\langle\mu\rangle\!\rangle,
\qquad
\pi_1(Z_\phi(S))=
\pi_1(C)/\langle\!\langle\mu^2\rangle\!\rangle.
\]
Consequently $Z_\phi(S)$ is simply connected precisely when $\mu^2$
normally generates $\pi_1(C)$.  Since $Z_\phi$ is simply connected, if
$\mu$ is nullhomotopic in $C$, the first quotient formula forces
$\pi_1(C)=1$, and the blowdown is simply connected.  This applies when
$S$ has a geometric dual sphere.
\end{proposition}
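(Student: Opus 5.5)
The plan is to apply van Kampen twice, once to each decomposition of the closed manifold along the lens space $L(4,1)=\partial\nu S$, and then read off the simple-connectivity consequences.

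\textbf{Step 1: the manifold $Z_\phi$ itself.} Write $Z_\phi=C\cup_{\partial}\nu S$. The boundary $\partial\nu S$ is the circle bundle of Euler number $-4$ over $S^2$, namely $L(4,1)$. Its fundamental group $\mathbb Z/4$ is generated by a fiber circle, which is the meridian of $S$. Since $\nu S$ retracts to $S\cong S^2$, it is simply connected. Van Kampen therefore gives $\pi_1(Z_\phi)=\pi_1(C)/\langle\!\langle\mu\rangle\!\rangle$. This uses the fact that the image of $\pi_1(\partial\nu S)$ is cyclic, generated by $\mu$, so its normal closure is the normal closure of $\mu$. Changing the basing arc only conjugates $\mu$.

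\textbf{Step 2: the rational blowdown.} Write $Z_\phi(S)=C\cup_\partial B_2$, where $B_2$ is the rational ball bounded by $L(4,1)$, glued by the standard identification. I will invoke the standard fact, going back to Fintushel--Stern, that $\pi_1(B_2)\cong\mathbb Z/2$. I will also use that the inclusion $L(4,1)\to B_2$ is surjective on $\pi_1$, sending the generator to the generator of $\mathbb Z/2$. This can be read off from a handle decomposition: one $1$-handle and one $2$-handle attached along a curve representing twice the generator.

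Hence the kernel of $\pi_1(\partial)\to\pi_1(B_2)$ is generated by the square of the generator. Van Kampen gives $\pi_1(Z_\phi(S))=\pi_1(C)\ast_{\mathbb Z/4}\mathbb Z/2$. Because the map $\mathbb Z/4\to\mathbb Z/2$ is surjective, this amalgam collapses to $\pi_1(C)/\langle\!\langle\mu^2\rangle\!\rangle$. One subtlety is that the relevant generator of the boundary group for the rational ball gluing is the same fiber class $\mu$, up to a unit in $\mathbb Z/4$. A unit is $\pm1$, and $(\mu^{\pm1})^2$ has the same normal closure as $\mu^2$, so the formula is independent of this choice.

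\textbf{Step 3: the consequences.} The criterion that $Z_\phi(S)$ is simply connected exactly when $\mu^2$ normally generates $\pi_1(C)$ is immediate from the second formula.

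If $\mu$ is nullhomotopic in $C$, then the first formula reads $1=\pi_1(Z_\phi)=\pi_1(C)$, using \cref{thm:twisted-double-simply-connected}. Then trivially $\pi_1(Z_\phi(S))=1$.

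For the dual-sphere remark, suppose $\Sigma'$ is an embedded sphere meeting $S$ transversely once. Then $\Sigma'\setminus\operatorname{int}D^2\subset C$ is a disk bounding a meridian circle. This circle is based-isotopic in $\partial\nu S$ to the fiber representative $\mu$ after choosing the arc, so $\mu$ is nullhomotopic in $C$.

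\textbf{Main obstacle.} The only nonformal input is the identification of $\pi_1(B_2)$ and of the map from the boundary, together with the compatibility of the blowdown gluing with the fiber generator. I would verify these from the Kirby diagram of $B_2$. Everything else is bookkeeping with van Kampen.
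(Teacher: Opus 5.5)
Your proposal is correct and follows essentially the same route as the paper: van Kampen for $Z_\phi=C\cup\nu S$ with $\nu S$ simply connected, then van Kampen for $Z_\phi(S)=C\cup B_2$, using that $\pi_1(L(4,1))\to\pi_1(B_2)$ is the surjection $\mathbb Z/4\to\mathbb Z/2$, so only $\mu^2$ is killed, and finally the punctured dual sphere as a disk in $C$ bounding $\mu$. Your additional remarks, namely the Kirby-diagram justification of $\pi_1(B_2)\cong\mathbb Z/2$ and the independence of the index-two kernel from the choice of generator, fill in details that the paper leaves implicit.
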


\begin{proof}
The first equality is van Kampen for restoring the disk bundle of Euler
number $-4$.  If $B_2$ denotes the rational ball, the inclusion
$L(4,1)=\partial B_2\to B_2$ induces the quotient
$\mathbb Z/4\to\mathbb Z/2$ and sends $\mu$ to its generator.  This
generator is supplied by the boundary and can be eliminated in the van
Kampen pushout, leaving precisely the relation $\mu^2=1$.  This gives the
second equality.  If $S$ has a geometric dual sphere, puncturing the dual
at its intersection with $S$ gives a disk in $C$ bounded by $\mu$.
\end{proof}

\begin{proposition}
\label{prop:conditional-six-blowup}
Suppose the doubled factorization of $Z_\phi$ is Hurwitz equivalent to one
containing, as a contiguous subword, the boundary multitwist of an embedded
four-holed sphere, and suppose that the corresponding configuration
determines a symplectically embedded $(-4)$-sphere $S\subset Z_\phi$.  Assume
that replacing the four boundary twists by the three interior twists in the
lantern relation realizes the rational blowdown of $S$
\cite{EndoGurtas}, and that $S$ satisfies the
normal-generation condition of Proposition~\ref{prop:blowdown-pi1}.
Then $Z_\phi(S)$ is homeomorphic but not diffeomorphic to
$\mathbb CP^2\#6\overline{\mathbb CP}^{\,2}$.
\end{proposition}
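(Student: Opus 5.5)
The plan follows the pattern of Theorem~\ref{thm:twisted-double-simply-connected}: establish simple connectivity, compute invariants, check symplectic minimality, and then invoke Freedman and McDuff.

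\textbf{Step 1: simple connectivity.} By hypothesis $S$ satisfies the normal-generation condition of Proposition~\ref{prop:blowdown-pi1}. That proposition then gives
\[
\pi_1(Z_\phi(S))=\pi_1(C)/\langle\!\langle\mu^2\rangle\!\rangle=1 .
\]

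\textbf{Step 2: invariants.} The rational blowdown of a $(-4)$-sphere replaces a neighborhood with $b_2=1$, $\sigma=-1$, $e=2$ by the rational ball $B_2$, which has $e=1$ and $\sigma=0$. Starting from Theorem~\ref{thm:twisted-double-simply-connected}, this yields $e(Z_\phi(S))=9$ and $\sigma(Z_\phi(S))=-5$, hence $b_2=7$, $\chi_h=1$ and $c_1^2=3$. As a cross-check, the lantern substitution lowers the number of vanishing cycles by one. That changes the Euler characteristic by $-1$, consistent with the Endo--Gurtas signature change, which is where the hypothesis on \cite{EndoGurtas} enters. Since $\sigma=-5$ is not divisible by $8$, the intersection form is odd and unimodular, hence isomorphic to $I_{1,6}$. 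Smoothness makes the Kirby--Siebenmann invariant vanish, so Freedman's theorem \cite{Freedman} gives a homeomorphism to $\mathbb CP^2\#6\overline{\mathbb CP}^{\,2}$.

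\textbf{Step 3: symplectic structure and minimality.} The lantern substitution keeps the monodromy factorization positive. Alternatively, Symington's theorem on symplectic rational blowdown of symplectic $(-4)$-spheres applies directly. Either way, $Z_\phi(S)$ carries a symplectic form, here as the total space of a genus-two Lefschetz fibration.

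Minimality is the step I expect to be the main obstacle. My first approach uses the fact that symplectic rational blowdown of a $(-4)$-sphere in a minimal symplectic manifold preserves minimality; this is the Dorfmeister-type result, and Theorem~\ref{thm:twisted-double-simply-connected} shows $Z_\phi$ is minimal. If that result is unavailable, I would fall back on a direct argument. Suppose $Z_\phi(S)$ contained an exceptional sphere $e$. Then Taubes' constraint $K\cdot e=-1$, combined with $c_1^2=3>0$ and $b_2^+=1$ with a fixed chamber, would make $Z_\phi(S)$ rational or ruled, by Liu and Ohta--Ono. Its Seiberg--Witten invariant is nonzero, being inherited from $Z_\phi$ through the Fintushel--Stern blowdown formula. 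Together with $K^2=3>0$ and the non-rationality of $Z_\phi$, this should exclude the rational case.

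\textbf{Step 4: exotic.} Finally, McDuff's classification \cite{McDuffRational} shows that the standard $\mathbb CP^2\#6\overline{\mathbb CP}^{\,2}$ admits no minimal symplectic form. So a diffeomorphism would transport the minimal form on $Z_\phi(S)$ to such a form, a contradiction, exactly as in the last step of Theorem~\ref{thm:twisted-double-simply-connected}.
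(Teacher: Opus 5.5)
Your argument is correct and follows the paper's proof: the invariants change by $(e,\sigma,c_1^2)=(10,-6,2)\mapsto(9,-5,3)$, simple connectivity comes from Proposition~\ref{prop:blowdown-pi1}, the odd form $I_{1,6}$ plus Freedman give the homeomorphism, minimality comes from Dorfmeister's rational-blowdown theorem, and McDuff's classification gives exoticness; for the Dorfmeister step the paper views the blowdown as a symplectic sum with $(\mathbb CP^2,\text{conic})$ and notes that minimality of $Z_\phi$ excludes the exceptional-sphere alternatives. You should drop your fallback minimality argument, because it does not work: an exceptional sphere together with $c_1^2>0$ and $b_2^+=1$ does not force a manifold to be rational or ruled (blow up a minimal surface of general type with $p_g=0$, $K^2=4$), and nonvanishing Seiberg--Witten invariants in the Taubes chamber hold for every symplectic manifold, rational ones included.
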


\begin{proof}
Rational blowdown along the sphere $S$ changes the
invariants by
\[
(e,\sigma,c_1^2)=(10,-6,2)\longmapsto(9,-5,3).
\]
Proposition~\ref{prop:blowdown-pi1} gives simple connectivity.  The
intersection form is unimodular of rank seven and signature $-5$; it is odd
because an even unimodular form has signature divisible by eight.  Hence it
is $I_{1,6}$, and its Kirby--Siebenmann invariant vanishes because the
manifold is smooth.  Freedman's theorem \cite{Freedman} gives an orientation-preserving
homeomorphism.  The rational
blowdown is the symplectic sum of $Z_\phi$ along $S$ with $\mathbb CP^2$
along a conic.  Since $Z_\phi$ is minimal, it contains no exceptional
sphere and, in particular, no pair of disjoint exceptional spheres each
meeting $S$ once.  Thus the exceptional-sphere alternatives in
Dorfmeister's rational-blowdown theorem do not occur, and that theorem shows
that $Z_\phi(S)$ is minimal \cite[Theorem~1.2]{Dorfmeister}.  The
classification of symplectic structures on rational surfaces
\cite{McDuffRational} excludes a minimal symplectic form on the standard
connected sum.  A hypothetical diffeomorphism would transport the form on
$Z_\phi(S)$ to such a form, a contradiction.
\end{proof}

The marked homological data used above do not exhibit a lantern subword.
In a closed genus-two surface, an essential four-holed sphere whose four
boundary components are essential has Euler characteristic zero complement,
which is the disjoint union of two annuli.  Its four boundary curves occur
in two parallel pairs, so the boundary multitwist has the form
$t_a^2t_b^2$ for two disjoint
nonseparating curves $a,b$.  For the displayed doubled factorization, the
four nonseparating classes in the first block are
\[
-p,\qquad q,\qquad p-q,\qquad -(p+q),
\]
and their pairwise algebraic intersections are nonzero.  Moreover, none of
their four $\phi$-images agrees up to sign with a first-block class.
Consequently commuting or merely reordering the displayed factors cannot
produce the required repeated boundary twists.  A genuinely nontrivial
sequence of Hurwitz moves would be necessary, and the curve-level data
needed to verify such a sequence are not determined here.
Nor do the paired $(-1)$-sections provide the missing configuration: after
the fiber sum they give $(-2)$-spheres, not a $(-4)$-sphere.  Finding the
lantern sphere and verifying its meridian condition are the remaining
geometric steps in the six-blowup construction.

\section{Concluding remarks}

The local product tori near an elliptic section cannot solve the
six-blowup problem: their nullhomology prevents the two surgeries from
eliminating the rank-two first homology.  A direct construction from $S_E$
would require homologically essential Lagrangian tori whose preferred
surgery curves map onto the two Albanese generators, together with control
of their simultaneous complement.  Proposition
\ref{prop:suspension-criterion} gives a concrete formulation of this route.

The independent twisted-double construction proves the exotic
seven-blowup case.  Propositions~\ref{prop:blowdown-pi1} and
\ref{prop:conditional-six-blowup} record the precise, still conditional,
lantern criterion for passing to six blowups; no six-blowup construction is
claimed here.  The essential-torus and lantern approaches developed above
provide two concrete routes to the six-blowup problem, whose remaining
geometric aspects will be pursued in forthcoming work.

\section*{Acknowledgments}

This work was inspired by the foundational contributions of Gang Xiao and
Francesco Polizzi to the study of these fibrations and their
algebro-geometric models.  The authors are grateful to Francesco Polizzi;
his explicit treatment of the branch model provided an important starting
point for the present paper.  Akhmedov also thanks him for generous
discussions in 2020 and 2021.  The authors also thank Sai-Kee Yeung and
Giancarlo Urz\'ua, from whose work they have learned much about the
algebro-geometric aspects of these subjects.  The authors also thank
\c{C}a\u{g}r\i{} Karakurt, Selman Akbulut, Josef Dorfmeister, and Weiyi
Zhang, from whose work they have learned much about the topological and
symplectic-topological aspects of these subjects.

An LLM-based tool was used for limited assistance with grammar checking and
figure preparation.  All mathematical ideas, arguments, and results are due
to the authors.  The authors checked all mathematical computations and take full responsibility for the contents of
the paper.

\end{document}